\theoremstyle{plain}
\newtheorem{definition}{Definition}[section]
\newtheorem{Theorem}{Theorem}[section]
\newtheorem{corollary}{Corollary}[section]
\newtheorem{assumptions}{Assumption}
\newcommand{\R}{\mathbb{R}}
\newcommand{\N}{\mathbb{N}}
\newtheorem{Lemma}{Lemma}[section]
\newtheorem{Remark}{Remark}[section]
  \gdef\listctr{list\romannumeral\the\@listdepth}\expandafter
\newenvironment{AlgorithmSteps}[1][1]{%
  % uncomment one or more than one of the following lines to customize
  % the appearance of the algorithm step counters
  % \renewcommand{\thelisti}{\Roman{listi}}
  % \renewcommand{\thelistii}{\Alph{listii}}
  % \renewcommand{\thelistiii}{\arabic{listiii}}
  % \renewcommand{\thelistiv}{\arabic{listiv}}
  % \renewcommand{\thelistv}{\arabic{listv}}
  % \renewcommand{\thelistvi}{\arabic{listvi}}
  \begin{list}{\csname label\listctr\endcsname}{%
      \usecounter{\listctr}
      
      \settowidth{\labelwidth}{\textsc{Step\ #1.}}%
      \setlength{\leftmargin}{\labelwidth}\addtolength{\leftmargin}{\labelsep}}}%
  {\end{list}}
\def\x{ {\ve x}}
\def\y{ {\ve y}}
\def\xk{ \x^{(k)}}
\def\xkk{ \x^{(k+1)}}
\def\ykk{ \y^{(k+1)}}
\def\dom{\mbox{dom}}
\def\R{\mathbb R}
\def\H{\mathcal H}
\def\prox{{\mbox{prox}}}
\def\x{ {x}}\def\y{ {y}}
\def\xkk{ {\x^{(k+1)}}} \def\ykk{ {\y^{(k+1)}}}
\def\xk{ \x^{(k)} }
\def\prox{\mathrm{prox}} 
\newcommand{\argmin}{\operatornamewithlimits{argmin}}
\def\dom{\mbox{dom}}
\title{Scaled, inexact and adaptive generalized FISTA \\for strongly convex optimization}
\date{}
\author{S. Rebegoldi (Dipartimento di Ingegneria Industriale, UniFi, Italy),\\
L. Calatroni (I3S, CNRS, UCA,  Inria, France).}
\begin{document}

\maketitle

\begin{abstract}
We consider a variable metric and inexact version of the FISTA-type algorithm considered in \cite{Chambolle-Pock-2016,Calatroni-Chambolle-2019} for the minimization of the sum of two (possibly strongly) convex functions.  The proposed algorithm is combined with an adaptive (non-monotone) backtracking strategy, which allows for the adjustment of the algorithmic step-size along the iterations in order to improve the convergence speed. 
%Similarly to the analysis carried out in \cite{Chambolle-Pock-2016,Calatroni-Chambolle-2019} for GFISTA,
We prove a linear convergence result for the function values, which depends on both the strong convexity moduli of the two  functions and the upper and lower bounds on the spectrum of the variable metric operators. We validate the proposed algorithm, named Scaled Adaptive GEneralized FISTA (SAGE-FISTA), on exemplar image denoising and deblurring problems where edge-preserving Total Variation (TV)  regularization is combined with Kullback-Leibler-type fidelity terms, as it is common in applications where signal-dependent Poisson noise is assumed in the data. 
%In this note, we consider an inexact version of the scaled inertial forward-backward algorithm proposed in \cite{Bonettini-Porta-Ruggiero-2016} for the minimization of a convex functional given by the sum of a differentiable term and a nondifferentiable one, where the minimization subproblem characterizing the backward step is solved inexactly. The notion of inexactness here introduced has already been studied in \cite{Schmidt2011,Villa-etal-2013} and relies on the concept of $\epsilon-$subdifferential of a convex function. Unlike the inexact inertial forward-backward algorithm proposed in \cite{Schmidt2011}, our algorithm allows the use of variable steplengths and scaling matrices in the backward step. Similarly to \cite{Schmidt2011}, we prove an $\mathcal{O}(1/k^2)$ convergence rate result for the function values and, in addition, we also show the convergence of the iterates to a minimizer.
\end{abstract}

\textbf{Keywords}: Convex optimization, inertial forward-backward splitting, variable metric, adaptive backtracking, image restoration.

% % REQUIRED
% \begin{AMS}
% 65K05, 90C25, 90C30.
% \end{AMS}

\section{Introduction}
In recent years, inertial Forward--Backward (FB)  methods have become standard tools for solving composite convex optimization problems of the form
\begin{equation}\label{eq:main_probl}
\underset{x\in\mathcal{H}}{\min}\ F(x)\equiv f(x)+g(x),
\end{equation}
where $\mathcal{H}$ is an Hilbert space, $f,g:\mathcal{H}\rightarrow \mathbb{R}\cup \{\infty\}$ are convex functions and $f$ is continuously differentiable on a subset $Y\subseteq \mathcal{H}$. In its standard form, the FB algorithm \cite{Combettes-Wajs-2005,Combettes-Pesquet-2009} is defined as the sequential application of a (forward) gradient step on the smooth part $f$ followed by a (backward) proximal step on the non-smooth term $g$, thus resulting in the following iterative scheme defined for  all $k\geq 0$
\begin{equation}\label{eq:main_FB}
x^{(k+1)}=\operatorname{prox}_{\tau_k g}(x^{(k)}-\tau_k\nabla f(x^{(k)}))=\underset{z\in\mathcal{H}}{\operatorname{argmin}} \ g(z)+\frac{1}{2\tau_k}\|z-x^{(k)}+\tau_k\nabla f(x^{(k)})\|^2,
\end{equation}
where $\tau_k>0$ is the algorithmic step-size and $\operatorname{prox}_{\tau_k g}$ denotes the proximal operator associated to the convex function $\tau_k g$ \cite{Moreau1965}. In order to improve the convergence rate of \eqref{eq:main_FB} to a minimum point $x^*\in\argmin F$, one can consider the following inertial variant of the FB scheme \cite{Beck-Teboulle-2009b,Nesterov-1983}:
\begin{equation}\label{eq:main_inertial}
x^{(k+1)}=\operatorname{prox}_{\tau_{k+1} g}(y^{(k+1)}-\tau_{k+1}\nabla f(y^{(k+1)})), \quad \text{where } y^{(k+1)}=x^{(k)}+\beta_{k+1}(x^{(k)}-x^{(k-1)}). 
\end{equation}
Here, $y^{(k+1)}$ denotes the inertial point obtained by extrapolation of the sequence $\left\{x^{(k)}\right\}_{k\in\mathbb{N}}$ and $\beta_{k+1}$ is the inertial parameter. The idea of introducing inertia in \eqref{eq:main_FB} traces back to the pioneering work of Nesterov \cite{Nesterov-1983}, where the author proposes an inertial gradient method suited for the case $g\equiv 0$ with optimal convergence rate $\mathcal{O}(1/k^2)$, that is one order higher than the standard $\mathcal{O}(1/k)$ rate available for the method \eqref{eq:main_FB}. This approach has been extended by Nesterov in \cite{Nesterov-2004} to projected gradient descent algorithms and, later, to a general non-smooth convex function $g$ in \cite{Beck-Teboulle-2009b}, under the popular name of ``Fast  Iterative  Soft-Thresholding  Algorithm'' (FISTA). %By carefully selecting the inertial parameter, it is possible to further improve the convergence rate of FISTA from $\mathcal{O}(1/k^2)$ to $o(1/k^2)$, as explained in \cite{Attouch2017}. 
In spite of its optimal convergence rate, the practical acceleration of FISTA is often limited by the selection of the algorithmic step-sizes $\tau_k$, which, typically, are computed either as the inverse of a (pessimistic) estimate of the Lipschitz constant $L_f$ of the gradient of $f$, or through a monotone Armijo-type backtracking procedure when such an estimate is not available \cite[Section 4]{Beck-Teboulle-2009b}. Both choices usually generate a sequence of admissible step-sizes which, in case they are too small, may lead to a severe slowdown in the numerical performance of the algorithm. Possible remedies to this issue consist in either adopting an adaptive (non-monotone) backtracking procedure allowing for the local increasing of the step-size \cite{Nesterov-2013,Scheinberg-2014}, or computing the proximal--gradient operator in \eqref{eq:main_inertial} with respect to the norm induced by a suitably-chosen variable linear operator, which depends on some second-order information of the function $f$ at the current inertial point. Such variable metric strategy has been popularized in recent years and thoroughly studied in a series of works, such as, e.g., \cite{Bonettini-Loris-Porta-Prato-2015,Bonettini-Porta-Ruggiero-2016,Bonettini2018a,Combettes-Vu-2014}.    

Furthermore, if one or both functions $f,g$ in \eqref{eq:main_probl} are strongly convex, then it is possible to incorporate the strong convexity moduli inside the definition of the inertial parameters $\beta_{k+1}$ in order to obtain linear convergence rates. In the case of smooth optimization problems, this feature is well-known. In \cite[Section 2.2.1]{Nesterov-2004}, under the assumption that $f$ is $\mu_f-$strongly convex, $\nabla f$ is $L_f-$Lipschitz continuous and $g\equiv 0$, Nesterov considered the scheme \eqref{eq:main_inertial} with $\tau_{k+1}\equiv 1/L_f$ and $\beta_{k+1}\equiv (\sqrt{L_f}-\sqrt{\mu_f})/(\sqrt{L_f}+\sqrt{\mu_f})$, proving that \cite[Theorem 2.2.3]{Nesterov-2004}
$$
f(x^{(k)})-f(x^*)=\mathcal{O}\left(\left(1-\sqrt{\frac{\mu_f}{L_f}}\right)^k\right).
$$
 A first extension of Nesterov's algorithm to a general non-smooth $g$ is studied in \cite{Schmidt2011}, where strong convexity is assumed only on the smooth component $f$ . In \cite[Algorithm 5]{Chambolle-Pock-2016}, the authors consider a variant of this algorithm for problems where either one or both terms $f,g$ are strongly convex. The same algorithm is denominated Generalised FISTA (GFISTA) in \cite{Calatroni-Chambolle-2019} and equipped with an adaptive backtracking strategy allowing for local increasing of the step-size, similarly to the one proposed in \cite{Scheinberg-2014}. In \cite{Florea2019,Florea2020} similar algorithms are considered and thoroughly analyzed by means of generalized estimate sequences arguments. However, the convergence analysis carried out in \cite{Chambolle-Pock-2016,Calatroni-Chambolle-2019,Florea2019,Florea2020} does not treat explicitly the case where the proximal operator of $g$ is computed inexactly due to the lack of an exact analytical formula. This happens frequently in signal and image processing problems, especially in the presence of sparsity-inducing priors \cite{Bach-2012}. The practical performance of GFISTA may be in fact severely affected by inexact proximal evaluations, since inertial FB variants are usually more sensitive to computational errors than the standard FB scheme, see, e.g.~ \cite{Schmidt2011,Villa-etal-2013}. In this context, the adoption of a variable metric strategy in \eqref{eq:main_inertial} is expected to compensate for the proximal errors, thus favoring the recovery of the expected linear rate of GFISTA, as observed also for standard FISTA \cite{Bonettini2018a}.

\paragraph{Contribution and structure of the paper.}
In this paper, we consider an extension of the GFISTA algorithm \cite{Chambolle-Pock-2016,Calatroni-Chambolle-2019} suited for the minimization of composite (possibly strongly) convex functionals, which we name SAGE-FISTA (Scaled Adaptive GEneralized FISTA). The proposed algorithm computes the FB iterates inexactly with increasing accuracy and with respect to a sequence of suitably-chosen variable metric operators. In order to limit the computational slowdowns due to the use of possibly small step-sizes generated by Armijo-type monotone backtracking strategies, we further consider an adaptive backtracking procedure allowing for local adjustment of such parameter \cite{Scheinberg-2014,Calatroni-Chambolle-2019}. Under the assumption that one or both functions $f,g$ in \eqref{eq:main_probl} are strongly convex, we prove the linear convergence of the sequence of function values generated by SAGE-FISTA. Furthermore, we assess the numerical performance of SAGE-FISTA on some image restoration problems, showing that the combination of the scaling technique with the adaptive backtracking strategy significantly reduces the computational times and avoids misspecifications of the algorithmic step-size.

The paper is organized as follows. In Section \ref{sec:preliminaries} we set the notation and report some auxiliary technical results. In Section \ref{sec:2} we describe SAGE-FISTA in detail and provide a rigorous study of its convergence properties, showing, in particular, that linear convergence is achieved in strongly convex scenarios. Furthermore we report some useful results for the practical definition of both the inexactness parameter sequence and the variable metric operators. Finally, in Section \ref{sec:numerical}, we apply SAGE-FISTA to exemplar image restoration problems such as Poisson image denoising and deblurring. 

\section{Problem setting and preliminaries}  \label{sec:preliminaries}

In this section, we fix the notation, formulate the general optimization problem we aim to solve and prove some important preliminary results for the convergence analysis that follows.

\subsection{Notations}\label{subsec:0}
%The symbols $\R_{\geq 0}$ and $\R_{>0}$ denote the sets of nonnegative and positive real numbers, respectively. 
For a given Hilbert space $\H$, we denote by $\|\cdot\|$ the norm induced by the inner product $\langle \cdot, \cdot\rangle$ defined on $\H$. For any function $f:\mathcal{H}\rightarrow \R\cup\{\infty\}$, we denote the domain of $f$ by the set $\dom(f):=\{\x\in\H: \ f(\x)<\infty\}$. If $f:\mathcal{H}\rightarrow \mathbb{R}$ is a continuously differentiable function, then by $\mathbb{D}_f(x,y)=f(x)-f(y)-\langle \nabla f(y),x-y\rangle$ we denote the Bregman distance of $f$ between the two points $x, y\in\mathcal{H}$.\\
Let $\mathcal{S}(\H)$ be the set of linear, bounded and self-adjoint operators from $\H$ to $\H$, and $\mathcal{I}\in \mathcal{S}(\mathcal{H})$ the identity operator on $\mathcal{H}$. The Loewner partial ordering relation on $\mathcal{S}(\H)$ is defined as follows:
$$
\forall D_1, D_2 \in \mathcal{S}(\H) \quad D_1\preceq D_2 \Leftrightarrow \langle D_1 x, x \rangle \leq \langle D_2 x, x \rangle  \ \forall x \in \H \, .
$$
For any $\eta_{inf},\eta_{sup}$ with $0<\eta_{inf}\leq \eta_{sup}$, we introduce the following sets
\begin{align*}
\mathcal{D}_{\eta_{inf}}:=\{D\in \mathcal{S}(\H): \ \eta_{inf} \mathcal{I} \preceq D \},\quad
\mathcal{D}_{\eta_{inf}}^{\eta_{sup}}:=\{D\in \mathcal{S}(\H): \ \eta_{inf} \mathcal{I} \preceq D \preceq \eta_{sup} \mathcal{I} \}.
\end{align*}
By definition, it follows that $\mathcal{D}_{\eta_{inf}}^{\eta_{sup}}\subseteq \mathcal{D}_{\eta_{inf}}$. Furthermore, following \cite[Theorem 4.6.11]{Debnath-etal-1990}, we have that if $D\in\mathcal{D}_{\eta_{inf}}$, then $D$ is invertible and, if also $D\in\mathcal{D}_{\eta_{inf}}^{\eta_{sup}}$, then we have $D^{-1}\in\mathcal{D}_{1/\eta_{sup}}^{1/\eta_{inf}}$. If $D\in \mathcal{D}_{\eta_{inf}}$, then
\begin{equation}\label{eq:inner_productD}
(x,y):=\langle Dx, y \rangle,
\end{equation}
defines an inner product on $\mathcal{H}$, and $\|x\|_D=\sqrt{\langle Dx, x\rangle}$ denotes the $D-$norm induced by \eqref{eq:inner_productD}. Hence, if $D\in \mathcal{D}_{\eta_{inf}}^{\eta_{sup}}$, the following inequality holds
\begin{equation}\label{ine_norm}
\eta_{inf} \| x\|^2 \leq  \|x\|_D^2 \leq \eta_{sup} \| x \|^2, \quad \forall \ x\in\H.
\end{equation}
Given $Y\subseteq \mathcal{H}$ nonempty, closed convex set and $D\in \mathcal{D}_{\eta_{inf}}$, we define the projection operator onto $Y$ in the metric induced by $D$ as
$$
P_{Y,D}(x)=\underset{z\in Y}{\argmin} \ \|z-x\|^2_D, \quad \forall \ x\in\mathcal{H}.
$$

\subsection{Problem formulation and preliminary results} \label{subsec:2}

We are interested in solving the optimization problem
\begin{equation}\label{minf}
{\min_{\x\in\H}} \ F(\x) \equiv f(\x) + g(\x) \, ,
\end{equation}
where
\begin{itemize}
%\item $\H$ is a Hilbert space;
\item $f:\H\rightarrow \R$ is $\mu_f-$strongly convex ($\mu_f\geq 0$) and continuously differentiable with $L_f$~-Lipschitz continuous gradient on a nonempty, closed convex set $Y$ such that $\operatorname{dom}(g)\subseteq Y\subseteq \operatorname{dom}(f)$;
\item $g:\H\rightarrow \R\cup\{\infty\}$ is proper, $\mu_g-$strongly convex ($\mu_g\geq 0$), and lower semicontinuous.
\end{itemize}
We now prove a technical descent inequality that is crucial for the convergence analysis carried out in Section \ref{sec:2}. We start noting that, under the assumption that $f$ is $\mu_f-$strongly convex, then $f$ 
is also strongly convex with respect to the norm induced by any operator $D\in \mathcal{D}_{\eta_{inf}}^{\eta_{sup}}$.
\begin{Lemma}\label{lemmastronglyD}
Let $f:\mathcal{H}\rightarrow \R\cup\{\infty\}$ be a $\mu_f$-strongly convex function on a nonempty convex set $Y\subseteq \operatorname{dom}(f)$. For all $D\in \mathcal{D}_{\eta_{inf}}^{\eta_{sup}}$ and $\mu_{f,D}\leq \frac{\mu_f}{\eta_{sup}}$, we have that $f$ is $\mu_{f,D}-$strongly convex with respect to the $D-$norm, that is  
%\begin{equation}\label{strongly-convexD}
%f(\alpha x+(1-\alpha) y)\leq \alpha f(x)+(1-\alpha)f(y) -\frac{\alpha (1-\alpha)\mu_{f,D}}{2} \|x-y\|_D^2, \quad \forall \ x,y\in \operatorname{dom}(f), \ \alpha\in[0,1]
%\end{equation}
%or equivalently
\begin{equation}\label{strongly-convexD-bis}
f(y)\geq  f(x)+\langle w, y-x\rangle+\frac{\mu_{f,D}}{2}\|y-x\|_D^2, \quad \forall \ y,x\in Y, \ w\in\partial f(x).
\end{equation}
\end{Lemma}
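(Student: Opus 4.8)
The plan is to reduce the claim to the standard subgradient characterization of strong convexity in the ambient norm and then absorb the change of metric through the spectral bound \eqref{ine_norm}. First I would recall that, since $f$ is $\mu_f$-strongly convex on the convex set $Y$, for every $x\in Y$ with $\partial f(x)\neq\emptyset$ and every $w\in\partial f(x)$ one has
\begin{equation*}
f(y) \ge f(x) + \langle w, y-x\rangle + \frac{\mu_f}{2}\|y-x\|^2, \qquad \forall\, y\in Y,
\end{equation*}
which is the well-known equivalent formulation of $\mu_f$-strong convexity for functions admitting subgradients on $Y$ (equivalently, the convexity of $x\mapsto f(x)-\frac{\mu_f}{2}\|x\|^2$ on $Y$).

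Next I would invoke the upper estimate in \eqref{ine_norm}, namely $\|z\|_D^2 \le \eta_{sup}\|z\|^2$ for all $z\in\H$, applied to $z=y-x$, which gives $\|y-x\|^2 \ge \eta_{sup}^{-1}\|y-x\|_D^2$. Combining this with the hypothesis $\mu_{f,D}\le \mu_f/\eta_{sup}$ and the nonnegativity of $\|y-x\|_D^2$ yields
\begin{equation*}
\frac{\mu_f}{2}\|y-x\|^2 \ge \frac{\mu_f}{2\eta_{sup}}\|y-x\|_D^2 \ge \frac{\mu_{f,D}}{2}\|y-x\|_D^2.
\end{equation*}
Substituting this lower bound into the Euclidean strong convexity inequality above gives exactly \eqref{strongly-convexD-bis}, which completes the argument.

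I do not expect any genuine obstacle here: the statement is essentially a one-line consequence of the norm equivalence encoded in the definition of $\mathcal{D}_{\eta_{inf}}^{\eta_{sup}}$, and only the upper bound $\eta_{sup}$ (not the lower bound $\eta_{inf}$) enters. The only point deserving a moment of care is the passage from the Jensen-type definition of strong convexity to the subgradient inequality, which I would either cite as standard or derive by rearranging ordinary convexity of $f-\frac{\mu_f}{2}\|\cdot\|^2$. It is also worth remarking that the constant $\mu_f/\eta_{sup}$ is sharp for this line of reasoning, since the right-hand inequality in \eqref{ine_norm} is attained in general.
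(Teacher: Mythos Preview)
Your argument is correct and follows exactly the same route as the paper's own proof: start from the subgradient inequality for $\mu_f$-strong convexity in the ambient norm, then use the upper bound in \eqref{ine_norm} together with $\mu_{f,D}\le \mu_f/\eta_{sup}$ to pass to the $D$-norm. The additional remarks on sharpness and on only the upper bound being needed are accurate but not in the paper.
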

\begin{proof} In view of the assumption on $\mu_{f,D}$, the $\mu_f$-strong convexity of $f$ and \eqref{ine_norm}, we have
\begin{align*}
f(y)& \geq f(x)+\langle w, y-x\rangle +\frac{\mu_f}{2\eta_{sup}}\eta_{sup}\|y-x\|^2\\
&\geq f(x)+\langle w, y-x\rangle +\frac{\mu_{f,D}}{2}\|y-x\|_D^2, \quad \forall \ y,x\in Y, \ w\in\partial f(x).
\end{align*}
%which coincides with \eqref{strongly-convexD-bis}.
\end{proof}
%In this case, the function $f$ is said $\mu_{f,D}$-strongly convex with respect to the metric induced by the operator $D\in \mathcal{D}_{\eta_{inf}}^{\eta_{sup}}$.\\
Furthermore we report the statement of a scaled version of the classical descent lemma whose proof can be found in \cite{Bonettini-Porta-Ruggiero-2016}.
\begin{Lemma}\label{lemmadescent}
\cite[Lemma 6]{Bonettini-Porta-Ruggiero-2016} Let $f:\mathcal{H}\rightarrow \R\cup\{\infty\}$ be a continuously differentiable function with $L_f$-Lipschitz continuous gradient on $Y\subseteq \operatorname{dom}(f)$ and $D\in \mathcal{D}_{\eta_{inf}}$. If $0<\tau\leq \frac{\eta_{inf}}{L_f}$, we have
\begin{equation}\label{descent}
f(y)\leq f(x)+<\nabla f(x), y-x> +\frac{1}{2\tau} \|x-y\|_D^2 \quad \forall \ x,y\in Y.
\end{equation}
\end{Lemma}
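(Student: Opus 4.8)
The plan is to derive the scaled inequality \eqref{descent} from the classical (Euclidean) descent lemma together with the lower spectral bound encoded in $D\in\mathcal{D}_{\eta_{inf}}$.

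First I would establish the unscaled descent inequality on the convex set $Y$: for any $x,y\in Y$ the segment $\{x+t(y-x):t\in[0,1]\}$ lies in $Y$ by convexity, so the fundamental theorem of calculus gives $f(y)-f(x)=\int_0^1\langle\nabla f(x+t(y-x)),y-x\rangle\,dt$. Subtracting $\langle\nabla f(x),y-x\rangle$ and applying Cauchy--Schwarz together with the $L_f$-Lipschitz continuity of $\nabla f$ on $Y$ yields
$$f(y)-f(x)-\langle\nabla f(x),y-x\rangle\leq\int_0^1\|\nabla f(x+t(y-x))-\nabla f(x)\|\,\|y-x\|\,dt\leq\int_0^1 L_f t\,\|y-x\|^2\,dt=\frac{L_f}{2}\|x-y\|^2.$$

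Next I would pass to the $D$-norm. Since $D\in\mathcal{D}_{\eta_{inf}}$ means $\eta_{inf}\mathcal{I}\preceq D$, the definition of the Loewner ordering gives $\eta_{inf}\|x-y\|^2\leq\langle D(x-y),x-y\rangle=\|x-y\|_D^2$, hence $\|x-y\|^2\leq\frac{1}{\eta_{inf}}\|x-y\|_D^2$. Combining this with the previous display gives $f(y)\leq f(x)+\langle\nabla f(x),y-x\rangle+\frac{L_f}{2\eta_{inf}}\|x-y\|_D^2$. Finally, the step-size condition $0<\tau\leq\eta_{inf}/L_f$ is equivalent to $\frac{L_f}{\eta_{inf}}\leq\frac{1}{\tau}$, so $\frac{L_f}{2\eta_{inf}}\leq\frac{1}{2\tau}$ and \eqref{descent} follows.

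I do not anticipate a genuine obstacle here: the statement is a one-line consequence of the classical descent lemma once the lower bound $\eta_{inf}\|\cdot\|^2\leq\|\cdot\|_D^2$ is invoked. The only points requiring a little care are that the integral form of the mean value theorem is applied along a segment which must remain inside $Y$ (guaranteed by convexity of $Y$ and $x,y\in Y$), and that one should use the lower spectral bound valid on all of $\mathcal{D}_{\eta_{inf}}$ rather than the two-sided bound \eqref{ine_norm}, whose second inequality would need the stronger hypothesis $D\in\mathcal{D}_{\eta_{inf}}^{\eta_{sup}}$ not assumed here.
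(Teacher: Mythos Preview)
Your argument is correct. Note, however, that the paper does not actually give a proof of this lemma: it merely states the result and refers to \cite[Lemma~6]{Bonettini-Porta-Ruggiero-2016} for the proof, so there is no in-paper argument to compare against. Your derivation---classical descent lemma via the integral mean-value representation, followed by the spectral lower bound $\eta_{inf}\|\cdot\|^2\leq\|\cdot\|_D^2$ and the step-size condition $\tau\leq\eta_{inf}/L_f$---is the standard route and is almost certainly what the cited reference does as well. Your remark that convexity of $Y$ is needed for the segment to stay in $Y$ is apt: the lemma as stated in the paper does not make this explicit, but the ambient assumptions in Section~\ref{subsec:2} do require $Y$ to be closed and convex, so the argument goes through in the paper's setting.
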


We now introduce the notion of proximal--gradient point, which can be computed by alternating a (forward) gradient step performed w.r.t.~ the differentiable function $f$ with a (backward) proximal step on the non-smooth term $g$. To this aim, for any given $\bar{x}\in Y$, $\tau>0$ and $D\in \mathcal{D}_{\eta_{inf}}^{\eta_{sup}}$, we define the function $h_{\tau,D}(\cdot;\bar{x}):\H\rightarrow \R\cup\{\infty\}$ as
\begin{equation}\label{eq:function_h}
h_{\tau,D}(z;\bar{x}):=f(\bar{x})+ \langle\nabla f(\bar{x}),z-\bar{x}\rangle+\frac{1}{2\tau}\|z-\bar{x}\|_D^2+g(z), \quad \forall \ z\in\H.
\end{equation}
Since $g$ is strongly convex w.r.t. the $D$-norm with modulus $\mu_{g,D}\leq \mu_g/\eta_{sup}$ (see Lemma \ref{lemmastronglyD}), it is easy to show that $h_{\tau,D}(\cdot;\bar{x})$ is also strongly convex w.r.t. the $D$-norm with modulus $\frac{1}{\tau}+\mu_{g,D}$. Consequently, $h_{\tau,D}(\cdot;\bar{x})$ has a unique minimizer.
%\begin{Remark}
%, for all $z,y\in\operatorname{dom}(g)$, $w\in\partial g(y)$, we have
%\begin{align*}
%h_{\tau,D}(z;\bar{x})& = f(\bar{x})+ \langle\nabla %f(\bar{x}),z-\bar{x}\rangle+\frac{1}{2\tau}\|z-\bar{x}\|_D^2+g(z)\\
%& \geq f(\bar{x})+ \langle\nabla %f(\bar{x}),z-\bar{x}\rangle+\frac{1}{2\tau}\|z-\bar{x}\|_D^2+g(y)+\langle w, z-y\rangle+\frac{\mu_{g,D}}{2}\|z-y\|_D^2.
%\end{align*}
%By applying the simple three-point equality $\|a-c\|_D^2=\|a-b\|_D^2+\|b-c\|_D^2+2\langle D(a-b),b-c\rangle$ to the previous inequality, we can write
%\begin{align*}
%h_{\tau,D}(z;\bar{x})& \geq  f(\bar{x})+ \langle\nabla f(\bar{x}),z-\bar{x}\rangle+\frac{1}{2\tau}\|z-y\|_D^2+\frac{1}{2\tau}\|y-\bar{x}\|_D^2+\frac{1}{\tau}\langle D(y-\bar{x}),z-y\rangle\\
%&+g(y)+\langle w, z-y\rangle+\frac{\mu_{g,D}}{2}\|z-y\|_D^2\\
%& = f(\bar{x})+ \langle\nabla f(\bar{x}),y-\bar{x}\rangle+\frac{1}{2\tau}\|y-\bar{x}\|_D^2+g(y)+\langle \nabla f(\bar{x})+\frac{1}{\tau}D(y-\bar{x})+w,z-y\rangle\\
%&+\frac{1}{2}\left(\frac{1}{\tau}+\mu_{g,D}\right)\|z-y\|_D^2 \\
%&=h_{\tau,D}(y;\bar{x})+\langle v, z-y\rangle + \frac{1}{2}\left(\frac{1}{\tau}+\mu_{g,D}\right)\|z-y\|_D^2, \quad \forall \ v\in\partial h_{\tau,D}(y;\bar{x}),
%\end{align*}
%which means that $h_{\tau,D}(\cdot;\bar{x})$ is $(1/\tau+\mu_{g,D})-$strongly convex with respect to the $D-$norm. 
%\end{Remark}

\begin{definition}
Given $\tau>0$ and $D\in \mathcal{D}_{\eta_{inf}}^{\eta_{sup}}$, we define the proximal operator associated to $g$ in the metric induced by $D$ as
$$
\operatorname{prox}_{g}^D(x)=\underset{z\in\mathcal{H}}{\operatorname{argmin}} \ g(z)+\frac{1}{2}\|z-x\|_D^2, \quad \forall \ x\in\mathcal{H}.
$$
For any $\bar{x}\in Y$, the proximal--gradient point $\hat{x}$ with parameters $\tau$ and $D$ is thus given by
\begin{equation}\label{eq:prox-grad}
\hat{x}:=\prox_{\tau g}^D(\bar{x}-\tau D^{-1}\nabla f(\bar{x}))=\underset{z\in\H}{\operatorname{argmin}} \ h_{\tau,D}(z;\bar{x}).
\end{equation}
\end{definition}

For our purposes, it will be convenient to consider a suitable approximation of the point $\hat{x}$ in \eqref{eq:prox-grad}  computed by means of a fixed positive tolerance parameter.
\begin{definition}  \label{def:eps_approx}
Given $\bar{x}\in Y$, $\tau>0$, $D\in \mathcal{D}_{\eta_{inf}}^{\eta_{sup}}$ and $\epsilon\geq 0$, a point $\tilde{x}\in\dom(g)$ is called an $\epsilon-$approximation of the proximal--gradient point $\hat{x}$ if the following condition holds
\begin{equation}\label{eq:eps_approx}
h_{\tau,D}(\tilde{x};\bar{x})-h_{\tau,D}(\hat{x};\bar{x})\leq \epsilon.
\end{equation}
In this case, we use the notation $\tilde{x}\approx_\epsilon \hat{x}$.
\end{definition}
We are now ready to state and prove the promised generalized descent inequality, adapted to the scaled and inexact framework introduced above.

\begin{Lemma}\label{lem:technical}
Given a point $\bar{x}\in Y$, $\tau>0$, $D\in \mathcal{D}_{\eta_{inf}}^{\eta_{sup}}$, $\epsilon\geq 0$ and a point $\tilde{x}\approx_\epsilon \hat{x}$, the following inequality holds
\begin{align}\label{eq:ine_fund}
F(\tilde{x})& +(1+\tau\mu_{g,D})\frac{\|x-\tilde{x}\|_D^2}{2\tau}+\left(\frac{\|\tilde{x}-\bar{x}\|_D^2}{2\tau}-\mathbb{D}_f(\tilde{x},\bar{x})\right) \nonumber\\ 
& \leq F(x)+(1-\tau\mu_{f,D})\frac{\|x-\bar{x}\|_D^2}{2\tau}
+\epsilon+\frac{\sqrt{2\epsilon\tau(1+\tau\mu_{g,D})}}{\tau}\|x-\tilde{x}\|_D,\quad \forall \ x\in \mathcal{H},
\end{align}
where $\mu_{g,D}\leq \frac{\mu_g}{\eta_{sup}}$ and $\mu_{f,D}\leq \frac{\mu_f}{\eta_{sup}}$.
\end{Lemma}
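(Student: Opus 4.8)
The plan is to use the strong convexity of the model function $h_{\tau,D}(\cdot;\bar{x})$ around its unique minimizer $\hat{x}$, transport the resulting quadratic lower bound from $\hat{x}$ to the inexact point $\tilde{x}$ by means of the $\epsilon$-approximation condition, and then rewrite both sides in terms of $F$ via the Bregman distance identity and the $D$-metric strong convexity of $f$ from Lemma~\ref{lemmastronglyD}. Throughout one may assume $x\in\dom(g)\subseteq Y$, since otherwise $F(x)=+\infty$ and the inequality is trivial.

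As recalled just before Definition~\ref{def:eps_approx}, $h_{\tau,D}(\cdot;\bar{x})$ is strongly convex with respect to the $D$-norm with modulus $\tfrac1\tau+\mu_{g,D}$, and $\hat{x}$ is its minimizer. A standard property of strongly convex functions then gives
$$
h_{\tau,D}(z;\bar{x})\ \geq\ h_{\tau,D}(\hat{x};\bar{x})+\tfrac12\big(\tfrac1\tau+\mu_{g,D}\big)\|z-\hat{x}\|_D^2,\qquad\forall\,z\in\H.
$$
Taking $z=\tilde{x}$ and combining with $h_{\tau,D}(\tilde{x};\bar{x})-h_{\tau,D}(\hat{x};\bar{x})\leq\epsilon$ yields the a priori estimate $\|\tilde{x}-\hat{x}\|_D\leq\sqrt{2\epsilon\tau/(1+\tau\mu_{g,D})}$.

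Next, applying the same quadratic lower bound at $z=x$, replacing $h_{\tau,D}(\hat{x};\bar{x})$ by $h_{\tau,D}(\tilde{x};\bar{x})-\epsilon$, and expanding $\|x-\hat{x}\|_D^2=\|x-\tilde{x}\|_D^2+2\langle D(x-\tilde{x}),\tilde{x}-\hat{x}\rangle+\|\tilde{x}-\hat{x}\|_D^2$ — discarding the last nonnegative term and bounding the cross term by Cauchy--Schwarz in the $D$-inner product together with the a priori estimate above — I would obtain
$$
h_{\tau,D}(\tilde{x};\bar{x})+\tfrac{1+\tau\mu_{g,D}}{2\tau}\|x-\tilde{x}\|_D^2\ \leq\ h_{\tau,D}(x;\bar{x})+\epsilon+\tfrac{\sqrt{2\epsilon\tau(1+\tau\mu_{g,D})}}{\tau}\|x-\tilde{x}\|_D ,
$$
the coefficient of the $\sqrt{\epsilon}$ term arising after simplifying $\big(\tfrac1\tau+\mu_{g,D}\big)\sqrt{2\epsilon\tau/(1+\tau\mu_{g,D})}$.

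Finally I would unpack the two occurrences of $h_{\tau,D}$. On the left, the definition of the Bregman distance gives $f(\bar{x})+\langle\nabla f(\bar{x}),\tilde{x}-\bar{x}\rangle=f(\tilde{x})-\mathbb{D}_f(\tilde{x},\bar{x})$ (meaningful since $\tilde{x}\in\dom(g)\subseteq Y$), hence $h_{\tau,D}(\tilde{x};\bar{x})=F(\tilde{x})+\tfrac{\|\tilde{x}-\bar{x}\|_D^2}{2\tau}-\mathbb{D}_f(\tilde{x},\bar{x})$. On the right, the $D$-metric strong convexity of $f$ (Lemma~\ref{lemmastronglyD}, valid because $x,\bar{x}\in Y$) gives $f(\bar{x})+\langle\nabla f(\bar{x}),x-\bar{x}\rangle\leq f(x)-\tfrac{\mu_{f,D}}{2}\|x-\bar{x}\|_D^2$, so $h_{\tau,D}(x;\bar{x})\leq F(x)+\tfrac{1-\tau\mu_{f,D}}{2\tau}\|x-\bar{x}\|_D^2$. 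Substituting both into the displayed inequality produces exactly \eqref{eq:ine_fund}. The only genuinely delicate point is the bookkeeping in the middle step: correctly isolating $\|x-\tilde{x}\|_D$ in the cross term and checking that the two square-root factors combine into $\sqrt{2\epsilon\tau(1+\tau\mu_{g,D})}/\tau$; everything else is a direct application of the two lemmas and of the definition of $\approx_\epsilon$.
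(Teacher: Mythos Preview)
Your proof is correct and follows essentially the same approach as the paper: exploit the $\big(\tfrac1\tau+\mu_{g,D}\big)$-strong convexity of $h_{\tau,D}(\cdot;\bar x)$ at its minimizer $\hat x$, derive the a priori bound on $\|\tilde x-\hat x\|_D$ from the $\epsilon$-approximation condition, transport the quadratic lower bound to $\tilde x$ via the three-point identity and Cauchy--Schwarz, and then rewrite both sides using the Bregman identity for $f$ at $\tilde x$ and Lemma~\ref{lemmastronglyD} at $x$. The paper's proof is organized identically, with the same intermediate inequality \eqref{eq:key} and the same final substitution.
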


\begin{proof}
Let $\mu_{h,D}=\frac{1}{\tau}+\mu_{g,D}$ be the strong convexity modulus of $h_{\tau,D}(\cdot;\bar{x})$ with respect to the $D-$norm. Combining the strong convexity of $h_{\tau,D}(\cdot;\bar{x})$ with the optimality of the proximal--gradient point $\hat{x}$, we obtain
\begin{equation}\label{eq:starting_point}
h_{\tau,D}(x;\bar{x})\geq h_{\tau,D}(\hat{x};\bar{x})+\frac{\mu_{h,D}}{2}\|x-\hat{x}\|_D^2, \quad \forall x\in\H
\end{equation}
or, equivalently,
\begin{equation*}\label{eq:tech1}
\|x-\hat{x}\|_{D}^2\leq \frac{2}{\mu_{h,D}}(h_{\tau,D}(x;\bar{x})-h_{\tau,D}(\hat{x};\bar{x})), \quad \forall x\in\H.
\end{equation*}
Choosing $x=\tilde{x}$ above and combining it with \eqref{eq:eps_approx} leads to
\begin{equation}\label{eq:tech2}
\|\tilde{x}-\hat{x}\|_D^2\leq \frac{2}{\mu_{h,D}}\epsilon.
\end{equation}
Starting again from \eqref{eq:starting_point}, we can thus write the following chain of inequalities
\begin{align*}
h_{\tau,D}(x;\bar{x})&\geq h_{\tau,D}(\hat{x};\bar{x})+\frac{\mu_{h,D}}{2}\|x-\hat{x}\|_D^2\\
&=h_{\tau,D}(\tilde{x};\bar{x})+h_{\tau,D}(\hat{x};\bar{x})-h_{\tau,D}(\tilde{x};\bar{x})+\frac{\mu_{h,D}}{2}\|x-\hat{x}\|_D^2\\
&\geq h_{\tau,D}(\tilde{x};\bar{x})-\epsilon+\frac{\mu_{h,D}}{2}\|x-\tilde{x}\|_D^2+\frac{\mu_{h,D}}{2}\|\tilde{x}-\hat{x}\|_D^2+\mu_{h,D}\langle D(x-\tilde{x}), \tilde{x}-\hat{x}\rangle\\
&\geq h_{\tau,D}(\tilde{x};\bar{x})+\frac{\mu_{h,D}}{2}\|x-\tilde{x}\|_D^2-\epsilon-\mu_{h,D}\sqrt{\frac{2\epsilon}{\mu_{h,D}}}\|x-\tilde{x}\|_D, \quad \forall x\in\mathcal{H}
\end{align*}
where the third inequality follows from the definition of $\epsilon-$approximation \eqref{eq:eps_approx} and the three-point-equality $\|a-c\|_D^2=\|a-b\|_D^2+\|b-c\|_D^2+2\langle D(a-b),b-c\rangle $ and the fourth one from the Cauchy-Schwarz inequality, combined with relation \eqref{eq:tech2} and the non-negativity of the quantity $\|\tilde{x}-\hat{x}\|_D^2$. Hence, we deduce
\begin{equation}\label{eq:key}
h_{\tau,D}(x;\bar{x})\geq h_{\tau,D}(\tilde{x};\bar{x})+\frac{\mu_{h,D}}{2}\|\tilde{x}-x\|^2-\epsilon-\mu_{h,D}\sqrt{\frac{2\epsilon}{\mu_{h,D}}}\|x-\tilde{x}\|_D, \quad \forall x\in\H.
\end{equation}
Now, for all $x\in\H$, we have
\begin{align*}
& F(x)+(1-\tau\mu_{f,D})\frac{\|x-\bar{x}\|_D^2}{2\tau}\geq g(x)+f(\bar{x})+\langle \nabla f(\bar{x}),x-\bar{x}\rangle+\frac{\|x-\bar{x}\|_D^2}{2\tau}=h_{\tau,D}(x;\bar{x})\\
&\overset{\eqref{eq:key}}{\geq } h_{\tau,D}(\tilde{x};\bar{x})+\frac{\mu_{h,D}}{2}\|\tilde{x}-x\|^2-\epsilon-\mu_{h,D}\sqrt{\frac{2\epsilon}{\mu_{h,D}}}\|x-\tilde{x}\|_D\\
&= g(\tilde{x})+f(\bar{x})+\langle \nabla f(\bar{x}),\tilde{x}-\bar{x}\rangle+\frac{\|\tilde{x}-\bar{x}\|_D^2}{2\tau}+\frac{\tau\mu_{g,D}+1}{2\tau}\|\tilde{x}-x\|_D^2-\epsilon-\mu_{h,D}\sqrt{\frac{2\epsilon}{\mu_{h,D}}}\|x-\tilde{x}\|_D\\
&=F(\tilde{x})+\frac{\|\tilde{x}-\bar{x}\|_D^2}{2\tau}-\mathbb{D}_f(\tilde{x},\bar{x})+\frac{\tau\mu_{g,D}+1}{2\tau}\|\tilde{x}-x\|_D^2-\epsilon-\sqrt{2\epsilon\left(\frac{1}{\tau}+\mu_{g,D}\right)}\|x-\tilde{x}\|_D,
\end{align*}
which concludes the proof.
\end{proof}
%\begin{Remark}\label{rem:taumu}
%We observe that, if $\tau>0$ is chosen such that \eqref{descent} holds, then the term $1-\tau \mu_{f,D}$ is non-negative. This is evident by combining \eqref{descent} with the $\mu_{f,D}-$strong convexity of $f$ with respect to the $D-$norm, which entails
%\begin{multline}
%f(x)+\langle \nabla f(x),y-x\rangle +\frac{\mu_{f,D}}{2}\|y-x\|_D^2 \\ \leq f(y)\leq f(x)+\langle \nabla f(x),y-x\rangle +\frac{1}{2\tau}\|y-x\|_D^2, \quad \forall \ x\in Y, \ y\in\operatorname{dom}(f), \nonumber
%\end{multline}
%which yields $\tau\mu_{f,D}\leq 1$.
%%
%%
%% under the condition $\tau\leq \frac{\eta_{inf}}{L_f}$ required in Lemma \ref{lemmadescent}, . Indeed, when $f$ is $\mu_f-$ strongly convex and continuously differentiable with $L_f-$Lipschitz continuous gradient on $\H$,
%%$\mu_f\leq L_f$. (Indeed, we have that $f(x)-\frac{\mu_f}{2}\|x\|^2$ is a convex function; as a consequence, we can write
%%$$L_f \|x-y\|^2 -\mu_f \|x-y\|^2 \geq <\nabla f(x)-\nabla f(y)- \mu_{f} (x-y), x-y>\geq 0$$
%%  Then, we have
%%$$\tau\leq \frac{\eta_{inf}}{L_f}\leq \frac{\eta_{sup}}{\mu_f}\leq \frac{1}{\mu_{f,D}}.$$
%\end{Remark}

\section{SAGE-FISTA: description and convergence analysis}\label{sec:2}

%\subsection{The proposed algorithm}
%In this section, 
In the following, we describe a generalized version of the popular ``Fast Iterative Soft-Thresholding Algorithm'' (FISTA) proposed in \cite{Beck-Teboulle-2009b}, which is suited for solving the (possibly strongly) convex problem \eqref{minf} by means of an appropriate scaled and inexact inertial forward-backward splitting.

%For $k\geq 0$,  we recall that each FISTA iteration can be written as \eqref{eq:main_inertial}
%\begin{eqnarray}
%y^{(k+1)}&=&x^{(k)}+\beta_{k+1}(x^{(k)}-x^{(k-1)})\label{eq:inertial}\\
%x^{(k+1)}&=&\operatorname{prox}_{\tau_{k+1}g}(y^{(k+1)}-\tau_{k+1}\nabla f(y^{(k+1)})),\label{eq:fb_step}
%\end{eqnarray}
%where $y^{(k+1)}$ is the inertial point, $x^{(k+1)}$ is the new iterate, $\beta_{k+1}\in(0,1)$ is the inertial parameter computed in order to guarantee at least a $\mathcal{O}(1/k^2)$ complexity result for the function values \cite{Attouch2017,Beck-Teboulle-2009b}, and $\tau_{k+1}>0$ is the algorithmic step-size.

%In the following, we propose the 

We name our algorithm SAGE-FISTA (Scaled Adaptive GEneralized FISTA); it is obtained by incorporating in the iterative scheme \eqref{eq:main_inertial} the following features:
\begin{itemize}
    \item the use of a variable non-Euclidean metric in the computation of the proximal--gradient point, which is induced along the iterations by the elements of a sequence of linear, bounded and self-adjoint positive operators $\{D_k\}_{k\in\mathbb{N}}$; such operators are typically chosen in order to capture some second order information of the differentiable part $f$ at the current iterate $x^{(k)}$ (see e.g. \cite{Bonettini-etal-2009,Bonettini2018a,Bonettini2019,Lanteri-etal-2001});  
    \item the approximate computation of the proximal-gradient point according to the notion of $\epsilon$~-approximation given in Definition \ref{def:eps_approx};
    \item a non-monotone backtracking strategy similar to the one considered in \cite{Calatroni-Chambolle-2019}, which allows for possible increasing of the step-size $\tau_{k+1}$ at each iteration; differently from monotone Armijo-type approaches, this strategy is  particularly helpful when the initial $\tau_0$ is chosen to be extremely small, which corresponds to a (pessimistic) large estimate $L_0$ of the Lipschitz constant value $L_f$;
    \item a novel selection rule for the inertial parameter $\beta_{k+1}$, which depends on both the strong convexity moduli $\mu_f,\mu_g$ and the bounds on the spectrum of the operators $D_{k+1}$. 
    %This choice guarantees linear convergence for the function values whenever one between $f$ or $g$ is strongly convex.
\end{itemize}
The proposed SAGE-FISTA is fully reported in Algorithm \ref{algo:GFISTA}.

\begin{algorithm}[H]
\caption{Scaled Adaptive GEneralized FISTA (SAGE-FISTA)}\label{algo:GFISTA}
{\bf Parameters:} $\rho\in (0,1)$, $\delta\in(0,1]$, $\{\eta_{sup}^k\}_{k\in\mathbb{N}}$ s.t. $0< \eta_{inf}\leq \eta_{sup}^k\leq \eta_{sup}$, $\{\mu_{f,k}\}_{k\in\mathbb{N}}$, $\{\mu_{g,k}\}_{k\in\mathbb{N}}$, $\{\mu_{k}\}_{k\in\mathbb{N}}$ s.t. $\mu_{f,k}= \frac{\mu_f}{\eta_{sup}^k}$, $\mu_{g,k}= \frac{\mu_g}{\eta_{sup}^k}$, $\mu_{k}=\mu_{f,k}+\mu_{g,k}$. \\%and $\{\gamma_k\}_{k\in\mathbb{N}}$ s.t. $\gamma_k\geq 0$, $\sum_{k=1}^{\infty}\gamma_k<\infty$. \\
{\bf Initialization:} $\tau_0\in\mathbb{R}$ s.t. $0<\tau_0\mu_{f,0}<1$, $q_0=\frac{\tau_0\mu_{0}}{1+\tau_0\mu_{g,0}}$, $x^{(-1)}\in\mathcal{H}$, $x^{(0)}=x^{(-1)}$, and $t_0\in\R$ s.t. $1\leq t_0\leq \frac{1}{\sqrt{q_0}}$.\\

\noindent \textsc{For $k=0,1,2,\ldots$}
\begin{itemize}
\item[] Choose $D_{k+1}\in\mathcal{D}_{\eta_{inf}}^{\eta^k_{sup}}$ and set
$$
\tau_{k+1}^0=\frac{\tau_k}{\delta}.
$$

\noindent \textsc{For $i=0,1,\ldots$ repeat}
\begin{AlgorithmSteps}[6]
\item[1] Set $\tau_{k+1} = \rho^i \tau_{k+1}^0$;
\item[2] Set $q_{k+1}=\frac{\tau_{k+1}\mu_{k+1}}{1+\tau_{k+1}\mu_{g,k+1}}$;
\item[3] Set $t_{k+1} = \frac{1-q_kt_k^2+\sqrt{(1-q_kt_k^2)^2+4\frac{q_k}{q_{k+1}}t_k^2}}{2}$.
\item[4]\label{step2} Set $\beta_{k+1}=\left(\frac{t_k-1}{t_{k+1}}\right)\frac{1+\tau_{k+1}\mu_{g,k+1}-t_{k+1}\tau_{k+1}\mu_{k+1}}{1-\tau_{k+1}\mu_{f,k+1}}$.
\item[5] Set $y^{(k+1)}=P_{Y,D_{k+1}}(\xk+\beta_{k+1}(\xk-x^{(k-1)}))$.
\item[6] Set $\hat{x}^{(k+1)}=\prox_{\tau_{k+1} g}^{D_{k+1}}(y^{(k+1)}-\tau_{k+1}D_{k+1}^{-1}\nabla f(y^{(k+1)}))$, choose $\epsilon_{k+1}\geq 0$ and compute $\xkk\in\dom(g)$ such that
\begin{equation*}\label{eq:iterate}
\xkk\approx_{\epsilon_{k+1}} \hat{x}^{(k+1)}.
\end{equation*}
\end{AlgorithmSteps}
\noindent \textsc{Until $\mathbb{D}_f(x^{(k+1)},y^{(k+1)})< \frac{\| x^{(k+1)}-y^{(k+1)}\|_{D_{k+1}}^2}{2\tau_{k+1}}$}.
\end{itemize}
\noindent \textsc{End}
\end{algorithm}

At each iteration $k\geq 0$, the preliminary step of SAGE-FISTA is the choice of both the linear, bounded and self-adjoint operator $D_{k+1}\in\mathcal{D}_{\eta_{inf}}^{\eta^k_{sup}}$ and the tentative step-size $\tau_{k+1}^0=\tau_k/\delta$. If $\delta<1$, then an adaptive increasing of $\tau_k$ is allowed (see \cite{Calatroni-Chambolle-2019,Florea2020} for analogous strategies applied to GFISTA and \cite{Scheinberg-2014} for FISTA), while if $\delta=1$, a classical Armijo-type backtracking is performed (as in \cite{Beck-Teboulle-2009b} for FISTA). Then, an inner backtracking procedure computing the next iterate $x^{(k+1)}$ follows. At each inner backtracking iteration $i\geq 0$, the iterates $q_{k+1}$, $t_{k+1}$ are first updated ({\sc STEPS 2--3}); such quantities depend not only on the scaled quantities $\mu_{f,k+1}=\mu_f/\eta_{sup}^k$, $\mu_{g,k+1}=\mu_g/\eta_{sup}^k$ and $\mu_{k+1}=\mu_{f,k+1}+\mu_{g,k+1}$, but also on the tentative step-size $\tau_{k+1}$. Then, the inertial parameter $\beta_{k+1}$ and the projected inertial point $y^{(k+1)}$ are computed ({\sc STEPS 4--5}). Finally, an approximated point $x^{(k+1)}$ such that $x^{(k+1)}\approx_{\epsilon_{k+1}} \prox_{\tau_{k+1} g}^{D_{k+1}}(y^{(k+1)}-\tau_{k+1}D_{k+1}^{-1}\nabla f(y^{(k+1)}))$ is computed. For each backtracking iteration, a check on the condition 
\begin{equation}\label{eq:backtracking}
\mathbb{D}_{f}(x^{(k+1)},y^{(k+1)})< \frac{\|x^{(k+1)}-y^{(k+1)}\|_{D_{k+1}}^2}{2\tau_{k+1}}
\end{equation}
is performed ({\sc STEP 6}). If such condition is not satisfied, then the step-size is reduced by a factor $\rho$ ({\sc STEP 1}) and  {\sc STEPS 2--6} are performed until the condition is satisfied. Note that this procedure ends in a finite number of steps, thanks to Lemma \ref{lemmadescent}. 

When $\eta_{inf}^k=\eta_{sup}^k=1$, $D_k\equiv \mathcal{I}$ and $Y=\mathcal{H}$, Algorithm \ref{algo:GFISTA} reduces to the Generalised FISTA (GFISTA) algorithm studied in \cite{Calatroni-Chambolle-2019}. The presence of the strong convexity moduli $\mu_f,\mu_g$ in the update rule of the inertial parameter $\beta_{k+1}$ allows to prove a linear convergence rate for the function values also when an adaptive backtracking strategy is adopted, see \cite[Theorem 4.6]{Calatroni-Chambolle-2019}.

When $D_k\neq \mathcal{I}$, Algorithm \ref{algo:GFISTA} can be considered as a scaled version of GFISTA where the inexact computation of the proximal-gradient point is explicitly taken into account in \textsc{STEP 6}. Note that the introduction of the positive operator $D_k$ further imposes a modification of the parameters $q_{k+1}$ and $t_{k+1}$ in \textsc{STEP 2} and \textsc{STEP 3}, respectively. However, linear convergence rates still hold, as we will rigorously show in Section \ref{sec:2}.

Finally, note that when $\mu=\mu_f+\mu_g=0$ and $\delta=1$, {\sc STEP 3} reduces to
$$
t_{k+1}=\frac{1+\sqrt{1+4\frac{\tau_k}{\tau_{k+1}}t_k^2}}{2},
$$
and Algorithm \ref{algo:GFISTA} becomes the inexact scaled forward-backward extrapolation method equipped with Armijo-type backtracking proposed in \cite{Bonettini2018a}. In this case, standard $\mathcal{O}(1/k^2)$ convergence rates can be proved, coherently with the result obtained in \cite[Theorem 3.1]{Bonettini2018a}.

\begin{Remark}
We observe that
$$
0\leq q_{k}< 1, \quad t_{k}\geq 1, \quad \forall \ k\geq 1.
$$
Indeed $q_k$ is nonnegative due to the nonnegativity of $\mu_k$, $\mu_g$, $\tau_k$, and combining the $\mu_{f,k}-$strong convexity of $f$ with respect to the $D_{k}-$norm with \eqref{eq:backtracking}, we have
\begin{equation*}
\frac{\mu_{f,k}}{2}\|x^{(k)}-y^{(k)}\|_{D_{k}}^2\leq \mathbb{D}_f(x^{(k)},y^{(k)})< \frac{\| x^{(k)}-y^{(k)}\|_{D_{k}}^2}{2\tau_{k}},
\end{equation*}
which implies $\mu_{f,k}\tau_k< 1$ and thus $q_k< 1$. Furthermore, proceeding as in \cite[Lemma 4.4]{Calatroni-Chambolle-2019}, we have 
\begin{align*}  
t_{k} 
%&= \frac{1-q_{k-1}t^2_{k-1} + \sqrt{\left(1-q_{k-1}t^2_{k-1} \right)^2 + 4\frac{q_{k-1}}{q_{k}}t^2_{k-1}}} {2} \notag \\ & 
\geq \frac{1-q_{k-1}t^2_{k-1} + \sqrt{\left(1-q_{k-1}t^2_{k-1} \right)^2 + 4 q_{k-1}t^2_{k-1}}} {2} 
 =   \frac{1-q_{k-1}t^2_{k-1} + \sqrt{\left(1+q_{k-1}t^2_{k-1}\right)^2}} {2}=1. \notag
\end{align*}
\end{Remark}
\begin{Remark}
When $\mu=\mu_f+\mu_g>0$, the update rule for the sequence $\left\{ t_k \right\}_k$ is obtained by imposing the following condition at each iteration:
\begin{equation}  \label{eq:choice_tk}
\tau'_{k+1}t_{k+1}(t_{k+1}-1) = \frac{\mu_k}{\mu_{k+1}}\omega_{k+1}\tau'_k t^2_k, \quad \forall \ k\geq 0
\end{equation}
where we have set:
\begin{equation} \label{eq:tau_prime_omega_}
\tau_k':= \frac{\tau_k}{1+\tau_k\mu_{g,k}}, \quad \omega_{k}:=1-t_{k}q_{k} = \frac{1+\tau_k\mu_{g,k}-\tau_k t_{k}\mu_k}{1+\tau_k\mu_{g,k}}.
\end{equation}
Indeed, note that $t_{k+1}$ is defined as the solution of the following second-degree equation
$$
t^2-(1-q_kt_k^2)t-\frac{q_k}{q_{k+1}}t_k^2=0.
$$
By now observing that $q_k/q_{k+1}=(\mu_k\tau_k')/(\mu_{k+1}\tau_{k+1}')$, we can rewrite the equation above as
$$
t^2-\left(1-\frac{\mu_k\tau_k't_k^2}{\mu_{k+1}\tau_{k+1}'}q_{k+1}\right)t-\frac{\mu_k\tau_k't_k^2}{\mu_{k+1}\tau_{k+1}'}=0.
$$
Then, choosing $t=t_{k+1}$, multiplying by $\tau_{k+1}'$ and rearranging terms leads to \eqref{eq:choice_tk}.
When $\mu=0$, equality \eqref{eq:choice_tk} holds by neglecting the quantity $\mu_k/\mu_{k+1}$. 
\end{Remark}

We conclude this section with the following technical result holding for the sequences $\{q_kt_k^2\}_{k\in\mathbb{N}}$ and $\{q_kt_k\}_{k\in\mathbb{N}}$.

\begin{Lemma}\label{rem:2}
If $0\leq t_0\leq \frac{1}{\sqrt{q_0}}$ (as assumed in Algorithm \ref{algo:GFISTA}), then it follows that
\begin{equation}\label{eq:qt}
q_kt_{k}^2\leq 1 \quad\text{ and }\quad \ q_k t_{k}< 1, \quad \forall \ k\geq 0.
\end{equation}
\end{Lemma}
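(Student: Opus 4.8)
The plan is to argue by induction on $k$, using the recursive definition of $t_{k+1}$ in \textsc{STEP 3} together with the already-established bounds $0\le q_k<1$ and $t_k\ge 1$ from the preceding Remark. The base case $k=0$ is exactly the hypothesis $0\le t_0\le 1/\sqrt{q_0}$, which gives $q_0t_0^2\le 1$; the strict inequality $q_0t_0<1$ then follows because $q_0t_0 = (q_0t_0^2)/t_0 \le 1/t_0 \le 1$, with strictness coming either from $q_0<1$ when $t_0=1$, or — if $t_0>1$ — from $q_0t_0^2\le1$ forcing $q_0t_0\le 1/t_0<1$. (One should double-check the degenerate case $q_0=0$, where both claims are trivial since $q_0t_0^2=q_0t_0=0<1$; similarly $q_k=0$ makes the inductive step trivial, so we may assume $q_k>0$ below.)

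For the inductive step, assume $q_kt_k^2\le 1$ and $q_kt_k<1$. I would first observe from the quadratic defining $t_{k+1}$, namely $t_{k+1}^2 - (1-q_kt_k^2)t_{k+1} - \tfrac{q_k}{q_{k+1}}t_k^2 = 0$, that
$$
q_{k+1}t_{k+1}^2 = q_{k+1}(1-q_kt_k^2)t_{k+1} + q_kt_k^2.
$$
The second term is $\le 1$ by the inductive hypothesis, and since $q_kt_k^2\le 1$ the factor $(1-q_kt_k^2)\ge 0$; so it remains to control $q_{k+1}(1-q_kt_k^2)t_{k+1}$. Here I would use $t_{k+1}\le 1 + \text{(something)}$: from the closed form in \textsc{STEP 3}, $t_{k+1} = \tfrac12\big(1-q_kt_k^2 + \sqrt{(1-q_kt_k^2)^2 + 4\tfrac{q_k}{q_{k+1}}t_k^2}\big)$, and bounding the square root by the sum of the two square roots of its summands gives $t_{k+1}\le (1-q_kt_k^2) + \sqrt{\tfrac{q_k}{q_{k+1}}}\,t_k$. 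Substituting and using $1-q_kt_k^2\le 1$ and $q_{k+1}<1$ should yield $q_{k+1}t_{k+1}^2 \le q_{k+1}(1-q_kt_k^2)^2 + \sqrt{q_kq_{k+1}}\,(1-q_kt_k^2)t_k + q_kt_k^2$; one then checks this is $\le 1$ by writing it in terms of $a:=\sqrt{q_{k+1}}(1-q_kt_k^2)$ and $b:=\sqrt{q_k}\,t_k\le 1$, so the expression is $a^2 + ab + b^2$ — which is not obviously $\le1$, so I expect I will instead need the sharper route: keep the exact quadratic relation and the exact closed form, and verify directly that $t_{k+1}\le 1/\sqrt{q_{k+1}}$ is equivalent, after substituting the closed form, to an inequality that reduces (via $q_kt_k^2\le1$) to something manifestly true.

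The cleanest line, which I would pursue as the main approach, is: show $t_{k+1}\le 1/\sqrt{q_{k+1}}$ directly. Since $t_{k+1}$ is the larger root of $\phi(t):=q_{k+1}t^2 - q_{k+1}(1-q_kt_k^2)t - q_kt_k^2$ and $\phi$ opens upward, it suffices to show $\phi(1/\sqrt{q_{k+1}})\ge 0$, i.e.
$$
1 - \sqrt{q_{k+1}}\,(1-q_kt_k^2) - q_kt_k^2 \ge 0,
$$
i.e. $(1-q_kt_k^2)\big(1 - \sqrt{q_{k+1}}\big)\ge 0$, which holds because $q_kt_k^2\le 1$ and $q_{k+1}<1$. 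This gives $q_{k+1}t_{k+1}^2\le 1$ immediately. For the strict bound $q_{k+1}t_{k+1}<1$: if $q_kt_k^2<1$ then $\phi(1/\sqrt{q_{k+1}})>0$ so $t_{k+1}<1/\sqrt{q_{k+1}}$ strictly, giving $q_{k+1}t_{k+1}^2<1$ and hence $q_{k+1}t_{k+1}<1$ (as $t_{k+1}\ge1$); the only remaining case is $q_kt_k^2=1$, where the quadratic becomes $q_{k+1}t_{k+1}^2 = 1$ exactly, so $q_{k+1}t_{k+1} = 1/t_{k+1} < 1$ since then $t_{k+1} = 1/\sqrt{q_{k+1}} > 1$ because $q_{k+1}<1$. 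The main obstacle is simply organizing these case distinctions ($q_k=0$; $q_kt_k^2<1$ vs. $=1$) so that the strict inequality is never lost; the algebra itself is light once one passes through the evaluation $\phi(1/\sqrt{q_{k+1}})$ rather than manipulating the closed-form radical.
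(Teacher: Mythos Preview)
Your final approach---evaluating the upward-opening parabola $\phi(t)=q_{k+1}t^2-q_{k+1}(1-q_kt_k^2)t-q_kt_k^2$ at $t=1/\sqrt{q_{k+1}}$ and factoring $\phi(1/\sqrt{q_{k+1}})=(1-q_kt_k^2)(1-\sqrt{q_{k+1}})\ge 0$---is correct and cleanly handles both the bound $q_{k+1}t_{k+1}^2\le 1$ and, via the case split $q_kt_k^2<1$ versus $q_kt_k^2=1$, the strict inequality $q_{k+1}t_{k+1}<1$. (Your aside that ``$q_k=0$ makes the inductive step trivial'' should really read $q_{k+1}=0$; but since either $\mu=0$ forces all $q_j=0$ or $\mu>0$ forces all $q_j>0$, this is harmless.)

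The paper proceeds differently: it rewrites the same quadratic relation, via $\omega_{k+1}=1-q_{k+1}t_{k+1}$ and equation~\eqref{eq:choice_tk}, as the identity
\[
q_{k+1}t_{k+1}^2 \;=\; q_{k+1}t_{k+1}+(1-q_{k+1}t_{k+1})q_kt_k^2 \;=\; 1+\omega_{k+1}(q_kt_k^2-1),
\]
and then argues the two conclusions from this identity, obtaining $q_{k+1}t_{k+1}<1$ by contradiction (if $q_{k+1}t_{k+1}\ge 1$ the identity forces $q_{k+1}t_{k+1}^2\le q_{k+1}t_{k+1}$, hence $t_{k+1}\le 1$, hence $q_{k+1}t_{k+1}=q_{k+1}<1$). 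Your root-location argument is a genuinely different route: it is geometrically transparent, produces the neat factorization above, and avoids the slight ordering delicacy in the paper's presentation (where the conclusion $q_{k+1}t_{k+1}^2\le 1$ is stated before establishing $\omega_{k+1}\ge 0$). The paper's route, on the other hand, is a one-line identity once $\omega_{k+1}$ is in hand and ties in directly with how $\omega_{k+1}$ is used later in the convergence analysis.
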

\begin{proof}
If $\mu=\mu_f+\mu_g=0$, the thesis follows trivially. Then, we assume $\mu>0$ and proceed by induction as in \cite[Lemma 4.5]{Calatroni-Chambolle-2019}. For $k=0$, the assumption on  $t_0$ reads $q_0 t_0^2\leq 1$, which entails $q_0 t_0\leq \sqrt{q_0}< 1$, so the thesis holds. Then, let us assume that \eqref{eq:qt} holds for some $k\geq 0$. Recalling that $t_{k+1}$ is the solution of \eqref{eq:choice_tk}, we have:
\begin{equation*}
q_{k+1} t^2_{k+1} = q_{k+1} t_{k+1}  + q_{k+1}\omega_{k+1}\frac{\mu_k\tau'_k}{\mu_{k+1}\tau'_{k+1}}t^2_k = q_{k+1} t_{k+1} + \omega_{k+1}q_kt^2_k = 1+ \omega_{k+1}(q_kt^2_k -1 ) \leq 1,
\end{equation*}
which follows by simply applying the induction assumption. Thus, we get $q_{k+1}t_{k+1}^2\leq 1$. Notice that the same chain of equalities also implies:
\[
q_{k+1} t^2_{k+1}  = q_{k+1} t_{k+1} + (1-q_{k+1}t_{k+1})t^2_k q_{k}.
\]
By contradiction, if $q_{k+1}t_{k+1}\geq 1$, then the previous inequality would imply
$$
q_{k+1}t_{k+1}^2\leq  q_{k+1}t_{k+1} \quad \Leftrightarrow \quad q_{k+1}t_{k+1}\leq q_{k+1}< 1,
$$
which is absurd. Thus, we deduce $q_{k+1}t_{k+1}< 1$, as required.
\end{proof}

\subsection{Convergence rates}
We now  show that SAGE-FISTA enjoys a convergence rate in function values that is either $R-$linear or proportional to $1/k^2$, depending on whether the strong convexity modulus $\mu=\mu_f+\mu_g$ is positive or null. A couple of technical results holding for nonnegative sequences are needed.

\begin{Lemma}\label{key_lemma}
\cite[Lemma 1]{Schmidt2011} Let $\{p_k\}_{k\in\N}$, $\{q_k\}_{k\in\N}$, $\{\lambda_k\}_{k\in\N}$ be sequences of real nonnegative numbers, with $\{q_k\}_{k\in\N}$ being a monotone nondecreasing sequence, such that the following recursive property is satisfied:
\begin{equation}\label{eq:tech3}
p_k^2\leq q_k+\sum_{i=1}^{k}\lambda_ip_i, \quad \forall \ k\geq 1.
\end{equation}
Then we have
\begin{equation}\label{eq:key_ineq}
p_k\leq \frac{1}{2}\sum_{i=1}^{k}\lambda_i+\left(q_k+\left(\frac{1}{2}\sum_{i=1}^{k}\lambda_i\right)^2\right)^{\frac{1}{2}}, \quad \forall \ k\geq 1.
\end{equation}
\end{Lemma}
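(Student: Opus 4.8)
The plan is to reduce the recursive inequality \eqref{eq:tech3} to a single scalar quadratic inequality. First I would abbreviate the two partial sums
$$
A_k := \frac12\sum_{i=1}^k \lambda_i, \qquad B_k := q_k + \sum_{i=1}^k \lambda_i p_i,
$$
so that the hypothesis \eqref{eq:tech3} reads simply $p_k^2 \le B_k$ for every $k\ge 1$.

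The crucial step — and the only place where the monotonicity of $\{q_k\}$ is used — is the observation that the single quantity $B_k$ bounds \emph{all} of $p_1,\dots,p_k$ simultaneously. Indeed, for $1\le i\le k$, using $q_i\le q_k$ and the nonnegativity of every term $\lambda_j p_j$,
$$
p_i^2 \le q_i + \sum_{j=1}^i \lambda_j p_j \le q_k + \sum_{j=1}^k \lambda_j p_j = B_k ,
$$
whence $p_i \le \sqrt{B_k}$ for all $i\le k$.

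Next I would feed this back into the definition of $B_k$:
$$
B_k = q_k + \sum_{i=1}^k \lambda_i p_i \le q_k + \sqrt{B_k}\,\sum_{i=1}^k \lambda_i = q_k + 2A_k\sqrt{B_k},
$$
so that the nonnegative number $t:=\sqrt{B_k}$ satisfies $t^2 - 2A_k t - q_k \le 0$. Solving this quadratic inequality and keeping the nonnegative branch gives $\sqrt{B_k} \le A_k + \sqrt{A_k^2 + q_k}$; since $p_k \le \sqrt{B_k}$ by the hypothesis, this is exactly \eqref{eq:key_ineq}.

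The argument is essentially bookkeeping and I do not foresee a genuine obstacle. The only points requiring care will be to check that each inequality in the chains above is legitimate term by term (nonnegativity of all summands and $q_i\le q_k$), and to pick the correct root when passing from $t^2 - 2A_k t - q_k\le 0$ with $t\ge 0$ to $t\le A_k+\sqrt{A_k^2+q_k}$.
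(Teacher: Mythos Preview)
Your argument is correct: the uniform bound $p_i\le\sqrt{B_k}$ for all $i\le k$ (obtained from the monotonicity of $\{q_k\}$ and the nonnegativity of the summands) is exactly what is needed to close the loop, and the resulting quadratic inequality in $\sqrt{B_k}$ yields \eqref{eq:key_ineq} after selecting the nonnegative root. The paper itself does not give a proof of this lemma but simply quotes it from \cite[Lemma~1]{Schmidt2011}; your derivation coincides with the standard one given there.
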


\begin{Lemma}\label{key_lemma_2}\cite{Polyak-1987}
Let $\{p_k\}_{k\in\mathbb{N}}$, $\{\zeta_k\}_{k\in\mathbb{N}}$, $\{\xi_k\}_{k\in\mathbb{N}}$ be sequences of real nonnegative numbers such that $\sum_{k=0}^{\infty}\zeta_k <\infty$, $\sum_{k=0}^{\infty}\xi_k <\infty$ and the following inequality is satisfied:
$$
p_{k+1}\leq (1+\zeta_k)p_k+\xi_k, \quad \forall \ k\geq 0.
$$
Then $\{p_k\}_{k\in\mathbb{N}}$ is a convergent sequence.
\end{Lemma}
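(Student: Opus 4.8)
The plan is to reduce the stated recursion to a genuinely monotone sequence by a suitable rescaling, and then exploit boundedness from below. First I would set $a_k := \prod_{j=0}^{k-1}(1+\zeta_j)$ (with the convention $a_0 := 1$) and note that, since $\sum_{k=0}^{\infty}\zeta_k < \infty$, we also have $\sum_{k=0}^{\infty}\log(1+\zeta_k)\leq \sum_{k=0}^{\infty}\zeta_k < \infty$; hence the nondecreasing sequence $\{a_k\}_{k\in\N}$ converges to a finite limit $a_\infty\in[1,\infty)$, and in particular $a_k\geq 1$ for every $k$.

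Next, dividing the assumed inequality $p_{k+1}\leq (1+\zeta_k)p_k+\xi_k$ by $a_{k+1}=(1+\zeta_k)a_k$ and using $a_{k+1}\geq 1$, I would obtain
\begin{equation*}
\frac{p_{k+1}}{a_{k+1}} \leq \frac{p_k}{a_k}+\frac{\xi_k}{a_{k+1}} \leq \frac{p_k}{a_k}+\xi_k, \quad \forall \ k\geq 0.
\end{equation*}
Setting $v_k := p_k/a_k\geq 0$ and $r_k := \sum_{j=k}^{\infty}\xi_j$ (finite because $\sum_{j}\xi_j<\infty$, with $r_k\to 0$), the sequence $s_k := v_k + r_k$ is then nonincreasing: indeed $s_{k+1}=v_{k+1}+r_{k+1}\leq v_k+\xi_k+r_{k+1}=v_k+r_k=s_k$.

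Being nonincreasing and bounded below by $0$, the sequence $\{s_k\}_{k\in\N}$ converges to some $s_\infty\geq 0$; since $r_k\to 0$, this yields $v_k = s_k - r_k \to s_\infty$, and therefore $p_k = a_k v_k \to a_\infty s_\infty$, which is exactly the claimed convergence of $\{p_k\}_{k\in\N}$.

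I do not expect any serious obstacle: the statement is elementary, and the only point that requires a touch of care is passing from summability of $\{\zeta_k\}_{k\in\N}$ to convergence (not merely boundedness) of the infinite product $\prod_k(1+\zeta_k)$, which is precisely what the logarithm estimate above provides; everything else is a telescoping argument repackaged as a ``monotone term plus vanishing tail'' decomposition.
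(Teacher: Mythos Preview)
Your argument is correct. The rescaling $v_k = p_k/a_k$ with $a_k=\prod_{j<k}(1+\zeta_j)$, followed by the ``monotone plus vanishing tail'' decomposition $s_k=v_k+r_k$, is a clean and standard way to establish this classical result, and each step checks out.

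There is nothing to compare against, however: the paper does not prove this lemma at all. It is stated with a citation to Polyak's book \cite{Polyak-1987} and used as a black box (specifically, in the proof of Theorem~\ref{thm:rate} to conclude that the products $\prod_{i=1}^k(1+\gamma_i)$ converge). So your proof is not ``essentially the same'' nor ``a different route'' --- it simply supplies what the paper outsources to a reference.
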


Furthermore, we make the following assumption on the sequence of operators $\{D_k\}_{k\in\mathbb{N}}$ and the sequence of upper bounding values $\{\eta_{sup}^k\}_{k\in\mathbb{N}}$.

\begin{assumptions}\label{ass:1}
There exists a sequence of real nonnegative numbers $\{\gamma_k\}_{k\in\mathbb{N}}$ s.t. $\sum_{k=0}^\infty\gamma_k<\infty$ and, for all $k\geq 0$, the following conditions hold  
\begin{eqnarray}
D_{k+1}&\preceq&(1+\gamma_{k+1})D_k,\label{eq:Dk_cond}\\
\frac{\eta_{sup}^{k+1}}{\eta_{sup}^k}&\leq & 1+\gamma_{k+1}.\label{eq:etak_cond}
\end{eqnarray}
\end{assumptions}

\begin{Remark}\label{eq:rem3}
Condition \eqref{eq:Dk_cond} has been often used to prove the convergence of variable metric FB methods in the convex setting, see for instance \cite{Combettes-Vu-2014,Bonettini-Loris-Porta-Prato-2015,Bonettini-Porta-Ruggiero-2016,Bonettini2018a}.
%It will be also fundamental to prove the convergence rate result for Algorithm \ref{algo:GFISTA}.
It is easy to see that \eqref{eq:Dk_cond} implies \eqref{eq:etak_cond} for the particular choice $\eta_{sup}^k=\|D_k^{\frac{1}{2}}\|^2$, with $D_k^{\frac{1}{2}}$ being the square root operator applied to $D_k$; when $\|\cdot\|=\|\cdot\|_2$, this amounts to choosing $\eta_{sup}^k=\|D_k\|_2$, i.e. the spectral radius of $D_k$. However, in general, we need to impose condition \eqref{eq:etak_cond} explicitly.
Let us mention two particular examples of sequences $\{D_k\}_{k\in\mathbb{N}}$ and $\{\eta_{sup}^k\}_{k\in\mathbb{N}}$ satisfying Assumption \ref{ass:1}, which we will use in the following numerical experiments.
\begin{itemize}
\item Suppose $D_k\equiv D$ and $\eta_{sup}^k\equiv \eta>0$, with $D\in \mathcal{D}^{\eta}_{\eta_{inf}}$. Then Assumption \ref{ass:1} holds with $\gamma_k\equiv 0$.
\item Suppose $D_k\in\mathcal{D}_{\eta_{inf}^k}^{\eta_{sup}^k}$ for all $k\geq 0$, and the upper and lower bounds converge to the same positive value at a sufficiently fast rate, that is
\begin{equation}\label{eq:eta_def}
\eta_{inf}^k=\eta-\nu_{inf}^k, \quad \eta_{sup}^k=\eta+\nu_{sup}^k,
\end{equation}
where $0\leq\nu_{inf}^k< \eta$, $\nu_{sup}^k\geq 0$, and $\sum_{k=0}^{\infty}\nu_{inf}^k<\infty$, $\sum_{k=0}^{\infty}\nu_{sup}^k<\infty$. In this case, recalling \eqref{ine_norm}, we can write
\begin{equation}\label{eq:scal_cond}
\|x\|^2_{D_{k+1}}\leq \eta_{sup}^{k+1}\|x\|^2= \frac{\eta_{sup}^{k+1}}{\eta_{inf}^{k}}\eta_{inf}^{k}\|x\|^2\leq \frac{\eta_{sup}^{k+1}}{\eta_{inf}^{k}}\|x\|_{D_{k}}^2, \quad \forall \ x\in\mathcal{H}.
\end{equation} 
Thanks to \eqref{eq:eta_def}, the factor multiplying $\|x\|_{D_k}^2$ can be rewritten as
\begin{equation*}
\frac{\eta_{sup}^{k+1}}{\eta_{inf}^{k}}= 1+\gamma_{k+1}, \quad \text{where }\quad \gamma_{k+1}=\frac{\nu_{sup}^{k+1}+\nu_{inf}^{k}}{\eta-\nu_{inf}^{k}}.
\end{equation*}
Since $\gamma_{k+1}\leq (\eta-\max_{k}\nu^k_{inf})^{-1}(\nu_{sup}^{k+1}+\nu_{inf}^{k})$ and the sequences $\{\nu^k_{inf}\}_{k\in\mathbb{N}}$, $\{\nu^k_{sup}\}_{k\in\mathbb{N}}$ are summable, we can conclude that condition \eqref{eq:Dk_cond} is satisfied. Furthermore, we also have
\begin{equation*}
\frac{\eta_{sup}^{k+1}}{\eta_{sup}^k}=\frac{\eta_{sup}^{k+1}}{\eta_{inf}^k}\cdot\frac{\eta_{inf}^k}{\eta_{sup}^k}\leq \frac{\eta_{sup}^{k+1}}{\eta_{inf}^k} = 1+\gamma_{k+1},
\end{equation*}
that is condition \eqref{eq:etak_cond} also holds.\\
Note that, if $\mathcal{H}=\mathbb{R}^n$, $\|\cdot\|=\|\cdot\|_2$, and $\{D_k\}_{k\in\mathbb{N}}$ are diagonal matrices, then we can always impose condition \eqref{eq:eta_def} by constraining the diagonal elements of $D_k$ in the interval $[\eta-\nu_{inf}^k,\eta+\nu_{sup}^k]$ (see Section \ref{sec:numerical} for a practical implementation). By doing so, we progressively ``squeeze'' the scaling matrices to a multiple of the identity matrix as the iterations increase.
\end{itemize}
\end{Remark}

We now provide a key descent inequality that holds for the iterates of SAGE-FISTA Algorithm \ref{algo:GFISTA}, in the same spirit of the one employed in \cite[p. 8]{Calatroni-Chambolle-2019}. The result follows by combining Lemma \ref{lem:technical} with the backtracking condition of Algorithm \ref{algo:GFISTA} and Assumption \ref{ass:1}.  

\begin{Lemma}\label{lem:fund}
Suppose that $F=f+g$ is $\mu-$strongly convex with $\mu\geq 0$ and Assumption \ref{ass:1} holds. Then, for all $k \geq 0$, we have
\begin{align*}
& \tau_{k+1}'t_{k+1}^2(F(\xkk)-F(x^*))+\frac{1}{2}\|x^*-\xkk-(t_{k+1}-1)(\xkk-\xk)\|_{D_{k+1}}^2 \nonumber \\
&  \leq \omega_{k+1}(1+\gamma_{k+1})\left(\tau_k't_k^2(F(\xk)-F(x^*)) +\frac{1}{2}\|x^*-\xk-(t_k-1)(\xk-x^{(k-1)})\|_{D_{k}}^2\right)\nonumber\\
&+\tau_{k+1}'t_{k+1}^2\epsilon_{k+1}+t_{k+1}\sqrt{2\epsilon_{k+1}\tau'_{k+1}}\|x^*-\xkk-(t_{k+1}-1)(\xkk-\xk)\|_{D_{k+1}}.
\end{align*}
\end{Lemma}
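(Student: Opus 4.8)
The plan is to apply the fundamental inexact descent inequality of Lemma~\ref{lem:technical} at the generic iteration $k$ with the substitutions $\bar{x}\mapsto y^{(k+1)}$, $\tau\mapsto\tau_{k+1}$, $D\mapsto D_{k+1}$, $\epsilon\mapsto\epsilon_{k+1}$, $\tilde{x}\mapsto x^{(k+1)}=\xkk$, $\mu_{f,D}\mapsto\mu_{f,k+1}$, $\mu_{g,D}\mapsto\mu_{g,k+1}$, and then to evaluate the resulting inequality at two judiciously chosen test points $x$, take a convex combination of the two instances, and finally massage the algebra using the definitions of $q_{k+1}$, $t_{k+1}$, $\beta_{k+1}$, $\tau_k'$, $\omega_{k+1}$ together with the backtracking condition~\eqref{eq:backtracking} and Assumption~\ref{ass:1}. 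The two test points will be $x=x^*$ (to bring in $F(x^*)$ and the distance to the minimizer) and $x=\xk$ (to create a term $F(\xk)-F(x^*)$ after shifting by $F(x^*)$); this is the standard estimate-sequence trick adapted from the unscaled GFISTA analysis in \cite{Calatroni-Chambolle-2019}.

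Concretely, I would first note that the bracketed term $\tfrac{\|\xkk-y^{(k+1)}\|_{D_{k+1}}^2}{2\tau_{k+1}}-\mathbb{D}_f(\xkk,y^{(k+1)})$ appearing on the left of~\eqref{eq:ine_fund} is strictly positive by the backtracking exit condition~\eqref{eq:backtracking}, hence can simply be dropped, leaving
\[
F(\xkk)+(1+\tau_{k+1}\mu_{g,k+1})\tfrac{\|x-\xkk\|_{D_{k+1}}^2}{2\tau_{k+1}}\le F(x)+(1-\tau_{k+1}\mu_{f,k+1})\tfrac{\|x-y^{(k+1)}\|_{D_{k+1}}^2}{2\tau_{k+1}}+\epsilon_{k+1}+\tfrac{\sqrt{2\epsilon_{k+1}\tau_{k+1}(1+\tau_{k+1}\mu_{g,k+1})}}{\tau_{k+1}}\|x-\xkk\|_{D_{k+1}}.
\]
Next I would form the combination with weights $(t_{k+1}-1)$ on the instance $x=\xk$ and $1$ on the instance $x=x^*$ (equivalently, weight $1/t_{k+1}$ and $1-1/t_{k+1}$ after scaling), multiply through by $t_{k+1}^2$, divide by $1+\tau_{k+1}\mu_{g,k+1}$ so that the coefficients become $\tau_{k+1}'$, and use the parallelogram/convexity identity
\[
t_{k+1}\|x^*-\xkk\|_{D_{k+1}}^2+t_{k+1}(t_{k+1}-1)\|\xk-\xkk\|_{D_{k+1}}^2\ge\|x^*-\xkk-(t_{k+1}-1)(\xkk-\xk)\|_{D_{k+1}}^2+(\text{nonneg. cross term})
\]
to reconstruct the squared-norm term on the left-hand side of the claim. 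The point $y^{(k+1)}$ on the right-hand side is then replaced using its definition in \textsc{STEP~5}: since $x^*\in Y$ and $P_{Y,D_{k+1}}$ is a $D_{k+1}$-projection, we have $\|x^*-y^{(k+1)}\|_{D_{k+1}}\le\|x^*-(\xk+\beta_{k+1}(\xk-x^{(k-1)}))\|_{D_{k+1}}$, and similarly for the $\xk$ instance, so the extrapolation point $\xk+\beta_{k+1}(\xk-x^{(k-1)})$ appears; expanding the combination $t_{k+1}^2\|\cdot\|^2$ of the two terms $\|x^*-\text{(extrap)}\|^2$ and $\|\xk-\text{(extrap)}\|^2$ and using the precise value of $\beta_{k+1}$ from \textsc{STEP~4} is exactly what produces the combination $\|x^*-\xk-(t_k-1)(\xk-x^{(k-1)})\|_{D_k}^2$ up to the factor $\tfrac{1-\tau_{k+1}\mu_{f,k+1}}{t_{k+1}^2}\cdot t_{k+1}^2$; here one invokes the defining relation~\eqref{eq:choice_tk} for $t_{k+1}$ (i.e.\ $\tau_{k+1}'t_{k+1}(t_{k+1}-1)=\tfrac{\mu_k}{\mu_{k+1}}\omega_{k+1}\tau_k't_k^2$) together with $\omega_{k+1}=1-t_{k+1}q_{k+1}$ to identify all coefficients. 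Finally Assumption~\ref{ass:1} — specifically $D_{k+1}\preceq(1+\gamma_{k+1})D_k$ and $\eta_{sup}^{k+1}/\eta_{sup}^k\le1+\gamma_{k+1}$, which controls the ratio $\tau_k'/\tau_{k+1}'$ and the change of metric from $\|\cdot\|_{D_{k+1}}$ to $\|\cdot\|_{D_k}$ — is used to absorb the index shift on the right-hand side into the factor $(1+\gamma_{k+1})$, and the error terms collect into $\tau_{k+1}'t_{k+1}^2\epsilon_{k+1}+t_{k+1}\sqrt{2\epsilon_{k+1}\tau_{k+1}'}\,\|x^*-\xkk-(t_{k+1}-1)(\xkk-\xk)\|_{D_{k+1}}$ after bounding $\|x^*-\xkk\|_{D_{k+1}}\le\|x^*-\xkk-(t_{k+1}-1)(\xkk-\xk)\|_{D_{k+1}}+(t_{k+1}-1)\|\xkk-\xk\|_{D_{k+1}}$ and reabsorbing the extra piece — or, more cleanly, by keeping the cross term positive throughout.

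The main obstacle is the bookkeeping of the $\beta_{k+1}$ expansion: one must verify that with $\beta_{k+1}=\bigl(\tfrac{t_k-1}{t_{k+1}}\bigr)\tfrac{1+\tau_{k+1}\mu_{g,k+1}-t_{k+1}\tau_{k+1}\mu_{k+1}}{1-\tau_{k+1}\mu_{f,k+1}}=\tfrac{t_k-1}{t_{k+1}}\cdot\tfrac{\omega_{k+1}(1+\tau_{k+1}\mu_{g,k+1})}{1-\tau_{k+1}\mu_{f,k+1}}$, the quadratic-in-norm expression $t_{k+1}^2\bigl[\|x^*-z\|_{D_{k+1}}^2\cdot\tfrac1{t_{k+1}}+\|\xk-z\|_{D_{k+1}}^2\cdot\tfrac{t_{k+1}-1}{t_{k+1}}\bigr]$ with $z=\xk+\beta_{k+1}(\xk-x^{(k-1)})$ collapses, after completing the square in $z$, to precisely $\tfrac{(1-\tau_{k+1}\mu_{f,k+1})}{\tau_{k+1}'t_{k+1}^2}$ times $\omega_{k+1}\tau_k't_k^2$ multiplying $\|x^*-\xk-(t_k-1)(\xk-x^{(k-1)})\|^2$ in the $D_{k+1}$-norm — the role of the projection $P_{Y,D_{k+1}}$ being only to pass from $z$ to $y^{(k+1)}$ with an inequality — and then switching from $\|\cdot\|_{D_{k+1}}$ to $\|\cdot\|_{D_k}$ costs the factor $1+\gamma_{k+1}$ via~\eqref{eq:Dk_cond}. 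This is the same computation carried out in \cite[Lemma~4.4 and p.~8]{Calatroni-Chambolle-2019} in the unscaled, exact case; the new ingredients here are only (i) carrying the $\epsilon_{k+1}$ terms along, which is handled automatically by Lemma~\ref{lem:technical}, and (ii) tracking the metric-change factor $\gamma_{k+1}$, which Assumption~\ref{ass:1} is tailored to control.
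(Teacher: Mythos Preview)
Your plan correctly identifies all the ingredients — Lemma~\ref{lem:technical}, the backtracking condition to drop the Bregman term, the projection estimate, the $\beta_{k+1}$ algebra, relation~\eqref{eq:choice_tk}, and Assumption~\ref{ass:1} for the metric change — and the ``bookkeeping'' you flag as the main obstacle is indeed the heart of the computation. However, there is a genuine gap in how you handle the inexactness error, and it stems from the choice to evaluate the descent inequality at \emph{two} test points $x^*$ and $\xk$ and then combine.

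The paper instead evaluates Lemma~\ref{lem:technical} at the \emph{single} point $x=\tfrac{1}{t_{k+1}}x^*+\tfrac{t_{k+1}-1}{t_{k+1}}\xk$, and then uses $\mu_{k+1}$-strong convexity of $F$ in the $D_{k+1}$-norm to split $F(x)$. For the quadratic terms and the $F$-values your two-point route is equivalent to this (the variance identity on the left/right quadratics produces exactly the same $-\tfrac{\mu_{k+1}(t_{k+1}-1)}{2}\|x^*-\xk\|_{D_{k+1}}^2$ correction that the paper gets from strong convexity). But the error contribution from Lemma~\ref{lem:technical} is \emph{linear} in $\|x-\xkk\|_{D_{k+1}}$, so your combination yields
\[
t_{k+1}\sqrt{2\epsilon_{k+1}\tau_{k+1}'}\Bigl(\|x^*-\xkk\|_{D_{k+1}}+(t_{k+1}-1)\|\xk-\xkk\|_{D_{k+1}}\Bigr),
\]
which by the triangle inequality is only $\ge t_{k+1}\sqrt{2\epsilon_{k+1}\tau_{k+1}'}\,\|u^{(k+1)}\|_{D_{k+1}}$ — the wrong direction. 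Your proposed fix (bound $\|x^*-\xkk\|\le\|u^{(k+1)}\|+(t_{k+1}-1)\|\xkk-\xk\|$ and ``reabsorb the extra piece'') leaves a residual of order $\sqrt{\epsilon_{k+1}}\,(t_{k+1}-1)\|\xkk-\xk\|_{D_{k+1}}$ on the right for which there is no matching nonnegative term on the left: the only leftover positive quantity in the intermediate step is proportional to $\mu_{k+1}\|\xk-\bar y^{(k+1)}\|_{D_{k+1}}^2\propto\mu_{k+1}\beta_{k+1}^2\|\xk-x^{(k-1)}\|_{D_{k+1}}^2$, which involves the wrong increment and vanishes when $\mu=0$. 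The ``cross term'' alternative you mention applies to quadratics, not to this linear term.

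The remedy is exactly the paper's move: apply Lemma~\ref{lem:technical} once at the convex combination $x=\tfrac{(t_{k+1}-1)\xk+x^*}{t_{k+1}}$. Then $\|x-\xkk\|_{D_{k+1}}=\tfrac{1}{t_{k+1}}\|u^{(k+1)}\|_{D_{k+1}}$ on the nose, the error term comes out in the stated form with no slack, and everything else in your outline (projection, $\beta_{k+1}$ expansion, use of~\eqref{eq:choice_tk} with $\mu_k/\mu_{k+1}=\eta_{sup}^{k+1}/\eta_{sup}^k$, and Assumption~\ref{ass:1}) goes through as you described.
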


\begin{proof}
We apply Lemma \ref{lem:technical} with the choices $\tau=\tau_{k+1}$, $\bar{x}=\ykk$, $\tilde{x}=\xkk$, $\epsilon=\epsilon_{k+1}$, $D=D_{k+1}$, $\mu_{f,D}=\mu_{f,k+1}$, $\mu_{g,D}=\mu_{g,k+1}$ and combine it with the backtracking condition \eqref{eq:backtracking} so as to obtain:
\begin{align*}
F(\xkk)& +(1+\tau_{k+1}\mu_{g,k+1})\frac{\|x-\xkk\|_{D_{k+1}}^2}{ 2\tau_{k+1}} \leq F(x)+(1-\tau_{k+1}\mu_{f,k+1})\frac{\|x-\ykk\|_{D_{k+1}}^2}{2\tau_{k+1}}\\
&+\epsilon_{k+1}+\frac{\sqrt{2\epsilon_{k+1}\tau_{k+1}(1+\tau_{k+1}\mu_{g,k+1})}}{\tau_{k+1}}\|x-\xkk\|_{D_{k+1}}. \notag
\end{align*}
Let us now define $x=\frac{(t_{k+1}-1)\xk+x^*}{t_{k+1}}$ with $\xk$ being the $k-$th iterate of the algorithm and $x^*\in\mathcal{H}$ a solution of \eqref{minf}, and set $\bar{y}^{(k+1)}=\xk+\beta_{k+1}(x^{(k)}-x^{(k-1)})$. Since the operator $P_{Y,D_{k+1}}$ is firmly non-expansive \cite[Remark 2.2]{Bonettini2018a} and $x\in Y$, we have $\|x-y^{(k+1)}\|_{D_{k+1}}^2\leq \|x-\bar{y}^{(k+1)}\|_{D_{k+1}}^2$. Then, the previous inequality becomes
\begin{align}\label{eq:ine_fund_fund_bis}
F(\xkk)&+(1+\tau_{k+1}\mu_{g,k+1})\frac{\|(t_{k+1}-1)\xk+x^*-t_{k+1}\xkk\|_{D_{k+1}}^2}{2\tau_{k+1} t_{k+1}^2}\leq F\left(\frac{(t_{k+1}-1)\xk +x^*}{t_{k+1}}\right)\nonumber\\
&+(1-\tau_{k+1}\mu_{f,k+1})\frac{\|(t_{k+1}-1)\xk+x^*-t_{k+1}\bar{y}^{(k+1)}\|_{D_{k+1}}^2}{2\tau_{k+1} t_{k+1}^2}\nonumber\\
&+\epsilon_{k+1}+\frac{\sqrt{2\epsilon_{k+1}\tau_{k+1}(1+\tau_{k+1}\mu_{g,k+1})}}{\tau_{k+1} t_{k+1}}\|(t_{k+1}-1)\xk+x^*-t_{k+1}\xkk\|_{D_{k+1}}.
\end{align}
We now multiply both sides of \eqref{eq:ine_fund_fund_bis} by $t_{k+1}^2$, apply the $\mu_{k+1}-$strong convexity of $F$ with respect to the $D_{k+1}-$norm on the right-hand side (the point $\frac{(t_{k+1}-1)\xk +x^*}{t_{k+1}}$ is a convex combination of $\xk$ and $x^*$) and subtract the term $t_{k+1}^2F(x^*)$ to both sides, thus obtaining
% \begin{align*}
% t_{k+1}^2F(\xkk)&+\frac{1+\tau_{k+1}\mu_{g,k+1}}{2\tau_{k+1}}\|(t_{k+1}-1)\xk+x^*-t_{k+1}\xkk\|_{D_{k+1}}^2\\
% &\leq t_{k+1}(t_{k+1}-1)F(\xk)+t_{k+1}F(x^*)-\frac{\mu_{k+1}(t_{k+1}-1)}{2}\|\xk-x^*\|_{D_{k+1}}^2\\
% &+(1-\tau_{k+1}\mu_{f,k+1})\frac{\|(t_{k+1}-1)\xk+x^*-t_{k+1}\textcolor{red}{\bar{y}^{(k+1)}}\|_{D_{k+1}}^2}{2\tau_{k+1}}\\
% &+t_{k+1}^2\epsilon_{k+1}+t_{k+1}\frac{\sqrt{2\epsilon_{k+1}\tau_{k+1}(1+\tau_{k+1}\mu_{g,k+1})}}{\tau_{k+1}}\|(t_{k+1}-1)\xk+x^*-t_{k+1}\xkk\|_{D_{k+1}}.
% \end{align*}
% Subtracting the term $t_{k+1}^2F(x^*)$ to both sides yields
\begin{align}\label{eq:intermediate}
t_{k+1}^2&(F(\xkk)-F(x^*))+\frac{1+\tau_{k+1}\mu_{g,k+1}}{2\tau_{k+1}}\|(t_{k+1}-1)\xk+x^*-t_{k+1}\xkk\|_{D_{k+1}}^2\nonumber\\
&\leq t_{k+1}(t_{k+1}-1)(F(\xk)-F(x^*))-\frac{\mu_{k+1}(t_{k+1}-1)}{2}\|\xk-x^*\|_{D_{k+1}}^2\nonumber\\
&+(1-\tau_{k+1}\mu_{f,k+1})\frac{\|(t_{k+1}-1)\xk+x^*-t_{k+1}\bar{y}^{(k+1)}\|_{D_{k+1}}^2}{2\tau_{k+1}}\nonumber\\
&+t_{k+1}^2\epsilon_{k+1}+t_{k+1}\frac{\sqrt{2\epsilon_{k+1}\tau_{k+1}(1+\tau_{k+1}\mu_{g,k+1})}}{\tau_{k+1}}\|(t_{k+1}-1)\xk+x^*-t_{k+1}\xkk\|_{D_{k+1}}.
\end{align}
By proceeding similarly as in \cite[p. 292]{Chambolle-Pock-2016}, we get:
\begin{align*}
& t_{k+1}^2(F(\xkk)-F(x^*))+\frac{1+\tau_{k+1}\mu_{g,k+1}}{2\tau_{k+1}}\|x^*-\xkk-(t_{k+1}-1)(\xkk-\xk)\|_{D_{k+1}}^2\nonumber\\
&+ \frac{t_{k+1}^2(t_{k+1}-1)\mu_{k+1}(1-\tau_{k+1}\mu_{f,k+1})}{2(1+\tau_{k+1}\mu_{g,k+1}-t_{k+1}\tau_{k+1}\mu_{k+1})}\|\xk-\bar{y}^{(k+1)}\|_{D_{k+1}}^2 \leq t_{k+1}(t_{k+1}-1)(F(\xk)-F(x^*))\nonumber \\
&+\frac{1+\tau_{k+1}\mu_{g,k+1}-t_{k+1}\tau_{k+1}\mu_{k+1}}{2\tau_{k+1}}\left\|x^*-\xk-\frac{t_{k+1}(1-\tau_{k+1}\mu_{f,k+1})}{1+\tau_{k+1}\mu_{g,k+1}-t_{k+1}\tau_{k+1}\mu_{k+1}}(\bar{y}^{(k+1)}-\xk)\right\|_{D_{k+1}}^2\nonumber\\
&+t_{k+1}^2\epsilon_{k+1}+t_{k+1}\frac{\sqrt{2\epsilon_{k+1}\tau_{k+1}(1+\tau_{k+1}\mu_{g,k+1})}}{\tau_{k+1}}\|x^*-\xkk-(t_{k+1}-1)(\xkk-\xk)\|_{D_{k+1}}.
\end{align*}
Recalling the definitions of $\omega_{k+1}$ and $\tau_{k+1}'$ given in \eqref{eq:tau_prime_omega_}, we observe that $0< \omega_{k+1}\leq  1$ since $0\leq q_{k+1}t_{k+1}< 1$ (see Lemma \ref{rem:2}).  Furthermore, the parameters $\beta_{k+1}$ appearing in Algorithm \ref{algo:GFISTA} can be rewritten in terms of $\omega_{k+1}$ as
%\begin{equation} \label{eq:omega}
%\tau_{k+1}':=\frac{\tau_{k+1}}{1+\tau_{k+1}\mu_{g,k+1}}, \quad 
%\omega_{k+1}:=1-t_{k+1}q_{k+1}=\frac{1+\tau_{k+1}\mu_{g,k+1}-t_{k+1}\tau_{k+1}\mu_{k+1}}{1+\tau_{k+1}\mu_{g,k+1}},
%\end{equation}

\begin{equation} \label{eq:beta}
\beta_{k+1}=\omega_{k+1}\frac{t_k-1}{t_{k+1}}\frac{1+\tau_{k+1}\mu_{g,k+1}}{1-\tau_{k+1}\mu_{f,k+1}}.
\end{equation}
 Recalling also the definition of $\bar{y}^{(k+1)}$, plugging  \eqref{eq:tau_prime_omega_} and \eqref{eq:beta} inside the previous inequality and multiplying it by $\tau_{k+1}'$, we deduce:
%\begin{align}\label{eq:intermediate_2bis}
%& t_{k+1}^2(F(\xkk)-F(x^*))+\frac{1}{2\tau'_{k+1}}\|x^*-\xkk-(t_{k+1}-1)(\xkk-\xk)\|_{D_{k+1}}^2 \nonumber \\
%&  \leq t_{k+1}(t_{k+1}-1)(F(\xk)-F(x^*)) +\frac{\omega_{k+1}}{2\tau_{k+1}'}\|x^*-\xk-(t_k-1)(\xk-x^{(k-1)})\|_{D_{k+1}}^2\nonumber\\
%&+t_{k+1}^2\epsilon_{k+1}+t_{k+1}\frac{\sqrt{2\epsilon_{k+1}\tau'_{k+1}}}{\tau'_{k+1}}\|x^*-\xkk-(t_{k+1}-1)(\xkk-\xk)\|_{D_{k+1}}.
%\end{align}
%We can now  on both sides, thus getting:
\begin{align}\label{eq:intermediate_3}
& \tau_{k+1}'t_{k+1}^2(F(\xkk)-F(x^*))+\frac{1}{2}\|x^*-\xkk-(t_{k+1}-1)(\xkk-\xk)\|_{D_{k+1}}^2 \nonumber \\
&  \leq \tau_{k+1}'t_{k+1}(t_{k+1}-1)(F(\xk)-F(x^*)) +\frac{\omega_{k+1}}{2}\|x^*-\xk-(t_k-1)(\xk-x^{(k-1)})\|_{D_{k+1}}^2\nonumber\\
&+\tau_{k+1}'t_{k+1}^2\epsilon_{k+1}+t_{k+1}\sqrt{2\epsilon_{k+1}\tau'_{k+1}}\|x^*-\xkk-(t_{k+1}-1)(\xkk-\xk)\|_{D_{k+1}}.
\end{align}
Combining equation \eqref{eq:choice_tk} with relation $\mu_k/\mu_{k+1}=\eta_{sup}^{k+1}/\eta_{sup}^k$, we deduce from \eqref{eq:intermediate_3} the following:
\begin{align*}
& \tau_{k+1}'t_{k+1}^2(F(\xkk)-F(x^*))+\frac{1}{2}\|x^*-\xkk-(t_{k+1}-1)(\xkk-\xk)\|_{D_{k+1}}^2 \nonumber \\
&  \leq \omega_{k+1}\left(\frac{\eta_{sup}^{k+1}}{\eta_{sup}^k}\tau_k't_k^2(F(\xk)-F(x^*)) +\frac{1}{2}\|x^*-\xk-(t_k-1)(\xk-x^{(k-1)})\|_{D_{k+1}}^2\right)\nonumber\\
&+\tau_{k+1}'t_{k+1}^2\epsilon_{k+1}+t_{k+1}\sqrt{2\epsilon_{k+1}\tau'_{k+1}}\|x^*-\xkk-(t_{k+1}-1)(\xkk-\xk)\|_{D_{k+1}}
\end{align*}
whence the thesis follows by applying Assumption \ref{ass:1}.
\end{proof}

Let us now define the sequence $\{\theta_k\}_{k\in\mathbb{N}}$ by
\begin{equation}\label{eq:theta}
\theta_k:=\frac{\displaystyle\prod_{i=0}^{k}\omega_i}{\tau_k't_k^2}, \quad \forall \ k\geq 0.
\end{equation}
The convergence rate for the function values of the iterates of Algorithm \ref{algo:GFISTA} can be computed by a careful study of the factor $\theta_k$, which decays linearly if $\mu>0$ and quadratically if $\mu=0$. We prove this statement with the following.

\begin{Theorem}\label{thm:rate}
Suppose $F=f+g$ is $\mu-$strongly convex with $\mu=\mu_f+\mu_g\geq 0$, Assumption \ref{ass:1} holds and $1\leq t_0\leq 1/\sqrt{q_0}$. Let $x^*$ be a solution of \eqref{minf}, recall definitions \eqref{eq:tau_prime_omega_} and \eqref{eq:theta}, set $\gamma=\lim_{k\rightarrow\infty}\prod\limits_{i=1}^{k}(1+\gamma_i)<\infty$ and
\begin{equation}\label{eq:OmegaE}
E_1^k=\sum_{i=0}^{k}\sqrt{\theta_{i+1}^{-1}\epsilon_{i+1}}, \qquad E_{2}^k=\sum_{i=0}^{k}\theta_{i+1}^{-1}\epsilon_{i+1}.
\end{equation}
Then, for all $k\geq 0$, the following upper bound for the function values holds:	
\begin{equation}\label{eq:rate}
F(\xkk)-F(x^*)\leq \gamma \theta_{k+1}\left(\sqrt{\frac{\omega_0}{2}}\|x^{(0)}-x^*\|_{D_0}+\sqrt{ \tau_{0}'t_0^2\omega_0\left(F(x^{(0)})-F(x^*)\right)}+2\sqrt{\gamma}E_{1}^k+\sqrt{E_{2}^k}\right)^2.
\end{equation}
Furthermore, the factor $\theta_{k+1}$ can be bounded as follows:
\begin{equation} \label{eq:conv_rate}
\theta_{k+1}\leq 
 \frac{\eta_{sup}}{\eta_{inf}}\cdot \min\left\{\left(1-\sqrt{\frac{\mu\rho}{L_f\eta_{sup}+\mu_g\rho}}\right)^{k+1}\left(\frac{1}{\tau_0}-\mu_{f,0}\right),\frac{4}{(k+2)^2}\sqrt{\frac{L_f\eta_{sup}-\rho\mu_f}{\rho\eta_{sup}}}\right\}. 
\end{equation}
%where the average quantities $\bar{L}_{k+1}$, $\sqrt{\bar{q}_{k+1}}$ are given by
%\begin{equation}   \label{eq:Lbar}
%\sqrt{\bar{L}_{k+1}} := \frac{1}{\frac{1}{k+1} \sum\limits_{i=1}^{k+1}  \frac{1}{\sqrt{L_i - \mu_{f,i}}}},\qquad \sqrt{ \bar{q}_{k+1} }:= \frac{1}{k}\sum_{i=2}^{k+1} \sqrt{ \frac{\mu_i}{L_i + \mu_{g,i}} },
%\end{equation}
%with $L_i:=1/\tau_i$.
%where , and
%\begin{equation*}
%\tilde{q}_{i+1}=
%\begin{cases}
%\frac{1}{\mu_{i+1}\prod_{j=1}^{i+1} (1-\sqrt{q_j})},\quad & \text{if }\mu>0,\\
%\left(\frac{i+1}{\sqrt{\bar{L}_{i+1}}} + \frac{t_1-1}{\sqrt{L_1}}\right)^2,\quad & \text{if }\mu=0.
%\end{cases}
%\end{equation*}
\end{Theorem}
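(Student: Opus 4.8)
The plan is to combine the one-step estimate of Lemma~\ref{lem:fund} into a telescoping argument, to absorb the inexactness through the Schmidt-type Lemma~\ref{key_lemma}, and then to bound the decay coefficient $\theta_{k+1}$ directly from its definition \eqref{eq:theta} and the update rule \eqref{eq:choice_tk}.

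\emph{Step 1 (the energy recursion).} I would set
\[
\mathcal{E}_k:=\tau_k't_k^2\bigl(F(x^{(k)})-F(x^*)\bigr)+\frac12\bigl\|x^*-x^{(k)}-(t_k-1)(x^{(k)}-x^{(k-1)})\bigr\|_{D_k}^2\ge 0 ,
\]
so that, since $x^{(0)}=x^{(-1)}$, $\mathcal{E}_0=\tau_0't_0^2(F(x^{(0)})-F(x^*))+\frac12\|x^{(0)}-x^*\|_{D_0}^2$. Lemma~\ref{lem:fund} is exactly the statement $\mathcal{E}_{k+1}\le\omega_{k+1}(1+\gamma_{k+1})\mathcal{E}_k+\tau_{k+1}'t_{k+1}^2\epsilon_{k+1}+t_{k+1}\sqrt{2\epsilon_{k+1}\tau_{k+1}'}\,N_{k+1}$, with $N_{k+1}$ the $D_{k+1}$-norm occurring inside $\mathcal{E}_{k+1}$. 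Since $x^*$ minimizes $F$, one has $F(x^{(k+1)})\ge F(x^*)$, hence $N_{k+1}\le\sqrt{2\mathcal{E}_{k+1}}$, and the recursion collapses to
\[
\mathcal{E}_{k+1}\le\omega_{k+1}(1+\gamma_{k+1})\mathcal{E}_k+\tau_{k+1}'t_{k+1}^2\epsilon_{k+1}+2t_{k+1}\sqrt{\tau_{k+1}'\epsilon_{k+1}\,\mathcal{E}_{k+1}} .
\]

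\emph{Step 2 (normalization and Schmidt's lemma).} I would divide this inequality by $\prod_{i=1}^{k+1}\omega_i(1+\gamma_i)$, using that $\omega_i\in(0,1]$ (Lemma~\ref{rem:2}), that $\prod_{i=1}^{k+1}(1+\gamma_i)\le\gamma<\infty$ by Assumption~\ref{ass:1}, and the identity $\prod_{i=0}^{k+1}\omega_i=\theta_{k+1}\tau_{k+1}'t_{k+1}^2$ from \eqref{eq:theta} to rewrite the two residual terms in terms of $\theta_{k+1}^{-1}\epsilon_{k+1}$. This produces, for the rescaled sequence $\sigma_k$ (equal to $\mathcal{E}_k$ up to the product factors, so that $\sigma_0$ is a fixed multiple of $\mathcal{E}_0$), an inequality of the form $\sigma_{k+1}\le\sigma_k+\theta_{k+1}^{-1}\epsilon_{k+1}+2\sqrt{\theta_{k+1}^{-1}\epsilon_{k+1}}\sqrt{\sigma_{k+1}}$, together with $F(x^{(k+1)})-F(x^*)\le\gamma\theta_{k+1}\sigma_{k+1}$ (again up to the fixed $\omega_0$-multiple). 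Summing over $0,\dots,k$ telescopes the $\sigma$'s and yields $\sigma_{k+1}\le(\sigma_0+E_2^k)+\sum_{i=1}^{k+1}2\sqrt{\theta_i^{-1}\epsilon_i}\sqrt{\sigma_i}$ with $E_2^k$ as in \eqref{eq:OmegaE}. Lemma~\ref{key_lemma}, applied with $p_i=\sqrt{\sigma_i}$, $q_m=\sigma_0+E_2^{m-1}$ (monotone nondecreasing since $\{E_2^{m-1}\}$ is), and $\lambda_i=2\sqrt{\theta_i^{-1}\epsilon_i}$, then gives $\sqrt{\sigma_{k+1}}\le\sqrt{\sigma_0}+2E_1^k+\sqrt{E_2^k}$ after using $\sqrt{a+b}\le\sqrt a+\sqrt b$. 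Substituting $\sqrt{\sigma_0}$ in terms of $\mathcal{E}_0$ and expanding $\sqrt{\mathcal{E}_0}$ via the same subadditivity of $\sqrt{\cdot}$ into the two summands appearing in \eqref{eq:rate}, and carrying the $\gamma$-factors through the residual sums, yields \eqref{eq:rate}.

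\emph{Step 3 (bound on $\theta_{k+1}$).} From \eqref{eq:theta} and the update \eqref{eq:choice_tk}, together with $\mu_k/\mu_{k+1}=\eta_{sup}^{k+1}/\eta_{sup}^k$, a direct computation gives $\theta_{k+1}=\theta_k\frac{\eta_{sup}^k}{\eta_{sup}^{k+1}}\bigl(1-\frac1{t_{k+1}}\bigr)$ (the $\eta$-ratio being absent when $\mu=0$), hence $\theta_{k+1}=\theta_0\frac{\eta_{sup}^0}{\eta_{sup}^{k+1}}\prod_{i=1}^{k+1}\bigl(1-\frac1{t_i}\bigr)$. One bounds $\eta_{sup}^0/\eta_{sup}^{k+1}\le\eta_{sup}/\eta_{inf}$ and, using $t_0\ge 1$ and $0<1-\tau_0\mu_{f,0}$ (the initialization condition), $\theta_0=\omega_0/(\tau_0't_0^2)\le\frac1{\tau_0}-\mu_{f,0}$. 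It remains to estimate $\prod_{i=1}^{k+1}(1-1/t_i)$ in two complementary ways. First, by Lemma~\ref{rem:2} we have $q_it_i^2\le 1$, hence $1-1/t_i\le 1-\sqrt{q_i}\le 1-\sqrt{q_{\min}}$, where $q_{\min}>0$ is a uniform lower bound on the $q_i$; such a bound follows from a uniform lower bound $\bar\tau>0$ on the step-sizes $\tau_i$, which is in turn guaranteed by the finitely-terminating backtracking loop together with the scaled descent Lemma~\ref{lemmadescent} (the test \eqref{eq:backtracking} is met whenever $\tau_{k+1}$ is below a fixed multiple of $1/L_f$), and inserting this lower bound yields the factor $(1-\sqrt{\rho\mu/(L_f\eta_{sup}+\rho\mu_g)})^{k+1}$; when $\mu=0$ this term is just $1$. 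Second, a classical FISTA-type argument applied to \textsc{Step~3} shows that $t_i$ grows at least linearly — one rewrites \textsc{Step~3} / \eqref{eq:choice_tk} so that $\sqrt{\tau_{k+1}'}t_{k+1}\ge\sqrt{\tau_k'}t_k+\frac12\sqrt{\tau_{k+1}'}$ (using $\omega_{k+1}\le 1$ to reduce the general-$\mu$ case to this), so that $\tau_{k+1}'t_{k+1}^2$ grows at least like $(k+2)^2$; hence $\prod_{i=1}^{k+1}(1-1/t_i)$ is of order $(k+2)^{-2}$, the constant collapsing to $\sqrt{(L_f\eta_{sup}-\rho\mu_f)/(\rho\eta_{sup})}$ once $\bar\tau$ is inserted. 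Taking the minimum of the two estimates gives \eqref{eq:conv_rate}.

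The part I expect to be most delicate is the bookkeeping in Step~3: obtaining a clean, iteration-independent lower bound $\bar\tau>0$ for the step-sizes generated by the \emph{non-monotone} backtracking (whose restart $\tau_{k+1}^0=\tau_k/\delta$ couples successive iterations, so that a straightforward induction only gives $\tau_{k+1}\ge\min\{\tau_0,\rho\eta_{inf}/L_f\}$) and, in the scaled setting, checking that the variable metric — which enters the $t_k$-recursion through the ratios $\eta_{sup}^{k+1}/\eta_{sup}^k$ and the factor $\mu_k/\mu_{k+1}$ — spoils neither the inequality $q_it_i^2\le 1$ nor the linear growth of $t_i$; both rely on Lemma~\ref{rem:2} and Assumption~\ref{ass:1}. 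By comparison, Step~1 is immediate from Lemma~\ref{lem:fund}, and Step~2 is routine once the recursion is cast in the form required by Lemma~\ref{key_lemma}, the only subtlety being the uniform handling of the products $\prod\omega_i$ and $\prod(1+\gamma_i)$.
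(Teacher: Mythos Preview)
Your Steps~1 and~2 are sound and close to the paper's route. One minor difference: the paper keeps the norm term $\|u^{(k+1)}\|_{D_{k+1}}$ separate rather than absorbing it via $N_{k+1}\le\sqrt{2\mathcal{E}_{k+1}}$, applies Lemma~\ref{key_lemma} to the sequence $\Omega_{i+1}^{-1/2}\|u^{(i+1)}\|_{D_{i+1}}$, and then substitutes the resulting bound back into the recursion after discarding the norm term. Your single-pass variant is cleaner, but be aware that the exact placement of $\omega_0$ and $\gamma$ it produces does not literally match \eqref{eq:rate} (you get $\mathcal{E}_0/\omega_0$ rather than $\omega_0\mathcal{E}_0$ under the square root, and no extra $\sqrt\gamma$ on $E_1^k$); these discrepancies are harmless for the purposes of the theorem.

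Your Step~3 for the \emph{linear} rate is correct and coincides with the paper's. The \emph{quadratic} rate, however, has a genuine gap. You claim that ``$t_i$ grows at least linearly'' and propose $\sqrt{\tau_{k+1}'}\,t_{k+1}\ge\sqrt{\tau_k'}\,t_k+\tfrac12\sqrt{\tau_{k+1}'}$, ``using $\omega_{k+1}\le 1$ to reduce the general-$\mu$ case to this''. Both assertions fail when $\mu>0$. By Lemma~\ref{rem:2} one has $t_k\le 1/\sqrt{q_k}$, so $t_k$ is \emph{bounded} and $\sqrt{\tau_k'}\,t_k$ cannot increase by a fixed positive amount at every step. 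Moreover, since \eqref{eq:choice_tk} reads $\tau_{k+1}'t_{k+1}^2=\tau_{k+1}'t_{k+1}+\frac{\mu_k}{\mu_{k+1}}\omega_{k+1}\tau_k't_k^2$, the bound $\omega_{k+1}\le 1$ gives only an \emph{upper} bound on the right-hand side---the wrong direction for lower-bounding $t_{k+1}$.

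The paper's remedy is to track $1/\sqrt{\eta_{sup}^k\theta_k}$ instead of $\sqrt{\tau_k'}\,t_k$. From \eqref{eq:choice_tk} one obtains the identity $\eta_{sup}^{k+1}\theta_{k+1}=\eta_{sup}^k\theta_k\,(1-1/t_{k+1})$, so $\{\eta_{sup}^k\theta_k\}$ is non-increasing; this yields the telescoping bound
\[
\frac{1}{\sqrt{\eta_{sup}^{k+1}\theta_{k+1}}}-\frac{1}{\sqrt{\eta_{sup}^{k}\theta_{k}}}\;\ge\;\frac{1}{2\,t_{k+1}\sqrt{\eta_{sup}^{k+1}\theta_{k+1}}},
\]
and one then shows $t_{k+1}\sqrt{\eta_{sup}^{k+1}\theta_{k+1}}\le\sqrt{\eta_{sup}^{k+1}(1/\tau_{k+1}-\mu_{f,k+1})}$ (here $\omega_i\le 1$ is used, now in the correct direction, since $t_{k+1}\sqrt{\theta_{k+1}}=\sqrt{\Omega_{k+1}/\tau_{k+1}'}$). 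Summing gives $1/\sqrt{\eta_{sup}^k\theta_k}\ge\tfrac12\sum_{i=0}^k 1/\sqrt{\eta_{sup}^i(1/\tau_i-\mu_{f,i})}$, from which the quadratic bound follows after inserting the uniform step-size lower bound. The key point is that the $\omega_i$-factors---precisely what prevent $t_k$ from growing when $\mu>0$---are already built into $\theta_k$, so $1/\sqrt{\theta_k}$ grows linearly even though $t_k$ does not.
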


\begin{proof}
The result is obtained by combining the proof in \cite[Theorem 4.6]{Calatroni-Chambolle-2019} with some arguments employed in \cite[Propositions 2-4]{Schmidt2011}. By using the following short-hand notations
\begin{equation*}
u^{(k)}:=x^*-\xk-(t_k-1)(\xk-x^{(k-1)}),\quad
v_k:=F(\xk)-F(x^*),
\end{equation*}
we have that the inequality in Lemma \ref{lem:fund} can be reformulated in a compact form as
\begin{align}\label{eq:rec}
\tau_{k+1}'t_{k+1}^2v_{k+1}+\frac{1}{2}\|u^{(k+1)}\|_{D_{k+1}}^2 &\leq \omega_{k+1}(1+\gamma_{k+1})\left(\tau_k't_k^2 v_k+\frac{1}{2}\|u^{(k)}\|_{D_{k}}^2\right)\nonumber\\
&+\tau_{k+1}'t_{k+1}^2\epsilon_{k+1}+t_{k+1}\sqrt{2\epsilon_{k+1}\tau'_{k+1}}\|u^{(k+1)}\|_{D_{k+1}}.
\end{align}
By recursively applying \eqref{eq:rec}, we obtain
%Recalling also the definition of $\gamma_{k+1}$ in \eqref{eq:gamma}, the previous inequality can be rewritten as
%\begin{align}
% \tau_{k+1}'t_{k+1}^2v_{k+1}+\frac{1}{2}\|u^{(k+1)}\|_{D_{k+1}}^2 
%&\leq \omega_{k+1}(1+\gamma_{k+1})\left(\tau_k't_k^2 v_k+\frac{1}{2}\|u^{(k)}\|_{D_{k}}^2\right)\nonumber\\
%&+\tau_{k+1}'t_{k+1}^2\epsilon_{k+1}+t_{k+1}\sqrt{2\epsilon_{k+1}\tau'_{k+1}}\|u^{(k+1)}\|_{D_{k+1}}.
%\end{align}
\begin{align}\label{eq:recursive}
\tau_{k+1}'t_{k+1}^2v_{k+1}& +\frac{1}{2}\|u^{(k+1)}\|_{D_{k+1}}^2 \leq \left(\prod_{i=1}^{k+1}\omega_i(1+\gamma_i)\right)\left(\tau'_0t_0^2v_0+\frac{1}{2}\|u^{(0)}\|_{D_0}^2\right)\nonumber\\
&+\sum_{i=0}^k\left(\prod_{j=i+2}^{k+1}\omega_{j}(1+\gamma_j)\right)\left(\tau_{i+1}'t_{i+1}^2\epsilon_{i+1}+t_{i+1}\sqrt{2\epsilon_{i+1}\tau'_{i+1}}\|u^{(i+1)}\|_{D_{i+1}}\right), 
\end{align}
where we used the notation $\prod_{j=i+2}^{k+1}\omega_{j}(1+\gamma_j)=1$ whenever $i>k-1$.\\
Let us define the sequence $\{p_k\}_{k\in \mathbb{N}}$ with elements $p_k=\prod_{i=1}^{k}(1+\gamma_i)$ for all $k\geq 1$. We notice that $p_{k+1}=(1+\gamma_{k+1})p_{k}$. Recalling the summability of the sequence $\{\gamma_k\}_{k\in\mathbb{N}}$ and applying Lemma \ref{key_lemma_2}, it follows that $\{p_k\}_{k\in \mathbb{N}}$ converges. Setting $\gamma =\lim_{k\rightarrow \infty} \prod_{i=1}^{k}(1+\gamma_i)<\infty$, we deduce from equation \eqref{eq:recursive} 
\begin{align*}
\tau_{k+1}'t_{k+1}^2v_{k+1}&+\frac{1}{2}\|u^{(k+1)}\|_{D_{k+1}}^2 \leq \left(\prod_{i=1}^{k+1}\omega_i\right)\left(\gamma\tau'_0t_0^2v_0+\frac{\gamma}{2}\|u^{(0)}\|_{D_0}^2\right)\nonumber\\
&+\sum_{i=0}^k\left(\prod_{j=i+2}^{k+1}\omega_{j}\right)\left(\gamma\tau_{i+1}'t_{i+1}^2\epsilon_{i+1}+\gamma t_{i+1}\sqrt{2\epsilon_{i+1}\tau'_{i+1}}\|u^{(i+1)}\|_{D_{i+1}}\right).
\end{align*}
Let us now define $\Omega_{k+1}:=\prod_{i=0}^{k+1}\omega_i\in (0,1]$. Recalling the definition of $\theta_k$ in \eqref{eq:theta}, we can rewrite the previous inequality as:
\begin{align}\label{eq:recursive_bis}
&\tau_{k+1}'t_{k+1}^2v_{k+1}+\frac{1}{2}\|u^{(k+1)}\|_{D_{k+1}}^2\nonumber\\
&\leq \gamma\Omega_{k+1}\left( \tau'_0t_0^2v_0\omega_0+\frac{\omega_0}{2}\|u^{(0)}\|_{D_0}^2+\sum_{i=0}^k\theta_{i+1}^{-1}\epsilon_{i+1}+ \sqrt{2\theta_{i+1}^{-1}\epsilon_{i+1}}\Omega_{i+1}^{-\frac{1}{2}}\|u^{(i+1)}\|_{D_{i+1}} \right).
\end{align}
We can now proceed as in \cite[Proposition 2]{Schmidt2011}. First, we discard the term $\tau_{k+1}'t_{k+1}^2v_{k+1}$ on the left-hand side, so as to deduce the following inequality
\begin{equation*}
\Omega^{-1}_{k+1}\|u^{(k+1)}\|_{D_{k+1}}^2\leq 2\gamma \left(\frac{\omega_0\|u^{(0)}\|_{D_0}^2}{2}+\tau_0' t_0^2v_0\omega_0+\sum_{i=0}^k\theta_{i+1}^{-1}\epsilon_{i+1}\right)+\sum_{i=0}^k2\gamma\sqrt{2\theta_{i+1}^{-1}\epsilon_{i+1}}\Omega_{i+1}^{-\frac{1}{2}}\|u^{(i+1)}\|_{D_{i+1}}.
\end{equation*}
An application of Lemma \ref{key_lemma} to this relation leads to
\begin{align*}
\Omega^{-\frac{1}{2}}_{k+1}\|u^{(k+1)}\|_{D_{k+1}}&\leq \sum_{i=0}^{k} \gamma \sqrt{2\theta_{i+1}^{-1}\epsilon_{i+1}}\\
&+\left(2\gamma\left(\frac{\omega_0\|u^{(0)}\|_{D_0}^2}{2}+\tau_0't_0^2v_0\omega_0+\sum_{i=0}^k\theta_{i+1}^{-1}\epsilon_{i+1}\right)+\left(\sum_{i=0}^{k}\gamma \sqrt{2\theta_{i+1}^{-1}\epsilon_{i+1}}\right)^2\right)^{\frac{1}{2}}.
\end{align*}
Therefore, by standard inequalities on the square root of the sum of positive terms, we can deduce that for all $j=0,\ldots,k$
\begin{equation}\label{eq:plugin}
\Omega^{-\frac{1}{2}}_{j+1}\|u^{(j+1)}\|_{D_{j+1}}\leq \sqrt{\gamma\omega_0}\|u^{(0)}\|_{D_0}+\sqrt{2\gamma \tau_0' t_0^2v_0\omega_0}+\sqrt{2 \sum_{i=0}^k\gamma\theta_{i+1}^{-1}\epsilon_{i+1}}+2\sum_{i=0}^{k}\gamma \sqrt{2\theta_{i+1}^{-1}\epsilon_{i+1}}.
\end{equation}
We now go back to \eqref{eq:recursive_bis}, discard the term $\frac{1}{2}\|u^{(k+1)}\|_{D_{k+1}}^2$ on the left-hand side, plug the relation \eqref{eq:plugin} and write the following chain of inequalities
\begin{align*}
&\theta_{k+1}^{-1}v_{k+1}\leq \gamma \tau_{0}'t_0^2v_0\omega_0+\frac{\gamma\omega_0}{2}\|u^{(0)}\|_{D_0}^2+\sum_{i=0}^{k}\gamma \theta_{i+1}^{-1}\epsilon_{i+1}\\
&+\left(2\sum_{i=0}^{k}\gamma \sqrt{\theta_{i+1}^{-1}\epsilon_{i+1}}\right)\left( \sqrt{\frac{\gamma\omega_0}{2}}\|u^{(0)}\|_{D_0}+\sqrt{\gamma\tau_0' t_0^2v_0\omega_0}+\sqrt{\sum_{i=0}^k\gamma \theta_{i+1}^{-1}\epsilon_{i+1}}+2\sum_{i=0}^{k}\gamma \sqrt{\theta_{i+1}^{-1}\epsilon_{i+1}}\right)\\
&\leq \left(\sqrt{\frac{\gamma\omega_0}{2}}\|u^{(0)}\|_{D_0}+\sqrt{\gamma\tau_{0}'t_0^2v_0\omega_0}+2\sum_{i=0}^{k}\gamma\sqrt{\theta_{i+1}^{-1}\epsilon_{i+1}}+\sqrt{\sum_{i=0}^k\gamma\theta_{i+1}^{-1}\epsilon_{i+1}}\right)^2.
\end{align*}
We thus deduce
\begin{equation}\label{eq:final}
v_{k+1}\leq \gamma\theta_{k+1}\left(\sqrt{\frac{\omega_0}{2}}\|u^{(0)}\|_{D_0}+\sqrt{\tau_{0}'t_0^2v_0\omega_0}+2\sum_{i=0}^{k}\sqrt{\gamma\theta_{i+1}^{-1}\epsilon_{i+1}}+\sqrt{\sum_{i=0}^k\theta_{i+1}^{-1}\epsilon_{i+1}}\right)^2.
\end{equation}
At this point, proceeding as in \cite[Theorem 4.6]{Calatroni-Chambolle-2019}, we need to provide an upper bound for the factor $\theta_{k}$. To this aim, we introduce the following average quantities:
\begin{equation}   \label{eq:Lbar}
\sqrt{\bar{L}_{k+1}} := \frac{1}{\frac{1}{k+2} \sum\limits_{i=0}^{k+1}  \frac{1}{\sqrt{L_i - \mu_{f,i}}}},\qquad \sqrt{ \bar{q}_{k+1} }:= \frac{1}{k+1}\sum_{i=1}^{k+1} \sqrt{ \frac{\mu_i}{L_i + \mu_{g,i}} },
\end{equation}
where $L_i:=1/\tau_i$.\\
In order to show the quadratic decay of $\theta_k$, we first prove the following inequality:
\begin{equation}\label{eq:induction}
\frac{1}{\sqrt{\eta_{sup}^{k}\theta_{k}}}\geq \frac{1}{2}\sum_{i=0}^{k}\frac{1}{\sqrt{\eta_{sup}^{i}\left(\frac{1}{\tau_{i}}-\mu_{f,i}\right)}}, \quad \forall \ k\geq 0.
\end{equation}
The proof follows by induction: for $k=0$ we have
\begin{align}\label{eq:theta1}
\theta_0=\frac{1-\mu_0\tau_0't_0}{\tau_0't_0^2}=\frac{(1-\mu_0\tau_0't_0)(1+\tau_0\mu_{g,0})}{\tau_0t_0^2} =\frac{1-(t_0-1)\tau_0\mu_{g,0}-\mu_{f,0}\tau_0t_0}{\tau_0 t_0^2}\leq \frac{1}{\tau_0}-\mu_{f,0},
%\frac{\omega_0}{\tau_0't_0^2}
%\frac{1+\tau_0\mu_{g,0}-(\mu_{f,0}+\mu_{g,0})\tau_0t_0}{\tau_0t_0^2}\nonumber
\end{align}
which holds since, by assumption, $t_0\geq 1$. We observe that this is nothing but \eqref{eq:induction} for $k=0$. By induction, suppose now that \eqref{eq:induction} holds for some $k\geq 0$. Then, from \eqref{eq:choice_tk}, we deduce:
\begin{equation}\label{eq:technical_for_rate}
1-\frac{1}{t_{k+1}}=\frac{\mu_k}{\mu_{k+1}}\omega_{k+1}\frac{\tau_k't_k^2}{\tau_{k+1}'t_{k+1}^2}=\frac{\eta_{sup}^{k+1}}{\eta_{sup}^k}\omega_{k+1}\frac{\tau_k't_k^2}{\tau_{k+1}'t_{k+1}^2}=\frac{\eta_{sup}^{k+1}}{\eta_{sup}^k}\frac{\theta_{k+1}}{\theta_k}\leq 1,
\end{equation} 
which implies that the sequence $\{\eta_{sup}^k\theta_k\}_{k\in\mathbb{N}}$ is non-increasing. Therefore, we have
\begin{equation*}
\frac{1}{\sqrt{\eta_{sup}^{k+1}\theta_{k+1}}}-\frac{1}{\sqrt{\eta_{sup}^{k}\theta_{k}}}=\frac{\eta_{sup}^k\theta_k-\eta_{sup}^{k+1}\theta_{k+1}}{\sqrt{\eta_{sup}^k\eta_{sup}^{k+1}\theta_k\theta_{k+1}}\left(\sqrt{\eta_{sup}^k\theta_k}+\sqrt{\eta_{sup}^{k+1}\theta_{k+1}}\right)}\geq \frac{\eta_{sup}^k\theta_k-\eta_{sup}^{k+1}\theta_{k+1}}{2\eta_{sup}^k\theta_k\sqrt{\eta_{sup}^{k+1}\theta_{k+1}}}.
\end{equation*}
By using again \eqref{eq:technical_for_rate}, we further obtain
\begin{equation*}
\frac{1}{\sqrt{\eta_{sup}^{k+1}\theta_{k+1}}}-\frac{1}{\sqrt{\eta_{sup}^{k}\theta_{k}}}\geq \frac{1}{2t_{k+1}\sqrt{\eta_{sup}^{k+1}\theta_{k+1}}}.
\end{equation*}
Now, recalling the definitions of the parameters $\theta_{k+1}$ and $\omega_{k+1}$, we can write the following chain of inequalities:
\begin{align*}
t_{k+1}\sqrt{\eta_{sup}^{k+1}\theta_{k+1}}&=\sqrt{\frac{\eta_{sup}^{k+1}}{\tau_{k+1}'}}\prod_{i=0}^{k+1}\sqrt{\omega_i} \leq\sqrt{\frac{\eta_{sup}^{k+1}\omega_{k+1}}{\tau_{k+1}'}}=\sqrt{\eta_{sup}^{k+1}\left(\frac{1}{\tau_{k+1}'}-\mu_{k+1}t_{k+1}\right)}\\
&\leq\sqrt{\eta_{sup}^{k+1}\left(\frac{1}{\tau_{k+1}'}-\mu_{k+1}\right)}=\sqrt{\eta_{sup}^{k+1}\left(\frac{1}{\tau_{k+1}}-\mu_{f,k+1}\right)}.
\end{align*}
Thus, we get
\begin{equation*}
\frac{1}{\sqrt{\eta_{sup}^{k+1}\theta_{k+1}}}-\frac{1}{\sqrt{\eta_{sup}^{k}\theta_{k}}}\geq \frac{1}{2\sqrt{\eta_{sup}^{k+1}\left(\frac{1}{\tau_{k+1}}-\mu_{f,k+1}\right)}}.
\end{equation*}
By induction, the previous inequality yields \eqref{eq:induction} with $k=k+1$.
%If we recursively apply the previous inequality, we obtain
%\begin{equation*}
%\frac{1}{\sqrt{\eta_{sup}^{k+1}\theta_{k+1}}}\geq \frac{1}{\sqrt{\eta_{sup}^1\theta_1}}+\frac{1}{2}\sum_{i=2}^{k+1}\frac{1}{\sqrt{\eta_{sup}^{i}\left(\frac{1}{\tau_{i}}-\mu_{f,i}\right)}}.
%\end{equation*}
%Furthermore, we can treat the quantity $\theta_1$ as follows:
Recalling the definition of $\bar{L}_{k+1}$ and observing that $\eta_{inf}\leq \eta_{sup}^i\leq\eta_{sup}$ for $i=0,\ldots,k$, we get
\begin{equation}\label{eq:quad_rate}
\sqrt{\theta_{k+1}}\leq \frac{2}{k+2}\sqrt{\frac{\eta_{sup}}{\eta_{inf}}\bar{L}_{k+1}}, \quad \forall \ k\geq 0.
\end{equation}
To get the linear rate, we use \eqref{eq:technical_for_rate} and \eqref{eq:qt} and deduce the following relations:
\begin{align}\label{eq:lin_rate}
\theta_{k+1}&=\theta_0\prod_{i=1}^{k+1}\frac{\eta_{sup}^{i-1}}{\eta_{sup}^{i}}\left(1-\frac{1}{t_{i}}\right) \leq\frac{\eta_{sup}^0\theta_0}{\eta_{sup}^{k+1}}\prod_{i=1}^{k+1}\left(1-\sqrt{q_{i}}\right)\nonumber\\
&=\frac{\eta_{sup}^0\theta_0}{\eta_{sup}^{k+1}}\prod_{i=1}^{k+1}\left(1-\sqrt{\frac{\mu_{i}}{L_i+\mu_{g,i}}}\right) \leq \frac{\eta_{sup}}{\eta_{inf}}(L_0-\mu_{f,0})\left(1-\sqrt{\bar{q}_{k+1}}\right)^{k+1},
\end{align}
where the last inequality follows from the concavity of the logarithmic function, the definition of $\bar{q}_{k+1}$, \eqref{eq:theta1} and the fact that $\eta_{inf}\leq \eta^{i}_{sup}\leq \eta_{sup} $ for $i=0,\ldots,k+1$.\\
Finally, we note that the averaging term $\bar{L}_{k+1}$ appearing in \eqref{eq:quad_rate} is always smaller than the actual average of the terms $(L_i-\mu_{f,i})$, since:
\begin{equation}   \label{ineq:mean}
 \sqrt{\bar{L}_k}  \leq \frac{1}{k+1} \sum_{i=0}^k \sqrt{L_i - \mu_{f,i}}  \leq \sqrt{ \frac{1}{k+1}\sum_{i=0}^k (L_i - \mu_{f,i})  }.
\end{equation}
Then, since the use of the backtracking strategy at any iteration implies that $\tau_k\geq \rho/L_f$, we can deduce the following uniform bounds for the averaging terms:
$$
\sqrt{\bar{L}_{k+1}}\leq \sqrt{\frac{L_f}{\rho}-\frac{\mu_f}{\eta_{sup}}},\qquad\qquad \sqrt{\bar{q}_{k+1}}\geq \sqrt{\frac{\mu\rho}{L_f\eta_{sup}+\mu_g\rho}},
$$
by which \eqref{eq:conv_rate} follows.
\end{proof}

\begin{Remark}
By \eqref{eq:quad_rate}-\eqref{eq:lin_rate}, we deduce that the factor $\theta_{k+1}$ can be estimated just in terms of the average quantities $\overline{L}_{k+1}$ and $\overline{q}_{k+1}$ defined in \eqref{eq:Lbar}, thus avoiding the dependence on the (possibly unknown) value $L_f$. In this case, note that the storage of all the values $L_i$ computed along the iterations by backtracking strategy is required.
\end{Remark}
The boundedness of the sequences $E_1^k$ and $E_2^k$ can be guaranteed by carefully choosing the error parameters $\epsilon_{k+1}$ controlling the accuracy of the inexact proximal evaluations. If $\mu>0$ the following corollary provides a sufficient condition for this choice.

\begin{corollary}\label{cor:guarantee1}
Suppose that $F=f+g$ is $\mu-$strongly convex with $\mu>0$ and let $x^*$ be the solution of \eqref{minf}. Suppose that Assumption \ref{ass:1} holds and that the sequence of errors $\{\epsilon_k\}_{k\in\mathbb{N}}$ is chosen as
\begin{equation}\label{eq:adaptive_epsilon}
\epsilon_{k+1}=\theta_{k+1}a_{k+1},
\end{equation}
where $\{\theta_k\}_{k\in\mathbb{N}}$ is defined as in \eqref{eq:theta} and $\{a_k\}_{k\in\mathbb{N}}$ is a sequence of nonnegative numbers such that $\sum_{k=0}^{\infty}\sqrt{a_k}<\infty$. Then, for all sufficiently large $k$, we have
\begin{equation*}
F(x^{(k+1)})-F(x^*)=\mathcal{O}\left(\left(1-\sqrt{\frac{\mu\rho}{L_f\eta_{sup}+\mu_g\rho}}\right)^{k+1}\right).
\end{equation*} 
\end{corollary}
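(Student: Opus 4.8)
The plan is to specialize the general error estimate \eqref{eq:rate} of Theorem \ref{thm:rate} to the prescribed sequence $\epsilon_{k+1}=\theta_{k+1}a_{k+1}$ and then to keep only the linear branch of the bound \eqref{eq:conv_rate} on the factor $\theta_{k+1}$. The first step is to substitute $\epsilon_{i+1}=\theta_{i+1}a_{i+1}$ into the error accumulators of \eqref{eq:OmegaE}: since $\theta_{i+1}>0$ for every $i$, this yields $\theta_{i+1}^{-1}\epsilon_{i+1}=a_{i+1}$, hence $E_1^k=\sum_{i=0}^{k}\sqrt{a_{i+1}}$ and $E_2^k=\sum_{i=0}^{k}a_{i+1}$.

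Next I would show that these two sequences are bounded uniformly in $k$. The assumption $\sum_{k}\sqrt{a_k}<\infty$ forces $\sqrt{a_k}\to 0$, so there is an index $K$ with $a_k\leq \sqrt{a_k}$ for all $k\geq K$; splitting off the finitely many initial terms then gives $\sum_{k}a_k<\infty$ as well. Consequently, for every $k$ one has $E_1^k\leq E_1^\infty:=\sum_{k\geq 1}\sqrt{a_k}$ and $E_2^k\leq E_2^\infty:=\sum_{k\geq 1}a_k$, with $E_1^\infty$ and $E_2^\infty$ finite and independent of $k$. Inserting these bounds into \eqref{eq:rate}, its right-hand side becomes $\gamma C^2\,\theta_{k+1}$ with
\[
C:=\sqrt{\tfrac{\omega_0}{2}}\,\|x^{(0)}-x^*\|_{D_0}+\sqrt{\tau_0't_0^2\omega_0\big(F(x^{(0)})-F(x^*)\big)}+2\sqrt{\gamma}\,E_1^\infty+\sqrt{E_2^\infty}
\]
independent of $k$, so that $F(x^{(k+1)})-F(x^*)\leq \gamma C^2\,\theta_{k+1}$ for all $k\geq 0$.

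The last step is to invoke \eqref{eq:conv_rate} and retain only the first argument of the minimum, i.e. $\theta_{k+1}\leq \tfrac{\eta_{sup}}{\eta_{inf}}\big(\tfrac{1}{\tau_0}-\mu_{f,0}\big)\big(1-\sqrt{\mu\rho/(L_f\eta_{sup}+\mu_g\rho)}\big)^{k+1}$. Combined with the previous bound this gives
\[
F(x^{(k+1)})-F(x^*)\leq \gamma C^2\,\frac{\eta_{sup}}{\eta_{inf}}\Big(\frac{1}{\tau_0}-\mu_{f,0}\Big)\Big(1-\sqrt{\frac{\mu\rho}{L_f\eta_{sup}+\mu_g\rho}}\Big)^{k+1},
\]
which is precisely the claimed $\mathcal{O}\big((1-\sqrt{\mu\rho/(L_f\eta_{sup}+\mu_g\rho)})^{k+1}\big)$ rate. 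Since the constant here is uniform in $k$, the estimate in fact holds for every $k\geq 0$; the mention of ``sufficiently large $k$'' in the statement only serves to absorb the finitely many indices below $K$ at which $a_k$ might exceed $1$.

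I do not expect a genuine obstacle: the corollary is a direct specialization of Theorem \ref{thm:rate}, with $\mu>0$ used solely to activate the linear branch of \eqref{eq:conv_rate}. The only point that requires a little care is the uniform-in-$k$ control of the bracketed factor in \eqref{eq:rate}: one has to notice that $E_1^k$ and $E_2^k$ are partial sums of convergent nonnegative series — which is exactly why $\sum_k a_k<\infty$ must first be deduced from $\sum_k\sqrt{a_k}<\infty$ — so that the multiplicative constant in front of $\theta_{k+1}$ stays bounded and the linear decay of $\theta_{k+1}$ is preserved.
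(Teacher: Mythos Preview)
Your argument is correct and is exactly the specialization of Theorem~\ref{thm:rate} that the paper intends: the paper states Corollary~\ref{cor:guarantee1} without proof, treating it as an immediate consequence of \eqref{eq:rate}--\eqref{eq:conv_rate}, and your write-up supplies precisely those details (substituting $\epsilon_{i+1}=\theta_{i+1}a_{i+1}$ into $E_1^k,E_2^k$, bounding them uniformly via $\sum\sqrt{a_k}<\infty\Rightarrow\sum a_k<\infty$, and then invoking the linear branch of \eqref{eq:conv_rate}). Your final remark is slightly overcomplicated: since $C$ already uses the full tails $E_1^\infty,E_2^\infty$, the bound holds for every $k\geq 0$ without any reference to the auxiliary index $K$.
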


Note that the factor $\theta_{k+1}$ depends on $q_{k+1}$ and $t_{k+1}$, which are computed at {\sc STEP 2} and {\sc STEP 3} of Algorithm \ref{algo:GFISTA}. Therefore, in order to ensure the linear rate, the parameter $\epsilon_{k+1}$ needs to be recomputed according to \eqref{eq:adaptive_epsilon} at each trial step of the backtracking procedure. 
%Thanks to Lemma \ref{lemmadescent}, the backtracking procedure is still well-defined, i.e. it terminates in a finite number of steps.

\medskip

In the particular case $\mu=\mu_g>0$, we are able to pre-identify a sequence of error parameters that guarantees a linear rate without requiring any inner update, as shown below.

\begin{corollary}\label{cor:guarantee2}
Suppose that $F=f+g$ is $\mu-$strongly convex with $\mu=\mu_g>0$ and let $x^*$ be the solution of \eqref{minf}. Suppose that Assumption \ref{ass:1} holds and that the sequence of errors $\{\epsilon_k\}_{k\in\mathbb{N}}$ is chosen as
\begin{equation}\label{eq:prefixed2}
\epsilon_{k+1}=\mathcal{O}((ab^{k})^{k+1}),
\end{equation}
where $a< \frac{\delta}{2}\min\{1,\eta_{inf}/(\tau_0\mu_g)\}$ and $b< \sqrt{\delta}$. Then, for all sufficiently large $k$, we have
\begin{equation*}
F(x^{(k+1)})-F(x^*)=\mathcal{O}\left(\left(1-\sqrt{\frac{\mu_g\rho}{L_f\eta_{sup}+\mu_g\rho}}\right)^{k+1}\right).
\end{equation*} 
\end{corollary}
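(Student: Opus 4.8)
The plan is to read the statement off the master estimate \eqref{eq:rate} of Theorem \ref{thm:rate}, once one knows that the error sums $E_1^k,E_2^k$ of \eqref{eq:OmegaE} stay bounded. Since $\mu=\mu_g>0$ forces $\mu_f=0$, we have $\mu_{f,0}=0$, so \eqref{eq:conv_rate} specializes to $\theta_{k+1}\le\frac{\eta_{sup}}{\eta_{inf}\tau_0}\big(1-\sqrt{\mu_g\rho/(L_f\eta_{sup}+\mu_g\rho)}\big)^{k+1}$; hence, as soon as $\sup_kE_1^k<\infty$ and $\sup_kE_2^k<\infty$, inequality \eqref{eq:rate} gives exactly $F(x^{(k+1)})-F(x^*)=\mathcal{O}\big((1-\sqrt{\mu_g\rho/(L_f\eta_{sup}+\mu_g\rho)})^{k+1}\big)$. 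Both $E_1^k$ and $E_2^k$ are governed by the single series $\sum_k\sqrt{\theta_{k+1}^{-1}\epsilon_{k+1}}$: boundedness of $E_1^k$ is its convergence, and since its summands then tend to $0$ they are eventually $\le1$, so $\theta_{k+1}^{-1}\epsilon_{k+1}\le\sqrt{\theta_{k+1}^{-1}\epsilon_{k+1}}$ for large $k$ and $E_2^k$ is bounded too. Everything thus reduces to proving $\sum_{k=0}^\infty\sqrt{\theta_{k+1}^{-1}\epsilon_{k+1}}<\infty$; here the restriction $\mu=\mu_g$ is what makes an offline choice of $\{\epsilon_k\}$ possible, because with $\mu_f=0$ one has $\mu_k=\mu_{g,k}$, so $q_k=\tau_k\mu_{g,k}/(1+\tau_k\mu_{g,k})$ is a monotone function of $\tau_k$ alone, and an a priori envelope for $\tau_k$ translates into explicit bounds for all quantities entering $\theta_k$ in \eqref{eq:theta}.

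The core step is an a priori, run-independent, upper bound on $\theta_{k+1}^{-1}=\tau_{k+1}'t_{k+1}^2\big/\prod_{i=0}^{k+1}\omega_i$, which I would assemble from three ingredients available in the excerpt. First, $\tau_{\min}\le\tau_k\le\tau_0\delta^{-k}$: the upper bound because the tentative step at \textsc{Step~1} is $\tau_k/\delta$ and the inner loop only shrinks it; the lower bound $\tau_k\ge\tau_{\min}>0$ because the backtracking always terminates at a value bounded below by a fixed constant (as used in the proof of Theorem \ref{thm:rate}). Second, by Lemma \ref{rem:2}, $t_k^2\le q_k^{-1}=1+(\tau_k\mu_{g,k})^{-1}$, which with $\tau_k\ge\tau_{\min}$ and $\mu_{g,k}\ge\mu_g/\eta_{sup}$ bounds $t_k^2$ by a constant, while $\tau_k'=\tau_k/(1+\tau_k\mu_{g,k})<1/\mu_{g,k}\le\eta_{sup}/\mu_g$; hence $\tau_{k+1}'t_{k+1}^2\le C_1$ for a constant $C_1$ depending only on $\eta_{inf},\eta_{sup},\mu_g,\rho,L_f$. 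Third, using $t_i\le q_i^{-1/2}$ (Lemma \ref{rem:2}),
\[
\omega_i=1-t_iq_i\ \ge\ 1-\sqrt{q_i}\ \ge\ \tfrac12(1-q_i)=\frac{1}{2(1+\tau_i\mu_{g,i})}\ \ge\ \frac{\delta^i}{2(1+\tau_0\mu_g/\eta_{inf})},
\]
where the last inequality uses $\tau_i\le\tau_0\delta^{-i}$, $\eta_{sup}^i\ge\eta_{inf}$ and $\delta^i\le1$. Multiplying over $i=0,\dots,k+1$ and writing $M:=2(1+\tau_0\mu_g/\eta_{inf})$ gives $\prod_{i=0}^{k+1}\omega_i\ge M^{-(k+2)}\delta^{(k+1)(k+2)/2}$, whence
\[
\theta_{k+1}^{-1}\ \le\ C_1\,M^{k+2}\,\delta^{-(k+1)(k+2)/2}.
\]

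It remains to plug in $\epsilon_{k+1}=\mathcal{O}((ab^k)^{k+1})$ from \eqref{eq:prefixed2}: up to the fixed factor $C_1M$ one gets $\theta_{k+1}^{-1}\epsilon_{k+1}=\mathcal{O}\big(\big[\tfrac{Ma}{\delta}(b/\sqrt\delta)^{k}\big]^{k+1}\big)$. The hypothesis $b<\sqrt\delta$ makes $(b/\sqrt\delta)^{k}\to0$, and the hypothesis $a<\tfrac\delta2\min\{1,\eta_{inf}/(\tau_0\mu_g)\}$ is exactly what keeps the leading coefficient $\tfrac{Ma}{\delta}$ below the fixed number $(1+\tau_0\mu_g/\eta_{inf})\min\{1,\eta_{inf}/(\tau_0\mu_g)\}\le2$; therefore $\theta_{k+1}^{-1}\epsilon_{k+1}\le C_1M\big[2(b/\sqrt\delta)^k\big]^{k+1}$ for all $k$, which decays super-geometrically. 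Hence $\sum_k\sqrt{\theta_{k+1}^{-1}\epsilon_{k+1}}<\infty$, so $\sup_kE_1^k<\infty$, $\sup_kE_2^k<\infty$, and \eqref{eq:rate} closes the argument. The main obstacle is precisely the middle step — establishing $\theta_{k+1}^{-1}\le C_1M^{k+2}\delta^{-(k+1)(k+2)/2}$ with exactly this super-geometric growth in $k$ and this dependence on $\delta$ and $\tau_0\mu_g/\eta_{inf}$ — since it is this growth rate that pins down the admissible ranges $b<\sqrt\delta$ and $a<\tfrac\delta2\min\{1,\eta_{inf}/(\tau_0\mu_g)\}$.
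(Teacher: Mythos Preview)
Your proof is correct and follows essentially the same route as the paper: both bound $\theta_{k+1}^{-1}$ by using $q_jt_j\le\sqrt{q_j}$ to reach $\omega_j^{-1}\le(1-\sqrt{q_j})^{-1}\le 2(1+\tau_j\mu_{g,j})$, then apply $\tau_j\le\tau_0\delta^{-j}$ to obtain a bound of the form $C\cdot A^{k+1}\delta^{-(k+1)(k+2)/2}$, and finally combine with $\epsilon_{k+1}=\mathcal O((ab^k)^{k+1})$ and Theorem~\ref{thm:rate}. The only cosmetic differences are that you take a small detour through $\tau_{\min}$ to bound $t_{k+1}^2$ (the paper simply uses $\tau_{k+1}'t_{k+1}^2=q_{k+1}t_{k+1}^2/\mu_{k+1}\le 1/\mu_{k+1}$ from Lemma~\ref{rem:2}), and your constant $M=2(1+\tau_0\mu_g/\eta_{inf})$ is slightly larger than the paper's $2\max\{1,\tau_0\mu_g/\eta_{inf}\}$, which is why you end up with $Ma/\delta\le 2$ rather than $<1$ --- harmless, since the super-geometric factor $(b/\sqrt\delta)^{k(k+1)}$ still forces summability.
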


\begin{proof}
Let us provide an upper bound for the quantity $\theta_{i+1}^{-1}$ for $i=0,\ldots,k$. Recalling the relation $\tau_{i+1}'=q_{i+1}/\mu_{i+1}$ and the fact that $q_{i+1}t_{i+1}^2\leq 1$ for all $i$ (see Lemma \ref{rem:2}), we have
\begin{align*}
\theta_{i+1}^{-1}= \frac{q_{i+1}t_{i+1}^2}{\mu_{i+1}\prod\limits_{j=1}^{i+1} (1-q_jt_j)}&\leq \frac{1}{\mu_{i+1}}\prod_{j=1}^{i+1}\frac{1}{1-\sqrt{q_j}}=\frac{1}{\mu_{g,i+1}}\prod_{j=1}^{i+1}\sqrt{1+\tau_j\mu_{g,j}}(\sqrt{1+\tau_j\mu_{g,j}}+\sqrt{\tau_j\mu_{g,j}}),
\end{align*}
where the second equality follows from {\sc STEP 2} of Algorithm \ref{algo:GFISTA} and the assumption $\mu=\mu_g$. Note that we can recursively apply the inequality $\tau_j\leq \tau_{j-1}/\delta$ so as to obtain 
\begin{equation}\label{eq:tau}
\tau_j\leq\tau_0\left(\frac{1}{\delta}\right)^{j}, \quad j\geq 0.
\end{equation}
Then, we can continue as follows:
\begin{align*}
\theta_{i+1}^{-1} \leq\frac{2^{i+1}}{\mu_{g,i+1}}\prod_{j=1}^{i+1}1+\tau_j\mu_{g,j}&\leq \frac{\eta_{sup}(2\max\{1,(\tau_0\mu_g)/\eta_{inf}\})^{i+1}}{\mu_g}\prod_{j=1}^{i+1}\left(\frac{1}{\delta}\right)^{j}\\
&=\frac{\eta_{sup}(2\max\{1,(\tau_0\mu_g)/\eta_{inf}\})^{i+1}}{\mu_g}\left(\frac{1}{\sqrt{\delta}}\right)^{(i+2)(i+1)},
\end{align*}
where the second inequality follows from \eqref{eq:tau}, the definition of $\mu_{g,i+1}$ and $\eta_{inf}\leq \eta_{sup}^j\leq \eta_{sup}$. Then, the thesis follows by applying the above inequality to \eqref{eq:rate} in Theorem \ref{thm:rate}.
\end{proof} 
We remark that condition \eqref{eq:prefixed2} is more severe when $\delta<1$, i.e. whenever an adaptive backtracking procedure is considered. 
%This is the price for allowing larger step-sizes in the backtracking procedure.

\medskip

If $\mu=0$, the SAGE-FISTA Algorithm \ref{algo:GFISTA} reduces to an inexact variable metric version of FISTA. Consequently, Theorem \ref{thm:rate} recovers the well-known $\mathcal{O}(1/k^2)$ convergence rate result available for FISTA and for its several inexact variants in the convex case \cite{Beck-Teboulle-2009b,Schmidt2011,Bonettini2018a}. Similarly to the case $\mu=\mu_g$, we can devise some prefixed rule for computing the errors $\epsilon_{k+1}$ in order to get the expected quadratic rate.

\begin{corollary}\label{cor:guarantee3}
Suppose that $F=f+g$ is $\mu-$strongly convex with $\mu=0$ and let $x^*$ be a solution of \eqref{minf}. Suppose Assumption \ref{ass:1} holds and the sequence of errors $\{\epsilon_k\}_{k\in\mathbb{N}}$ is chosen as
\begin{equation}\label{eq:prefixed3}
\epsilon_{k+1}=\begin{cases}
\mathcal{O}(a^{k+1}), \quad &\text{if } \ \delta <1\\
\displaystyle \frac{b_{k+1}}{(k+1+t_0)^2}, \quad &\text{if } \ \delta =1
\end{cases},
\end{equation}
where $a<\delta$ and $\{b_k\}_{k\in\mathbb{N}}$ is a sequence of nonnegative numbers such that $\sum_{k=0}^{\infty}\sqrt{b_k}<\infty$. Then, for all $k\geq 0$, we have
\begin{equation*}
F(x^{(k+1)})-F(x^*)=\mathcal{O}\left(\frac{1}{(k+1)^2}\right).
\end{equation*} 
\end{corollary}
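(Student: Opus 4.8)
The plan is to specialize Theorem \ref{thm:rate} to $\mu=0$ and then check that, for the prescribed choice of $\{\epsilon_k\}$, the error accumulators $E_1^k,E_2^k$ in \eqref{eq:OmegaE} stay bounded uniformly in $k$. When $\mu=\mu_f+\mu_g=0$ one has $\mu_f=\mu_g=0$, hence $\mu_{f,k}=\mu_{g,k}=\mu_k=0$, $q_k=0$, $\omega_k=1$, $\tau_k'=\tau_k$, and $\prod_{i=0}^k\omega_i=1$, so $\theta_k=1/(\tau_k t_k^2)$, while $\gamma=\lim_k\prod_{i=1}^k(1+\gamma_i)<\infty$ by Assumption \ref{ass:1}. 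Setting $\mu=0$ in \eqref{eq:conv_rate} annihilates the linear term and leaves $\theta_{k+1}\leq\frac{\eta_{sup}}{\eta_{inf}}\cdot\frac{4}{(k+2)^2}\sqrt{L_f/\rho}=\mathcal{O}(1/(k+1)^2)$. Consequently, by \eqref{eq:rate}, it suffices to prove that $E_1^k$ and $E_2^k$ are bounded by constants independent of $k$, since then $F(x^{(k+1)})-F(x^*)\leq\gamma\theta_{k+1}(\mathrm{const})^2=\mathcal{O}(1/(k+1)^2)$.

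The core estimate is an upper bound on $\theta_{i+1}^{-1}=\tau_{i+1}t_{i+1}^2$. From \eqref{eq:choice_tk} with $\mu=0$ we have $\tau_{k+1}t_{k+1}(t_{k+1}-1)=\tau_k t_k^2$, i.e. $\tau_{k+1}t_{k+1}^2=\tau_{k+1}t_{k+1}+\tau_k t_k^2$. Writing $s_k:=\sqrt{\theta_k^{-1}}=\sqrt{\tau_k}\,t_k$, this reads $s_{k+1}^2=\sqrt{\tau_{k+1}}\,s_{k+1}+s_k^2$, whose positive root satisfies $s_{k+1}=\frac{1}{2}\big(\sqrt{\tau_{k+1}}+\sqrt{\tau_{k+1}+4s_k^2}\big)\leq\sqrt{\tau_{k+1}}+s_k$; telescoping from $s_0=\sqrt{\tau_0}\,t_0$ yields $s_{k+1}\leq\sqrt{\tau_0}\,t_0+\sum_{i=1}^{k+1}\sqrt{\tau_i}$.

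Now split on $\delta$. If $\delta=1$, the step-sizes are non-increasing ($\tau_{k+1}=\rho^i\tau_k\leq\tau_k\leq\tau_0$), so $s_{k+1}\leq\sqrt{\tau_0}(k+1+t_0)$, hence $\theta_{k+1}^{-1}\leq\tau_0(k+1+t_0)^2$; with $\epsilon_{i+1}=b_{i+1}/(i+1+t_0)^2$ this gives $\theta_{i+1}^{-1}\epsilon_{i+1}\leq\tau_0 b_{i+1}$, so $E_1^k\leq\sqrt{\tau_0}\sum_{i\geq1}\sqrt{b_i}<\infty$, and since $\sum\sqrt{b_i}<\infty$ forces $\sum b_i<\infty$ (tail comparison), also $E_2^k\leq\tau_0\sum_{i\geq1}b_i<\infty$. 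If $\delta<1$, \eqref{eq:tau} gives $\tau_i\leq\tau_0\delta^{-i}$, so $\sum_{i=1}^{k+1}\sqrt{\tau_i}=\mathcal{O}(\delta^{-(k+1)/2})$ and therefore $\theta_{k+1}^{-1}=s_{k+1}^2=\mathcal{O}(\delta^{-(k+1)})$; combined with $\epsilon_{i+1}=\mathcal{O}(a^{i+1})$ this yields $\theta_{i+1}^{-1}\epsilon_{i+1}=\mathcal{O}((a/\delta)^{i+1})$ with $a/\delta<1$, so $E_1^k$ and $E_2^k$ are bounded by convergent geometric series. In either case the bounds on $E_1^k,E_2^k$ are uniform in $k$, and plugging them together with $\theta_{k+1}=\mathcal{O}(1/(k+1)^2)$ into \eqref{eq:rate} gives $F(x^{(k+1)})-F(x^*)=\mathcal{O}(1/(k+1)^2)$ for all $k\geq0$.

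The main obstacle is the quantitative control of $\theta_{i+1}^{-1}$ in the adaptive regime $\delta<1$: since the tentative step-size is inflated by $1/\delta$ at every outer iteration, $\tau_i$ — and hence $\theta_i^{-1}$ — may grow geometrically, and one must verify that the decay $a^{i+1}$ of the errors outpaces this growth, which is precisely why the threshold $a<\delta$ is imposed. The clean telescoping inequality $s_{k+1}\leq\sqrt{\tau_{k+1}}+s_k$ is what makes the matching of exponents transparent; the remaining arguments are routine bookkeeping with constants already available from the proof of Theorem \ref{thm:rate}.
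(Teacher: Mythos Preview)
Your proof is correct and follows essentially the same approach as the paper. Both arguments reduce to the telescoping bound $\sqrt{\theta_{k+1}^{-1}}\leq\sqrt{\tau_0}\,t_0+\sum_{j=1}^{k+1}\sqrt{\tau_j}$ and then split on $\delta$ using $\tau_j\leq\tau_0\delta^{-j}$; the only cosmetic difference is that you obtain the one-step inequality $s_{k+1}\leq\sqrt{\tau_{k+1}}+s_k$ by solving the quadratic $s_{k+1}^2=\sqrt{\tau_{k+1}}\,s_{k+1}+s_k^2$, whereas the paper first bounds $t_{j+1}\leq 1+t_j\sqrt{\tau_j/\tau_{j+1}}$ and then multiplies through by $\sqrt{\tau_{j+1}}$, which is the same computation in disguise.
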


\begin{proof}
If $\mu=0$, we can majorize the quantity $\sqrt{\theta_{i+1}^{-1}}$ as follows
\begin{align}\label{eq:final_plug_2}
\sqrt{\theta_{i+1}^{-1}}= \sqrt{\tau_{i+1}}t_{i+1} \leq \sqrt{\tau_{i+1}} \left( 1+t_i\sqrt{\frac{\tau_i}{\tau_{i+1}}}\right)  = \sqrt{\tau_{i+1}}+ t_i \sqrt{\tau_i}\leq \ldots \leq \sum_{j=1}^{i+1} \sqrt{\tau_j} + t_0\sqrt{\tau_0}, 
% \leq \frac{1}{L_i}\left(1+\sum_{j=1}^i \rho^{j/2} +t_1 \sqrt{\frac{\tau_1}{\tau_0}} \right) \leq \frac{1}{L_i}\left(1+ i +t_1 \sqrt{\frac{\tau_1}{\tau_0}} \right),
\end{align}
which follows by applying recursively the property:
\[
t_{j+1}\leq 1+ t_j \sqrt{\frac{\tau_j}{\tau_{j+1}}},\quad j=0,\ldots, i.
\]
Combining \eqref{eq:final_plug_2} with \eqref{eq:tau} leads to
\begin{align*}
\theta_{i+1}^{-1}\leq \left(\sum_{j=1}^{i+1} \sqrt{\tau_0}\left(\frac{1}{\sqrt{\delta}}\right)^{j} + t_0\sqrt{\tau_0}\right)^2  =\displaystyle\begin{cases}
\tau_0\left(\frac{(1/\sqrt{\delta})^{i+2}-(1/\sqrt{\delta})}{(1/\sqrt{\delta})-1}+t_0\right)^2, \quad &\text{if }\delta<1,\\
\tau_0\left(i+1+t_0\right)^2, \quad& \text{if }\delta=1.
\end{cases}
\end{align*}
Using this upper bound in \eqref{eq:rate}, we get the thesis.
\end{proof}

Note that when $\delta=1$ condition \eqref{eq:prefixed3} is analogous to the one imposed in \cite[Corollary 3.1]{Bonettini2018a} to get the quadratic rate for an inexact, variable metric version of the FISTA algorithm. When $\delta<1$, \eqref{eq:prefixed3} is more restrictive, as it requires the errors to decay at a linear rate.

\medskip

Finally, we consider the case where no backtracking is performed, i.e. the step-size $\tau_0$ is chosen in order to satisfy the descent Lemma \ref{lemmadescent} and $\delta=1$.

\begin{corollary}\label{cor:guarantee4}
Suppose that $F=f+g$ is $\mu-$strongly convex with $\mu>0$ and let $x^*$ be a solution of \eqref{minf}. Suppose Assumption \ref{ass:1} holds, $\delta=1$ and $0<\tau_0< \frac{\eta_{inf}}{L_f}$. Define the parameter $ 0 < q <1$ as $q=(\tau_0\mu)/(\eta_{inf}+\tau_0\mu_g)$, choose $a<1-\sqrt{q}$ and the sequence of errors $\{\epsilon_k\}_{k\in\mathbb{N}}$ as
\begin{equation}\label{eq:prefixed4}
\epsilon_{k+1}=\mathcal{O}(a^{k+1}).
\end{equation}
Then, for all $k\geq 0$, we have $\tau_k= \tau_0$ and 
\begin{equation*}
F(x^{(k+1)})-F(x^*)=\mathcal{O}\left(\left(1-\sqrt{\frac{\tau_0\mu}{\eta_{sup}+\tau_0\mu_g}}\right)^{k+1}\right).
\end{equation*} 
\end{corollary}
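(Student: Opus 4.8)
The plan is to specialize Theorem~\ref{thm:rate} to this frozen-step-size regime. The first point to settle is that backtracking is never activated. Since $\delta=1$, the trial step-size at iteration $k$ is $\tau_{k+1}^{0}=\tau_k/\delta=\tau_k$; assuming inductively that $\tau_k=\tau_0<\eta_{inf}/L_f$, the scaled descent Lemma~\ref{lemmadescent} applied with $\tau=\tau_0$ and $D=D_{k+1}\in\mathcal{D}_{\eta_{inf}}$ gives $\mathbb{D}_f(x^{(k+1)},y^{(k+1)})\le \frac{1}{2\tau_0}\|x^{(k+1)}-y^{(k+1)}\|_{D_{k+1}}^{2}$, so the exit test \eqref{eq:backtracking} is met already at the first inner trial $i=0$ (as in the termination argument for the backtracking loop) and $\tau_{k+1}=\tau_{k+1}^{0}=\tau_0$. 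By induction $\tau_k=\tau_0$ for every $k\ge 0$, which is the first assertion; moreover \textsc{STEP 2} together with $\mu_{f,k}=\mu_f/\eta_{sup}^{k}$, $\mu_{g,k}=\mu_g/\eta_{sup}^{k}$ then yields $q_k=\frac{\tau_0\mu}{\eta_{sup}^{k}+\tau_0\mu_g}$ for all $k$.

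The heart of the argument is to show that the error sequences $E_1^{k},E_2^{k}$ in \eqref{eq:OmegaE} stay bounded. To this end I would derive a geometric upper bound for $\theta_{i+1}^{-1}$. Using $\tau_{i+1}'=q_{i+1}/\mu_{i+1}$ and the bound $q_{i+1}t_{i+1}^{2}\le 1$ from Lemma~\ref{rem:2}, we get $\tau_{i+1}'t_{i+1}^{2}\le 1/\mu_{i+1}\le \eta_{sup}/\mu$; moreover $\omega_j=1-q_jt_j\ge 1-\sqrt{q_j}$ (again because $q_jt_j^{2}\le 1$), and since $\eta_{sup}^{j}\ge\eta_{inf}$ one has $q_j\le q=\frac{\tau_0\mu}{\eta_{inf}+\tau_0\mu_g}$, hence $\omega_j\ge 1-\sqrt{q}$. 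Plugging these into the definition \eqref{eq:theta} of $\theta_k$ gives
\[
\theta_{i+1}^{-1}=\frac{\tau_{i+1}'t_{i+1}^{2}}{\prod_{j=0}^{i+1}\omega_j}\le \frac{\eta_{sup}}{\mu}\left(\frac{1}{1-\sqrt{q}}\right)^{i+2}.
\]
With the prescribed $\epsilon_{i+1}=\mathcal{O}(a^{i+1})$ and $a<1-\sqrt{q}$, the terms $\theta_{i+1}^{-1}\epsilon_{i+1}$ and $\sqrt{\theta_{i+1}^{-1}\epsilon_{i+1}}$ are bounded by a constant times $(a/(1-\sqrt{q}))^{i+1}$ and $(\sqrt{a/(1-\sqrt{q})}\,)^{i+1}$ respectively, both ratios being strictly less than one; thus $E_1^{k}$ and $E_2^{k}$ are bounded uniformly in $k$ by convergent geometric series.

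It then remains to substitute into the master estimate \eqref{eq:rate}. The boundedness of $E_1^{k},E_2^{k}$, together with the fact that $\omega_0$, $\tau_0'$, $t_0$ and $\gamma$ are constants, makes the squared parenthesis in \eqref{eq:rate} bounded independently of $k$, so $F(x^{(k+1)})-F(x^{*})=\mathcal{O}(\theta_{k+1})$. To bound $\theta_{k+1}$ I would reuse the intermediate computation in \eqref{eq:lin_rate}, namely $\theta_{k+1}\le \frac{\eta_{sup}^{0}\theta_0}{\eta_{sup}^{k+1}}\prod_{i=1}^{k+1}(1-\sqrt{q_i})$; invoking $\eta_{sup}^{0}\le\eta_{sup}$, $\eta_{sup}^{k+1}\ge\eta_{inf}$, $\theta_0\le \frac{1}{\tau_0}-\mu_{f,0}\le \frac{1}{\tau_0}$ from \eqref{eq:theta1}, and $q_i\ge\frac{\tau_0\mu}{\eta_{sup}+\tau_0\mu_g}$ (since $\eta_{sup}^{i}\le\eta_{sup}$) yields
\[
\theta_{k+1}\le \frac{\eta_{sup}}{\eta_{inf}\tau_0}\left(1-\sqrt{\frac{\tau_0\mu}{\eta_{sup}+\tau_0\mu_g}}\right)^{k+1},
\]
which gives the announced linear rate.

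The only genuinely delicate point is the bookkeeping of the scaling bounds: one must exploit $\eta_{sup}^{j}\ge\eta_{inf}$ (hence $q_j\le q$) to lower-bound the factors $\omega_j$ and secure summability of the perturbation terms, while using $\eta_{sup}^{i}\le\eta_{sup}$ (hence $q_i\ge\tau_0\mu/(\eta_{sup}+\tau_0\mu_g)$) to upper-bound the contraction factor $\theta_{k+1}$; this is precisely why the hypothesis states $q$ in terms of $\eta_{inf}$ whereas the rate features $\eta_{sup}$. Apart from this, the proof is a routine combination of geometric-series estimates with the general bounds of Theorem~\ref{thm:rate}.
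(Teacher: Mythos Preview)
Your proof is correct and follows essentially the same route as the paper's own argument: freeze $\tau_k\equiv\tau_0$ via Lemma~\ref{lemmadescent}, bound $\theta_{i+1}^{-1}$ geometrically using $q_j\le q$ (from $\eta_{sup}^{j}\ge\eta_{inf}$) to ensure summability of $E_1^k,E_2^k$, then invoke \eqref{eq:rate} and the linear bound \eqref{eq:lin_rate} with $q_i\ge\tau_0\mu/(\eta_{sup}+\tau_0\mu_g)$ (from $\eta_{sup}^{i}\le\eta_{sup}$) to control $\theta_{k+1}$. Your exponent $i+2$ in the bound for $\theta_{i+1}^{-1}$ correctly accounts for the factor $\omega_0$ in \eqref{eq:theta}, and your explicit distinction between the roles of $\eta_{inf}$ and $\eta_{sup}$ is well observed.
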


\begin{proof}
Since $\tau_0<\eta_{inf}/L_f$ and $\mu_f\leq L_f$, there holds $q<1$. Thanks to $\tau_0<\eta_{inf}/L_f$, $\delta=1$ and Lemma \ref{lemmadescent}, we also have that $\tau_k=\tau_0$ for all $k\geq 0$. Furthermore, $\eta_{sup}^k\geq \eta_{inf}$ implies $q_k\leq q$ for all $k\geq 0$. Then, the following upper bound holds for $\theta_{i+1}^{-1}$:
\begin{equation}
\theta_{i+1}^{-1}\leq \frac{1}{\mu_{i+1}}\prod_{j=1}^{i+1}\frac{1}{1-\sqrt{q_j}}\leq \frac{\eta_{sup}}{\mu}\left(\frac{1}{1-\sqrt{q}}\right)^{i+1}.
\end{equation}
Finally, since $\tau_k\equiv \tau_0$, we can provide the following lower bound for the average quantity $\bar{q}_{k+1}$:
\begin{equation}\label{eq:new_lower}
\sqrt{\bar{q}_{k+1}}\geq \sqrt{\frac{\mu\tau_0}{\eta_{sup}+\tau_0\mu_g}}.
\end{equation}
Then, choosing $\epsilon_{k+1}$ as in \eqref{eq:prefixed4}, and applying \eqref{eq:lin_rate} combined with \eqref{eq:new_lower} in \eqref{eq:rate}, the thesis follows.
\end{proof}

Corollary \ref{cor:guarantee4} guarantees that the function values of the iterates of the SAGE-FISTA  Algorithm \ref{algo:GFISTA} converge to the optimal value at a $R-$linear rate, provided that the errors sequence $\{\epsilon_k\}_{k\in\N}$ converges linearly to $0$ at a sufficiently small rate. We remark that the result obtained in Corollary \ref{cor:guarantee4} is similar to the one provided in \cite[Proposition 4]{Schmidt2011} for a basic variant of the inexact FISTA algorithm, which is specifically designed for strongly convex problems. In contrast with our work, the authors in \cite{Schmidt2011} assume strong convexity only on the differentiable part and, unlike SAGE-FISTA, their algorithm makes use of a constant inertial parameter defined by $\beta_k=(1-\sqrt{\mu_f/L_f})/(1+\sqrt{\mu_f/L_f})$.

\section{Numerical experiments} \label{sec:numerical}

In this section, we report some numerical experiments where  SAGE-FISTA (Algorithm \ref{algo:GFISTA}) is applied to some image restoration problems such as edge-preserving image denoising and image deblurring. As observed in several related works (see, e.g., \cite{Bonettini2019} for a review), the scaling framework discussed in Section \ref{sec:2} turns out to be particularly effective whenever a modeling encoding signal-dependent Poisson noise is considered, as it is common in microscopy and astronomy imaging applications \cite{Bertero2018}. 

Given an observed image $z\in \mathbb{R}_{\geq 0}^n$ with nonnegative pixels, we thus consider the following ill-posed image restoration problem:
\begin{equation}  \label{eq:Poisson_recon}
    z = \mathcal{P}\left(Hx + b \right),
\end{equation}
where $x\in\mathbb{R}_{\geq 0}^n$ is the desired solution, $H\in\mathbb{R}^{n\times n}$ represents a blur (convolution) operator computed for a given Point Spread Function (PSF), the term $b\in\mathbb{R}^n_{> 0}$ stands for a positive background and $\mathcal{P}(w)$ denotes a realization of a Poisson-distributed $n$-dimensional random vector with parameter $w\in\mathbb{R}^n_{>0}$. Following the standard Bayesian Maximum A Posteriori (MAP) paradigm, one finds that the data fidelity term corresponding to the negative log-likelihood of the Poisson p.d.f. is the generalized Kullback-Leibler (KL) divergence functional defined by:
\begin{equation}  \label{eq:KL_def}
    KL(Hx+b;z) := \sum_{i=1}^n \left( z_i \log \frac{z_i}{(Hx)_i + b_i} + (Hx)_i + b_i - z_i \right),
\end{equation}
where the convention $0 \log 0 = 0$ is adopted.  If the operator $H$ has nonnegative entries and if it has at least one strictly positive entry for each row and column (i.e. $He>0$ and $H^Te>0$ for $e\in\mathbb{R}^n$ being the vector of all ones), the KL divergence is a nonnegative, convex and coercive function on the non-negative orthant $Y:= \left\{ x\geq 0 \right\}$, see, e.g., \cite{Harmany12}. The generalized KL divergence can be then coupled with convex and non-smooth image regularization terms, such as the isotropic Total Variation (TV) regularization:
\begin{equation}   \label{eq:TV}
TV(x) := \| \nabla x \|_{2,1} = \sum_{i=1}^n \| \nabla_i x \|_2,
\end{equation}
where, for all $i=1,\ldots,n$, the term $\nabla_i x \in \mathbb{R}^{2}$ stands for the standard forward-difference discretization of the image gradient along the two horizontal and vertical directions at pixel $i$. 
 For simplicity, we will consider reflexive boundary conditions for the computation of such discretization. 
Due to the non-smoothness of the TV term \eqref{eq:TV}, first-order optimization schemes based on forward-backward splitting are often used for solving the composite optimization problem:
\begin{equation}  \label{pb:TV_KL}
    \min_{x\in\mathbb{R}^n}~ KL(Hx+b;z) + \lambda TV(x) + \iota_Y(x)
\end{equation}
where the parameter $\lambda>0$ balances the effect of the regularization against data fidelity and where $\iota_Y(\cdot)$ denotes the indicator function of  $Y$, thus enforcing a non-negative constraint on $Y\subset \mathbb{R}^n$ for the desired solution $x$.

In the following sections, we consider two models analogous to \eqref{pb:TV_KL}, which we adapt to fit with the strongly-convex scenarios suitable for SAGE-FISTA. For both models, we simulate Poisson noisy acquisitions by means of the MATLAB \texttt{imnoise} routine.  We report  in Figure \ref{fig:test_images} the blurred and noisy images used in our numerical experiments, while in Table \ref{table:details_figs} we specify the parameters involved in both the simulation and the reconstruction for each image considered.
%In particular, for our first numerical test in Section \eqref{sec:Pden} we consider a simplified Poisson image denoising problem where a strongly-convex 2nd order Taylor approximation of the function \eqref{eq:KL_def} is considered, while for our second test on Poisson image deblurring we consider a quadratic perturbation of the TV regularization term \eqref{eq:TV}.

\begin{figure}[t!]
\begin{tabular}{cccc}
    \includegraphics[scale = 0.24]{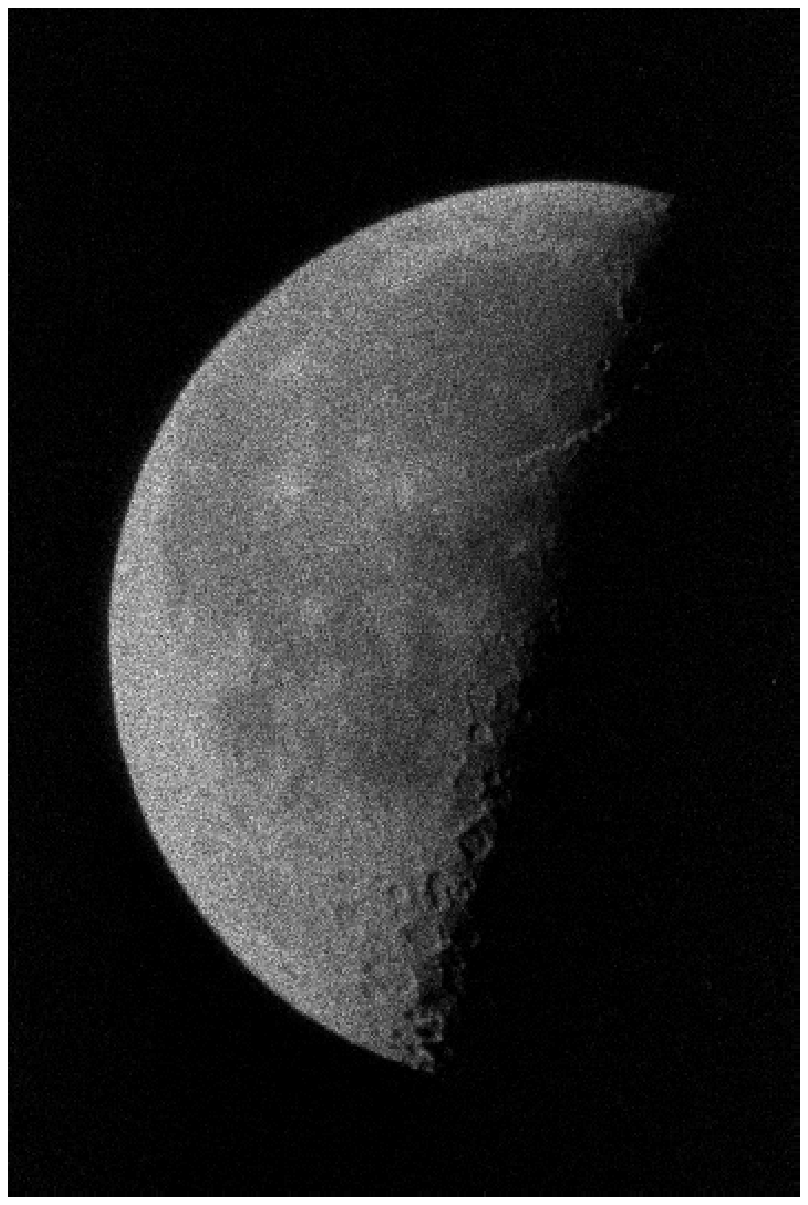} & 
    \includegraphics[scale = 0.24]{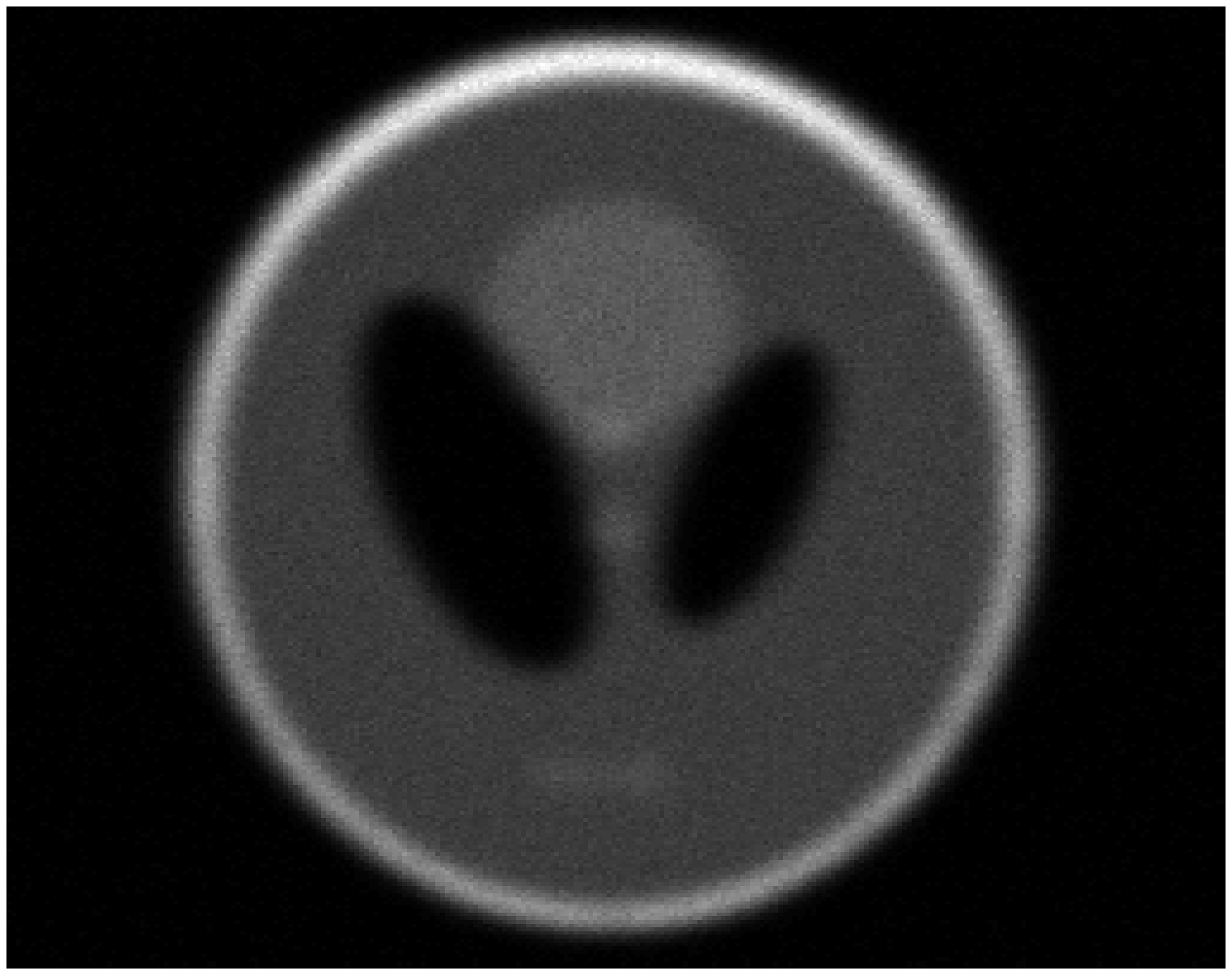} &
    \includegraphics[scale = 0.24]{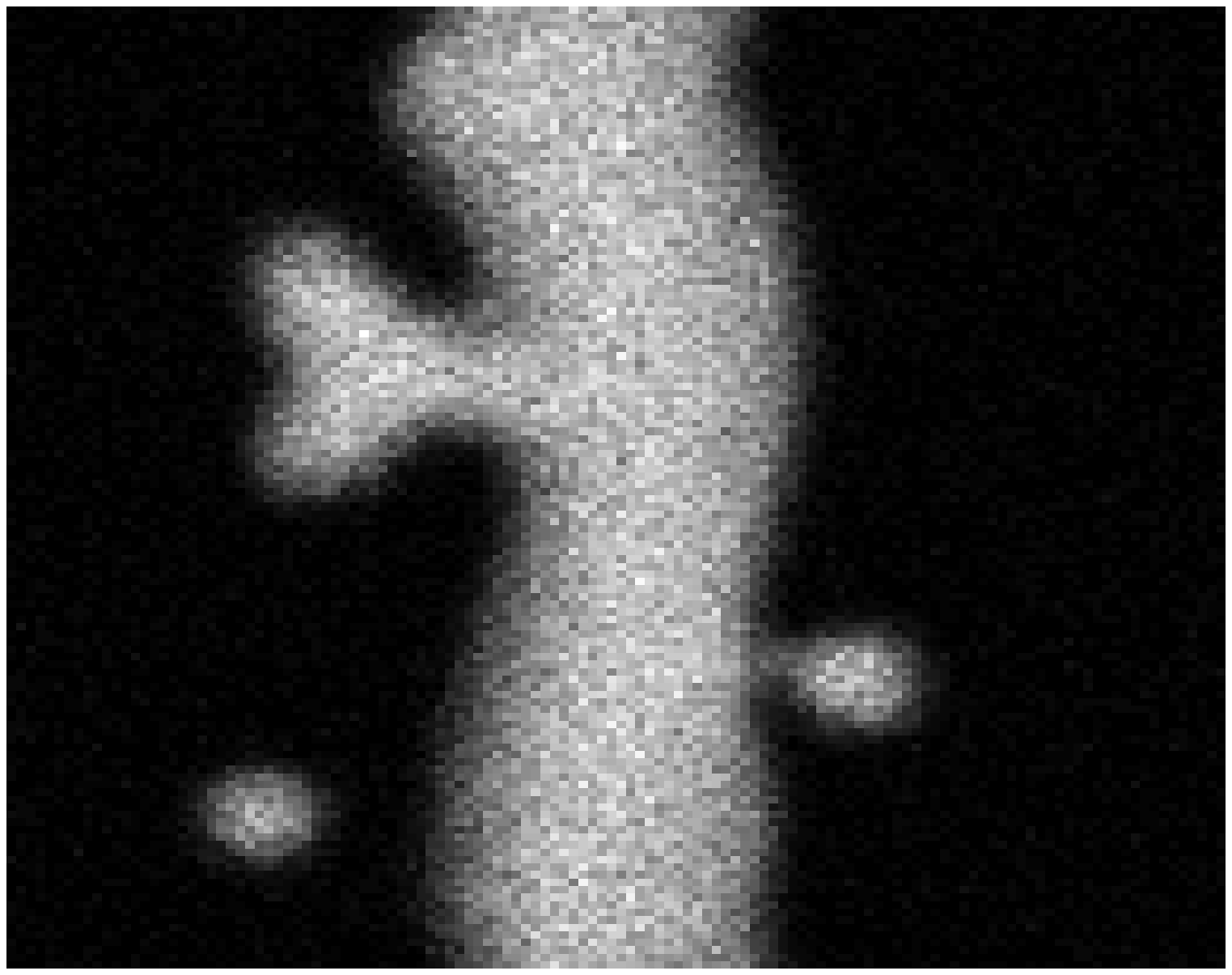} &
    \includegraphics[scale = 0.24]{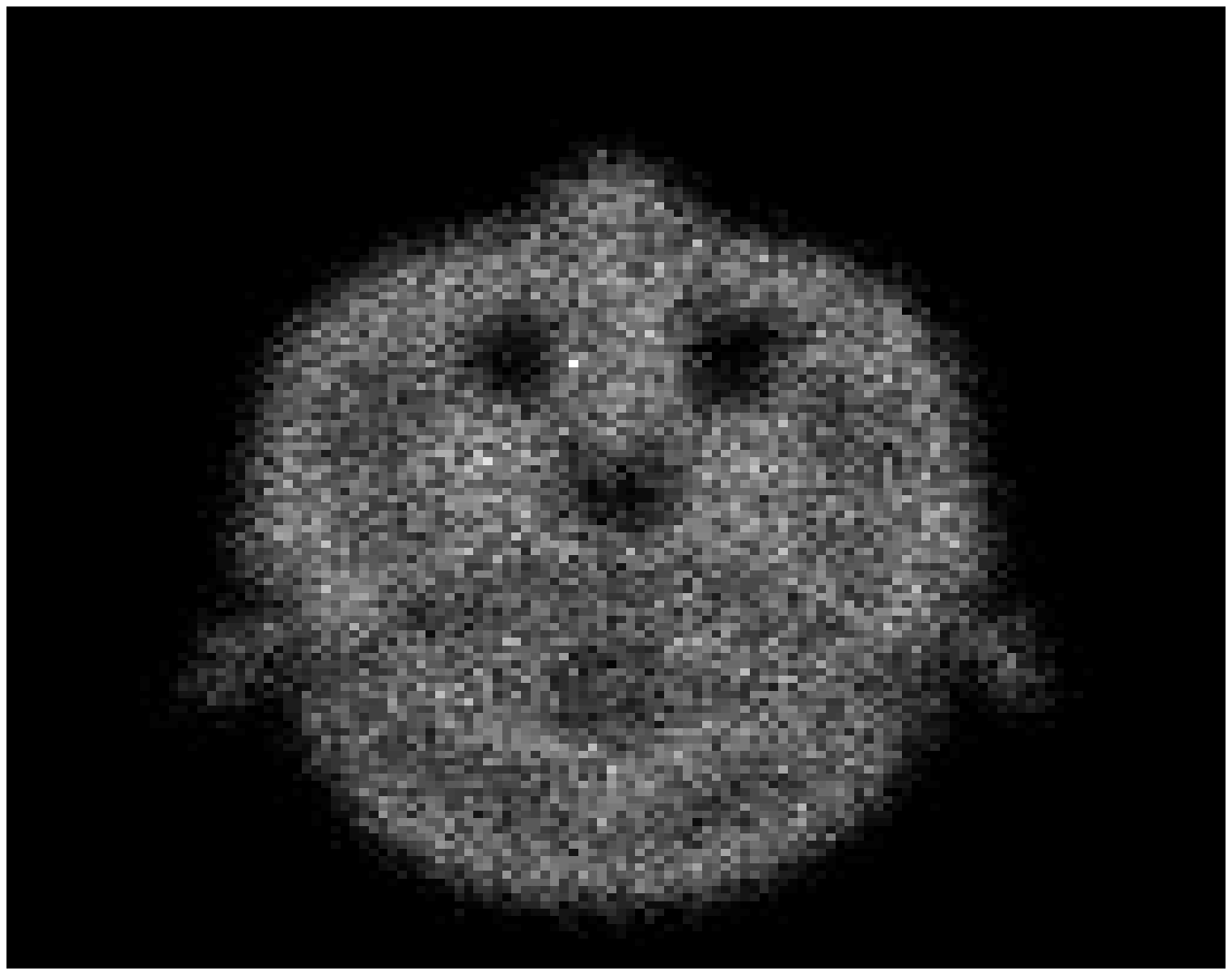} \\
    (a) \texttt{moon} &
    (b) \texttt{phantom} & 
    (c) \texttt{micro} &
    (d) \texttt{mri}
\end{tabular}
    \caption{Blurred and noisy test images used for the numerical tests.}
    \label{fig:test_images}
\end{figure}

For all tests, an approximation $x^*$ of the desired solution is pre-computed by running standard FISTA for 5000 iterations.
Our results are validated by computing the relative objective error: 
\begin{equation}
    eF_k = \frac{F(x^{(k)}) - F(x^*)}{F(x^*)},\qquad k\geq 1
\end{equation}
both along the iterations and with respect to computational times. In the latter case, we show (at most) the first 30 seconds of run, which for our examples is a time frame long enough to reach a significant level of accuracy. The other algorithmic parameters (backtracking factors and initializations) are specified in each test.

%Before discussing on the performance of SAGE-FISTA, we need to make precise the general strategies required for computing at each iteration $k\geq 1$ both an $\epsilon_{k}$-approximation $x^{(k)}\in\text{dom}(g)$ of the proximal point $\hat{x}^{(k)}$ satisfying $x^{(k)}\approx_{\epsilon_{k}} \tilde{x}^{(k)}$ and for defining a suitable sequence of variable metric operators $\left\{D_k\right\}_{k\in\mathbb{N}}$.
%The former choice is crucial in order to obtain the expected convergence rates proved in Theorem \ref{thm:rate}, while the latter choice needs to be made precise so as to satisfy Assumption \ref{ass:1}  which our proof arguments are built upon. Both strategies have thoroughly been discussed in a series of works (see, e.g., \cite{Bonettini2018a,Bonettini-Loris-Porta-Prato-2015,Lanteri-etal-2001}), so we keep the description self-contained.

\begin{table}[t!]
\centering
\begin{tabular}{|c|c|c|c|c|c|c|c|}
\hline
\textbf{Image}   & \textbf{Size}   & \textbf{Range} & \textbf{$\sigma_{PSF}$} & \textbf{$b$} & \textbf{$\lambda$} & \textbf{$\mu$} & \textbf{$L_f$} \\ \hline\hline
\texttt{moon}    & $537\times 358$ & $[0,400]$      & $0$                     & $0.01$    & $0.15$             & $\mu_f\in\left\{0,0.0025\right\}$ & $100$ \\ \hline
\texttt{phantom} & $256\times 256$ & $[0,1]$        & $1.4$                   & $0.01$    & $0.004$            & $\mu_g=0.0001$ & $7701$       \\ \hline
\texttt{micro}   & $128\times 128$ & $[0,92]$       & $3.2$                   & $0.5$        & $0.092$            & $\mu_g=0.0001$ & $368$      \\ \hline
\texttt{mri}     & $128\times 128$ & $[0,170]$      & $3.2$                   & $0.5$        & $0.001$            & $\mu_g=0.0005$  & $680$     \\ \hline
\end{tabular}%
\caption{Simulation and model parameters  for the test images in Figure \ref{fig:test_images}.}
\label{table:details_figs}
\end{table}

\paragraph{Inexact computation of proximal points with $\epsilon_{k}$-accuracy.} Following \cite[Section 4.2]{Bonettini2018a}, we now sketch a general strategy ensuring that the inexactness condition required by Definition \ref{def:eps_approx} is fulfilled in practice. For that, we require that the non-smooth function $g$ in \eqref{minf} can be expressed in the form:
\begin{eqnarray}  \label{eq:g_form}
g(x) = \sum_{i=1}^p \phi_i (M_ix) + \psi(x),
\end{eqnarray}
where $M_i:\mathcal{H}\to \mathcal{Z}_i$ are linear bounded operators between Hilbert spaces and $\phi_i:\mathcal{Z}_i\to \mathbb{R}\cup\left\{\infty\right\}$, $\psi:\mathcal{H}\to \mathbb{R}\cup\left\{\infty\right\}$ are proper, l.s.c. and convex functions. For image restoration problems, the couple $(\phi_i, M_i)$ is typically associated to the use of edge-preserving regularization terms (e.g. \eqref{eq:TV} by setting $\phi_i:\mathbb{R}^2\to\mathbb{R}, \phi_i(v) = \lambda\|v\|$ and $M_i = \nabla_i$), while the function $\psi$ may encode further requirements on the desired signal such as a positivity constraint or a convex perturbation term.

At each iteration $k\geq 1$ of SAGE-FISTA, the inner (primal) subproblem that needs to be solved to compute the proximal point $x^{(k)}$ has the form:
\begin{equation}  \label{eq:primal_pb}
   \min_{x\in \mathcal{H}}~ \left\{ \mathcal{P}_{\tau_{k},D_{k}}(x) := \sum_{i=1}^p \phi_i (M_ix) + \psi(x) + \frac{1}{2\tau_{k}} \|x- \bar{y}^{(k)} \|^2_{D_{k}} \right\},
\end{equation}
where $\bar{y}^{(k)}:=y^{(k)}-\tau_{k} D_{k}^{-1}\nabla f (y^{(k)})$.
Denoting by $\mathcal{Z}= \mathcal{Z}_1\times\ldots\times \mathcal{Z}_p$ the Hilbert product space equipped with the inner product $\langle w, v\rangle=\sum_{i=1}^p \langle w_i, z_i\rangle$ and by $M:\mathcal{H}\to\mathcal{Z}$ the linear bounded operator  $Mx = (M_1x, \ldots, M_p x)\in\mathcal{Z}$, the dual formulation of \eqref{eq:primal_pb} reads as
\begin{equation}  \label{eq:dual_pb}
    \max_{w\in\mathcal{Z}}~\left\{ \mathcal{Q}_{\tau_{k},D_{k}}(w) := -\sum_{i=1}^p \phi_i^*(w_i) + \Phi_{\tau_{k},D_{k},\bar{y}^{(k)}}(w)\right\}
\end{equation}
where $\phi_i^*$ is the Fenchel conjugate of $\phi_i$ and $\Phi_{\tau_{k},D_{k},\bar{y}^{(k)}}(w)$ is defined as:
\begin{align*}
    \Phi_{\tau_{k},D_{k},\bar{y}^{(k)}}(w)  &:=  \psi\left( \prox_{\tau_{k}\psi}^{D_{k}} (\bar{y}^{(k)}-\tau_{k}D^{-1}_{k} M^* w) \right)   - \frac{1}{2\tau_{k}} \|\bar{y}^{(k)}-\tau_{k}D^{-1}_{k} M^* w  \|^2_{D_{k}}  + \frac{1}{2\tau_{k}} \|\bar{y}^{(k)}\|^2_{D_{k}}\\
    & +\frac{1}{2\tau_{k}} \| \prox_{\tau_{k}\psi}^{D_{k}} (\bar{y}^{(k)}-\tau_{k}D^{-1}_{k} M^* w) - (\bar{y}^{(k)}-\tau_{k}D^{-1}_{k} M^* w ) \|^2_{D_{k}}.
\end{align*}
Upon suitable assumptions (see, e.g. \cite[Section 4.1]{Bonettini2018a}) problems \eqref{eq:primal_pb} and \eqref{eq:dual_pb} are equivalent. Recalling definition \eqref{eq:function_h}, one can further deduce the following inequality:
\begin{align*}
h_{\tau_k,D_k}(x;\bar{y}^{(k)})-h_{\tau_k,D_k}(\prox_{\tau_{k}\psi}^{D_{k}} (\bar{y}^{(k)});\bar{y}^{(k)})&=\mathcal{P}_{\tau_{k},D_{k}}(x) - \mathcal{P}_{\tau_{k},D_{k}} \left( \prox_{\tau_{k}\psi}^{D_{k}} (\bar{y}^{(k)})\right) \\
&\leq   \mathcal{P}_{\tau_{k},D_{k}}(x) - \mathcal{Q}_{\tau_{k},D_{k}}(w), \quad \forall \ w\in\mathcal{Z}.
\end{align*}
Hence, a sufficient condition for a point $x$ to be an $\epsilon_{k}$-approximation as in Definition \ref{def:eps_approx} is the existence of a dual point $w$ such that $\mathcal{P}_{\tau_{k},D_{k}}(x) - \mathcal{Q}_{\tau_{k},D_{k}}(w)\leq \epsilon_{k}$. Assuming that $g$ is continuous on its domain and $\text{dom}(\psi)=\text{dom}(g)$, such pair $(x,w)$ can be computed by generating a dual sequence $\left\{ w ^{(k,l)}\right\}_{l\in\mathbb{N}}\subset \mathcal{Z}$ converging to the solution of \eqref{eq:dual_pb} with
$$
\lim_{l\to\infty} ~\mathcal{Q}_{\tau_{k},D_k}\left( w ^{(k,l)}\right) = \max_{z\in\mathcal{Z}}~\mathcal{Q}_{\tau_{k},D_k}(z),
$$
a corresponding primal sequence $\left\{ x ^{(k,l)}\right\}_{l\in\mathbb{N}}\subset \mathcal{H}$ computed as
\begin{equation}
        x^{(k,l)} := \prox_{\tau_{k}\psi}^{D_k}\left(\bar{y}^{(k)} - \tau_{k}D_k^{-1} M^* w^{(k,l)} \right),\qquad \forall l\in\mathbb{N},\label{eq:primal_seq}
\end{equation}
and then stopping the iterates at the first nonnegative integer $l$ such that
\begin{equation}  \label{eq:criterion_eps}
\mathcal{P}_{\tau_{k},D_k}(x^{(k,l)}) - \mathcal{Q}_{\tau_{k},D_k}(w^{(k,l)})\leq \epsilon_k.
\end{equation}
Such a procedure is well-defined, as condition \eqref{eq:criterion_eps} holds for all sufficiently large $l$ \cite[Proposition 4.2]{Bonettini2018a}. Note that the dual sequence $\left\{w^{(k,l)} \right\}_{l\in\mathbb{N}}$ can be generated by means of a forward--backward scheme solving \eqref{eq:dual_pb}, e.g. using an efficient inner FISTA routine, provided that the extrapolation parameters are chosen in a way that weak convergence of the iterates is guaranteed (see, e.g.~ \cite{Chambolle-Dossal-2014}).

\paragraph{Variable metric selection}

As far as the choice of the scaling matrices $\left\{D_k\right\}_{k\in\mathbb{N}}$ is concerned, we exploit the split-gradient technique studied in \cite{Lanteri-etal-2001} and later used in several works (see, e.g., \cite{Bonettini-Loris-Porta-Prato-2015,Bonettini2018a}) where a decomposition of $\nabla f$ into a non-negative part and a positive one is used. The idea beyond this strategy consists in decomposing the gradient of the smooth component $f$ of problem \eqref{minf} as $-\nabla f(x) = U(x) - V(x)$, where $U(x)\geq 0$ and $V(x)>0$, and then defining for each $k\geq 1$ the scaling matrix $D_k=\text{diag}\left( y^{(k)}/V(y^{(k)}) \right)^{-1}$, so that the forward iteration can be written as:
\begin{equation} \label{eq:classical_form}
    \bar{y}^{(k)} = y^{(k)}- \tau_{k}D_k^{-1}\nabla f(y^{(k)}) =  y^{(k)}+ \tau_{k} y^{(k)} \cdot \left( \frac{U(x^{(k)})}{V(x^{(k)})} -1 \right),
\end{equation}
where the fraction and product symbols denote component-wise division and product, respectively. Note that several classical reconstruction algorithms in imaging can be cast in the form \eqref{eq:classical_form}, such as the Richardson-Lucy (RL) algorithm for Poisson image deconvolution (see, e.g., \cite{Bonettini2019,Lanteri-etal-2001}). In order to ensure the conditions required by Assumption \ref{ass:1}, we introduce an appropriate thresholding parameter in the definition of $D_k$, thus obtaining
\begin{equation}  \label{eq:scaling_general}
    D_k = \text{diag}\left(\max\left(\frac{1}{\gamma_k},\min\left(\gamma_k,  \frac{y^{(k)}}{V(y^{(k)})} \right)\right)\right)^{-1},
\end{equation}
for  parameters $\gamma_k$ defined as in \cite{Bonettini-Loris-Porta-Prato-2015,Bonettini2018a} by:
\begin{equation} \label{eq:gamma_k_pb1}
    \gamma_k = \sqrt{1+\frac{s_1}{(k+1)^{s_2}}},\quad \text{where }s_1 >0, \ s_2>1.
\end{equation}
Clearly, the choice \eqref{eq:scaling_general} is problem-dependent, due to presence of the the function $V(\cdot)$. Note that in \eqref{eq:gamma_k_pb1}, when $s_1=0$, we have $D_k\equiv\mathcal{I}$, hence the standard Euclidean metric is recovered. In general, it is good practice to choose a large value of $s_1$ in order to benefit from the use of the variable metric in the early iterations of the algorithm, while leaving the asymptotic convergence behavior to be driven by $s_2$.

\subsection{Weighted-$\ell^2$-TV image denoising} \label{sec:Pden}

As a first test, we  set $H=\mathcal{I}\in\mathbb{R}^{n\times n}$  in \eqref{eq:Poisson_recon}  and consider a Poisson image denoising problem where TV regularization \eqref{eq:TV} is combined with a data fidelity term corresponding to the second order Taylor approximation of the KL divergence \eqref{eq:KL_def} around the noisy data $z$, see, e.g.~ \cite{Sawatzky2011,Burger_2014,WCEL0}. For a given Poisson noisy image $z\in\mathbb{R}^n_{\geq 0}$ and for a fixed regularization parameter $\lambda>0$, we consider the problem:
\begin{equation}  \label{eq:wl2_tv}
    x^* = \argmin_{x\in \mathbb{R}^n}~\left\{ F(x) : = \frac{1}{2}\sum_{i=1}^n \frac{(x_i-z_i+b)^2}{z_i+b} + \lambda TV(x) + \iota_{Y}(x)\right\},
\end{equation}
where $b>0$ is a constant background term. By weighting the least-square residuals by the inverse values of the noisy image, the use of the data fidelity term in \eqref{eq:wl2_tv} promotes high fidelity in low-intensity pixels (low noise) and large regularization in high-intensity pixels (high noise). Applying the splitting \eqref{minf} to problem \eqref{eq:wl2_tv}, we can thus set:
%while reducing the analytical and computational difficulties associated to the use of the KL divergence.
\begin{equation}  \label{eq:pb1_splitting}
    f(x):=\frac{1}{2}\sum_{i=1}^n \frac{(x_i-z_i+b)^2}{z_i+b} ,\qquad g(x):=\lambda TV(x)+ \iota_{Y}(x)
\end{equation}
and observe that $f$ is $\sigma_f$-strongly convex with $L_f$-Lipschitz continuous gradient with:
\begin{equation}  \label{eq:parametersTEST1}
    \sigma_f = \frac{1}{\max\limits_{i=1,\ldots,n} z_i+b},\qquad \nabla f(x) = \frac{x-z+b}{z+b},\qquad L_f= \frac{1}{\min\limits_{i=1,\ldots,n} z_i+b}.
\end{equation}
%where $\max z_i$ and $\min z_i$ denote the positive maximum/minimum value of the noisy image $z$, respectively.

We now apply SAGE-FISTA Algorithm \ref{algo:GFISTA} with $ \mu_g=0$ and either $\mu_f=\sigma_f$ or $\mu_f=0$. In particular, we report the results obtained when strong convexity is taken into account (SAGE-FISTA with $\mu_f=\sigma_f$) and when it is not (S-FISTA with $\mu_f=0$). For this first test, we consider the \texttt{moon} noisy image in Figure \ref{fig:test_images}.  From \eqref{eq:parametersTEST1} and Table \ref{table:details_figs}, we have that  $L_f=100$ in this case. %setting $L_0=1/\tau_0$ as the initial estimate of $L_f$  and for suitable choices of the variable metric operators $\left\{D_k\right\}_{k\in\mathbb{N}}$ and tolerances $\left\{\epsilon_k\right\}_{k\in\mathbb{N}}$. 

\paragraph{Scaled VS. non-scaled SAGE-FISTA with backtracking.}
First, we compare the performance of the non-scaled version of SAGE-FISTA with its scaled counterparts for different choices of the scaling matrices $\left\{D_k\right\}_{k\in\mathbb{N}}$. We apply Algorithm \ref{algo:GFISTA} with  $\lambda=0.15$, maximum number of outer iterations \texttt{maxiter}=$500$, maximum number of inner backtracking iterations \texttt{max\_bt}=$10$, $\rho=0.8$ (decreasing backtracking factor), $L_0=30$ and $t_0=1.01$. In order to compare the adaptive backtracking strategy with the non-adaptive one, we choose $\delta\in\left\{0.99,1\right\}$.
%As far as the adaptive backtracking parameter $\delta\in(0,1]$ is concerned, 
We recall that for $\delta=1$, a standard Armijo backtracking is performed, while $\delta<1$ allows for local increasing of the algorithmic step-size. Finally, SAGE-FISTA is initialized by setting $x^{(0)}=z$.

In order to compute a suitable $\epsilon_k$-approximation $x^{(k)}$ of the proximal-gradient point, we follow the strategy described above with the choice $\phi_i:\mathbb{R}^2\to \mathbb{R},~ \phi_i(v)=
\lambda \| v\|_2$, $M_i = \nabla_i, i=1,\ldots,n$ and $\psi: \mathbb{R}^n \to \mathbb{R}\cup\left\{\infty\right\}, \psi(x) = \iota_Y(x)$.
Then, for each inner iteration $l\in\mathbb{N}$ of the primal-dual subroutine, the primal iterate \eqref{eq:primal_seq} is given by
\begin{equation}
x^{(k,l)}   = P_{Y,D_k} \left(y^{(k)}-\tau_k D_{k}^{-1}\left(\nabla f(y^{(k)})   + M^* w^{(k,l)} \right) \right).  \label{eq:projection_psi}
%\text{prox}^{D_k}_{\tau_k~ \iota_Y}\left(y^{(k)}-\tau_k D_{k}^{-1}\left(\nabla f(y^{(k)})   + M^* w^{(k,l)}\right) \right)
\end{equation}
%where $ P_{Y,D_k}$ stands for the projection operator on the non-negative orthant $Y$.
Concerning the sequence $\left\{\epsilon_k\right\}_{k\in\mathbb{N}}$, we apply the result provided by Corollary \ref{cor:guarantee1}, choosing in particular $a_{k} = 1/k^{2.1}$ for all $k\geq 1$ and updating $\theta_{k}$ in the inner backtracking routine by using the current estimates of $t_{k}$ and $q_{k}$ appearing in the definition \eqref{eq:tau_prime_omega_} of the elements $\omega_{k}$. 

The sequence of scaling matrices $\left\{D_k\right\}_{k\in\mathbb{N}}$ is chosen by applying the diagonal split-gradient strategy described above, which, for  $k\geq 1$, corresponds to the choice:
\begin{equation}  \label{eq:metric_wl2}
    D_k = \text{diag} \left( \max \left( \frac{1}{\gamma_k}, \min \left( \gamma_k, z + b\right) \right) \right)^{-1},
\end{equation}
where $\gamma_k$ are the thresholding parameters in \eqref{eq:gamma_k_pb1}.
We notice that since $H=\mathcal{I}$ in this example, the expression of the terms $D_k$ in \eqref{eq:metric_wl2} is very handy as it depends only on the noisy image $z$ and on the background term $b$.

We now apply SAGE-FISTA with and without scaling for solving problem \eqref{eq:wl2_tv} whenever different choices of parameters $s_1$ and $s_2$ in \eqref{eq:gamma_k_pb1} are considered. In Figure \ref{fig:TEST1_GFISTA_armijo}, we compare the results obtained with standard Armijo backtracking ($\delta=1$) and adaptive backtracking ($\delta=0.99$), respectively. This choice allows for local non-monotone adjustments of the estimates $L_k$ of $L_f$, see the plots for the Lipschitz estimates in Figure \ref{fig:TEST1_GFISTA_armijo}c. We observe that the scaling procedure significantly improves convergence rates for any selected combination of the thresholding parameters $(s_1,s_2)$, while remaining affordable in terms of computational times. This is a well-known behavior for scaled non strongly-convex variants of FISTA \cite{Bonettini2018a}; our experiments confirm that the same holds in strongly-convex regimes.

\begin{figure}[t!]

\hspace{-5mm}
\begin{tabular}{c@{}c@{}c}
 \includegraphics[height=3.9cm]{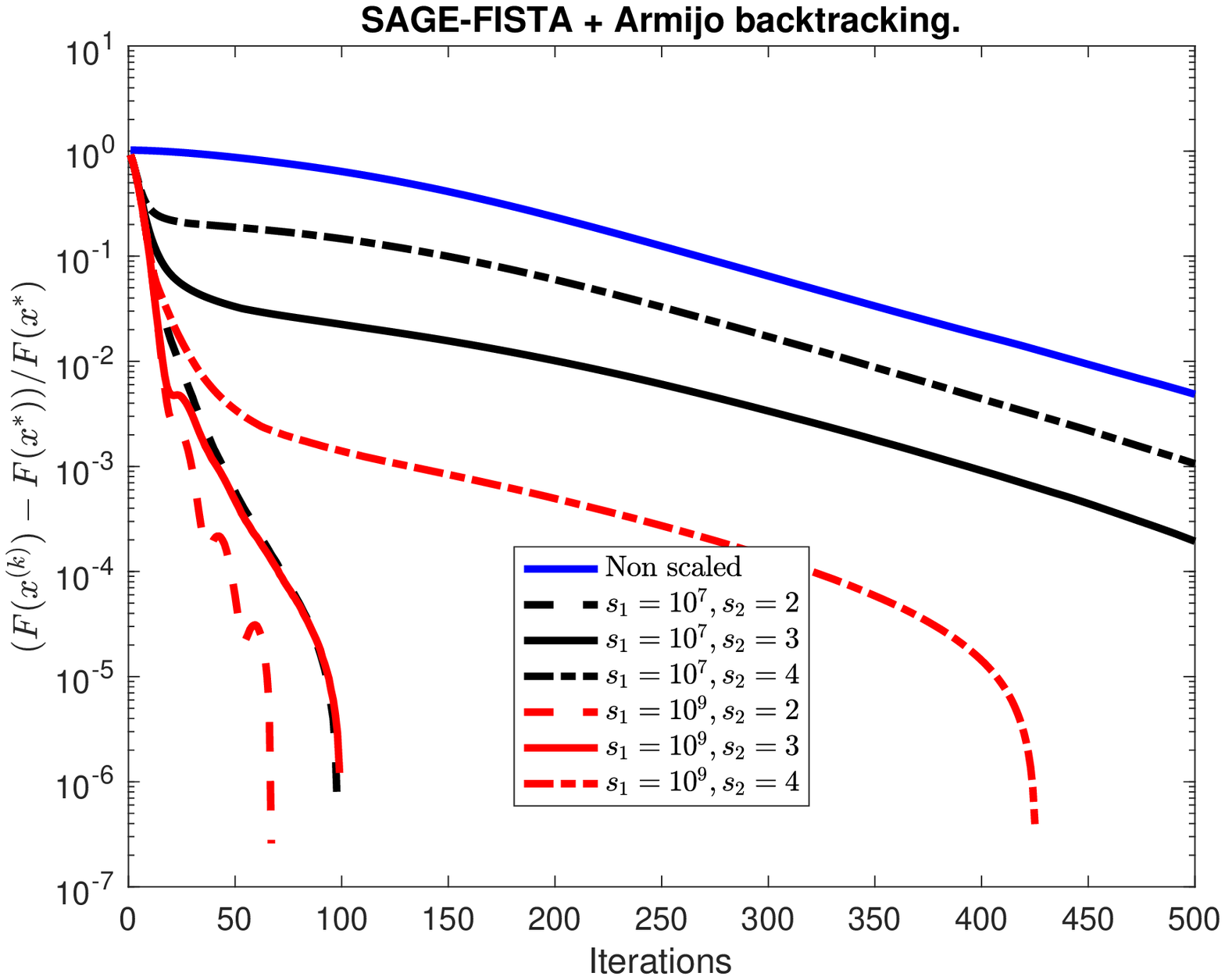} & \includegraphics[height=3.9cm]{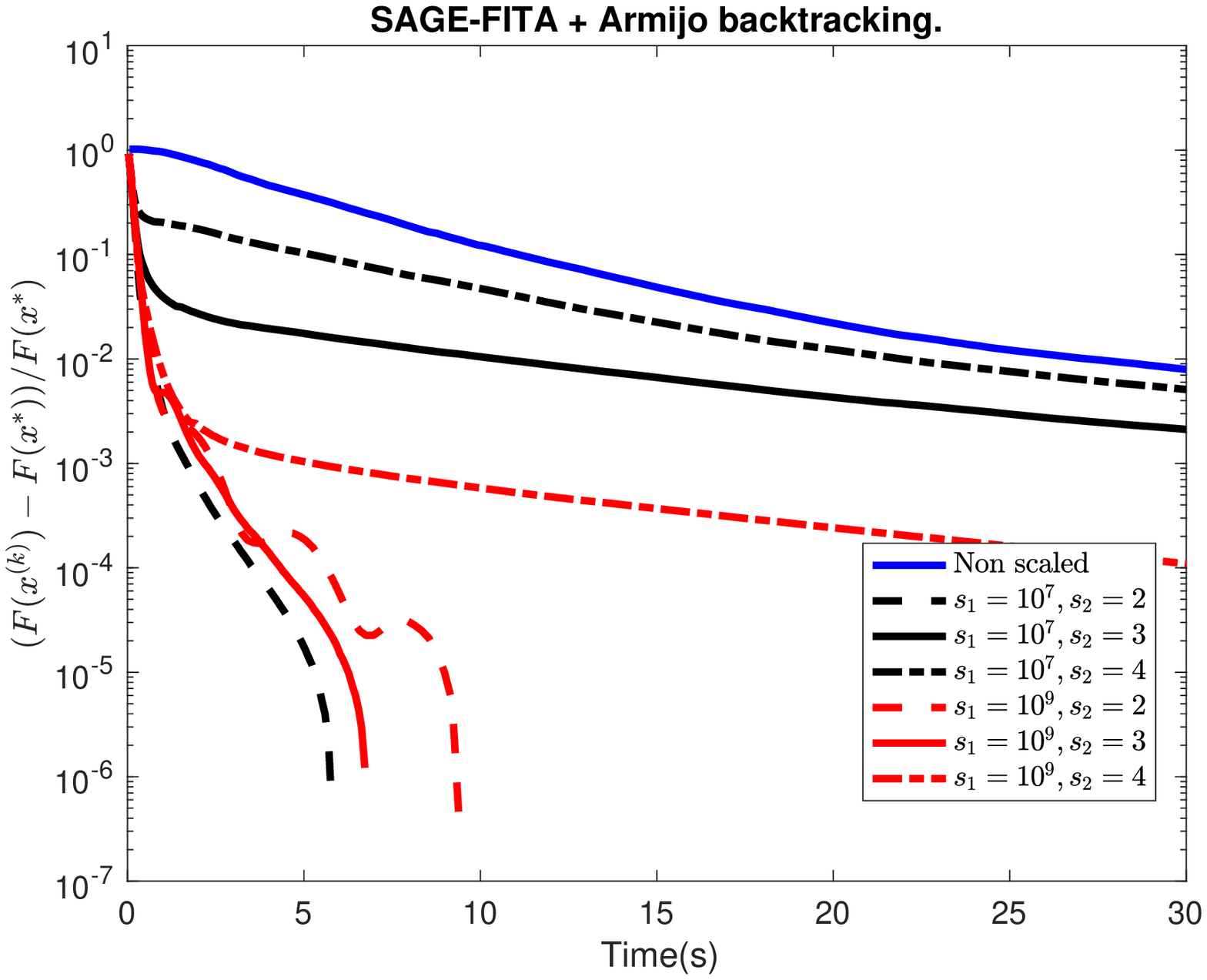} & \includegraphics[height=3.9cm]{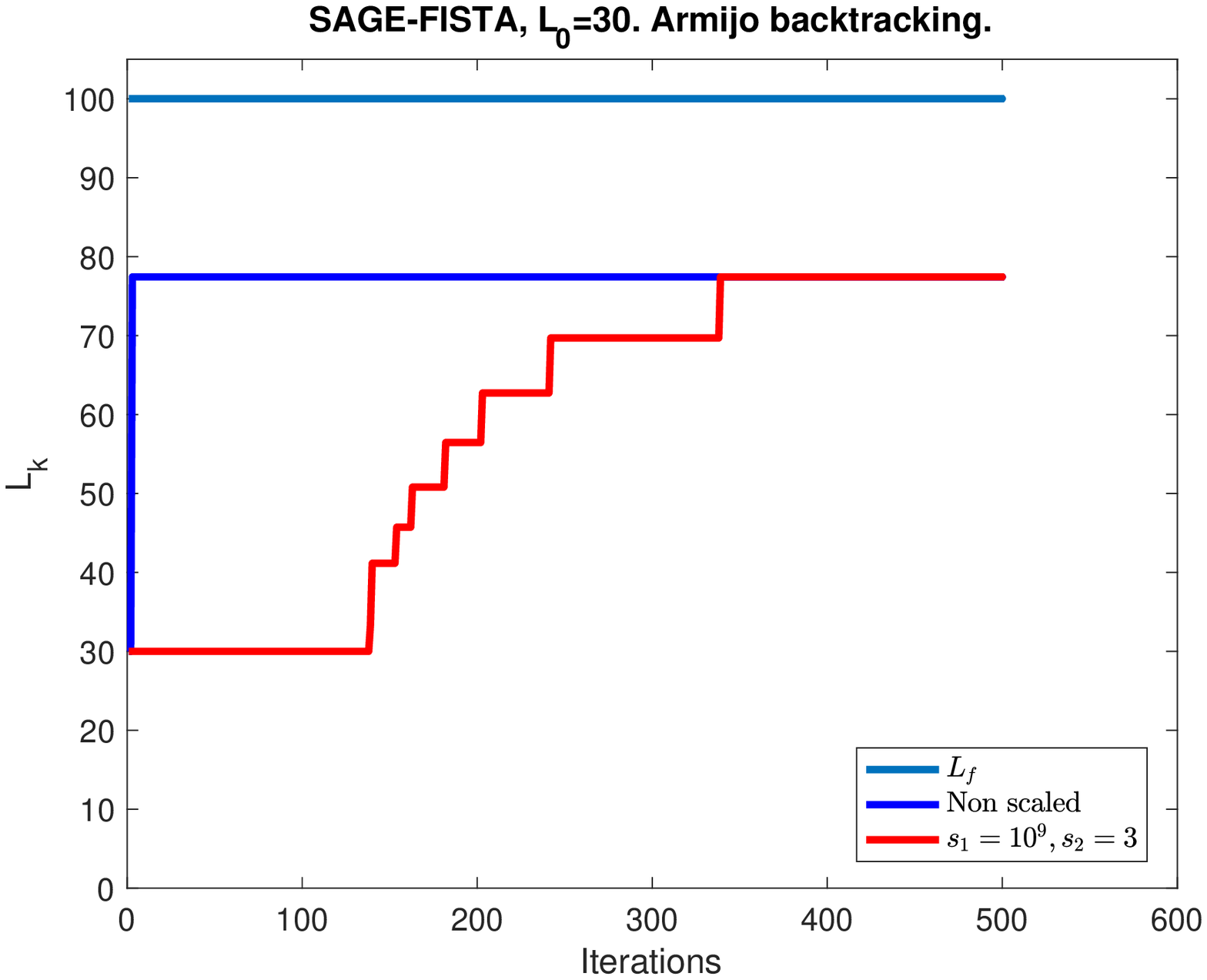} \\
  \includegraphics[height=3.9cm]{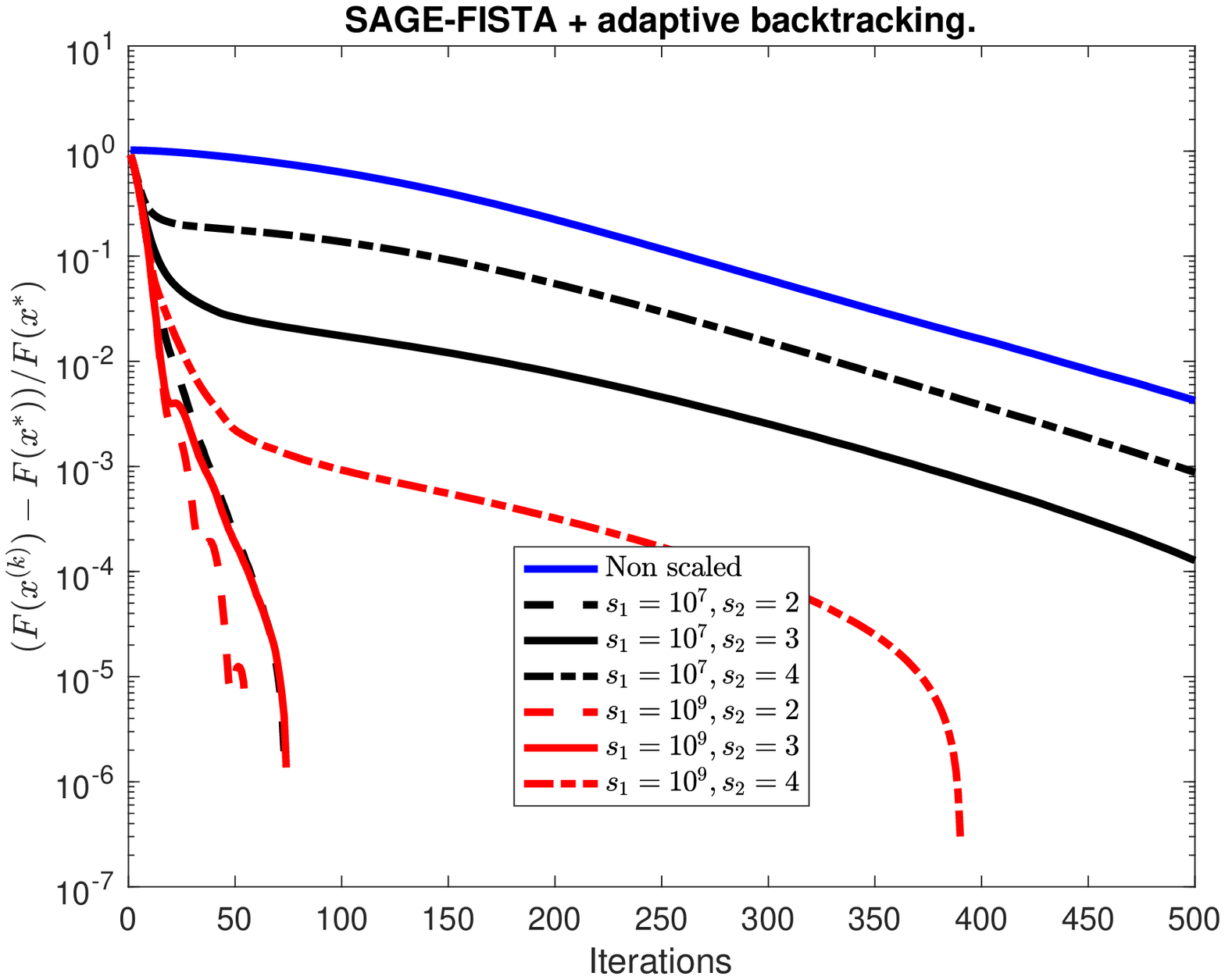} & \includegraphics[height=3.9cm]{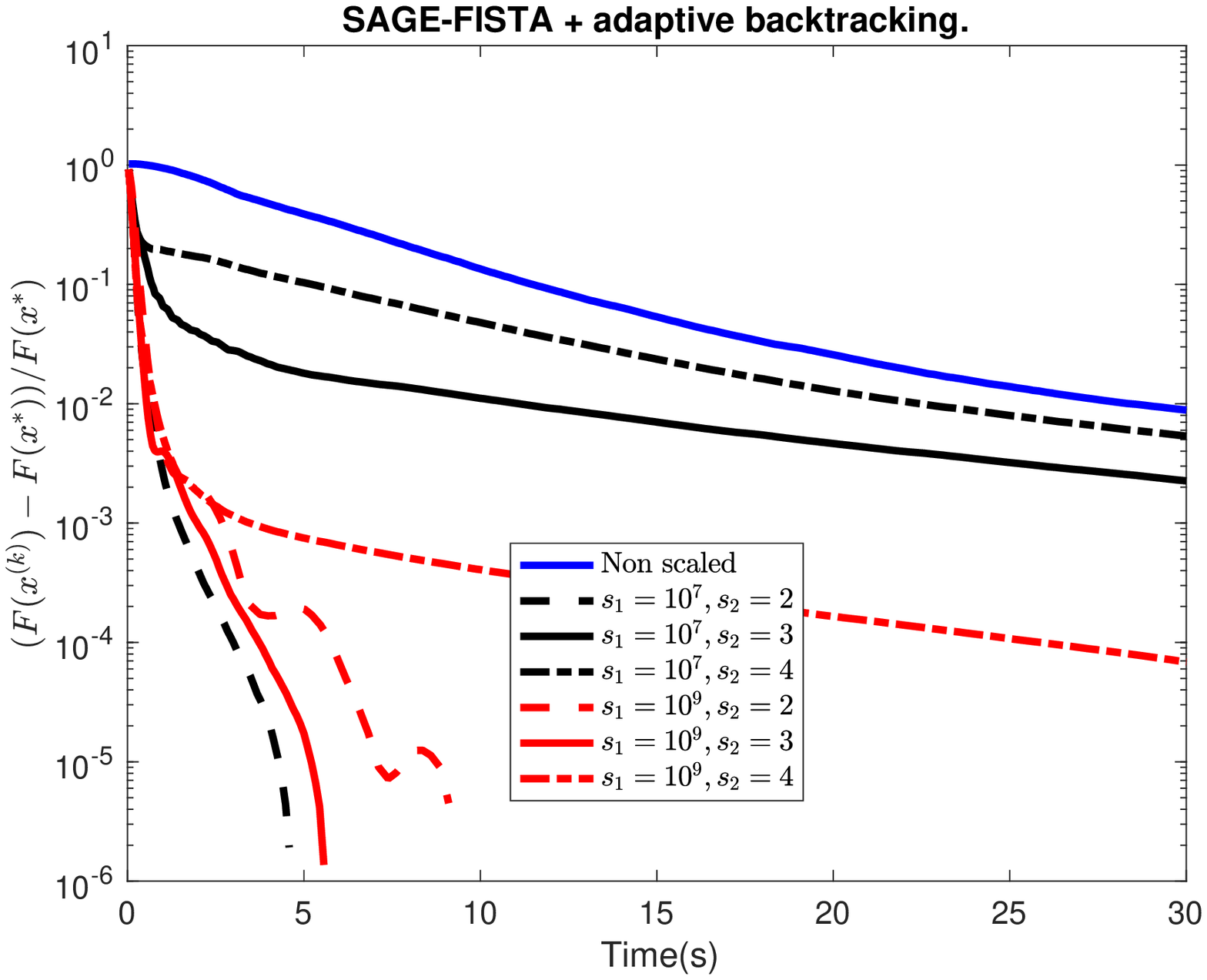} & \includegraphics[height=3.9cm]{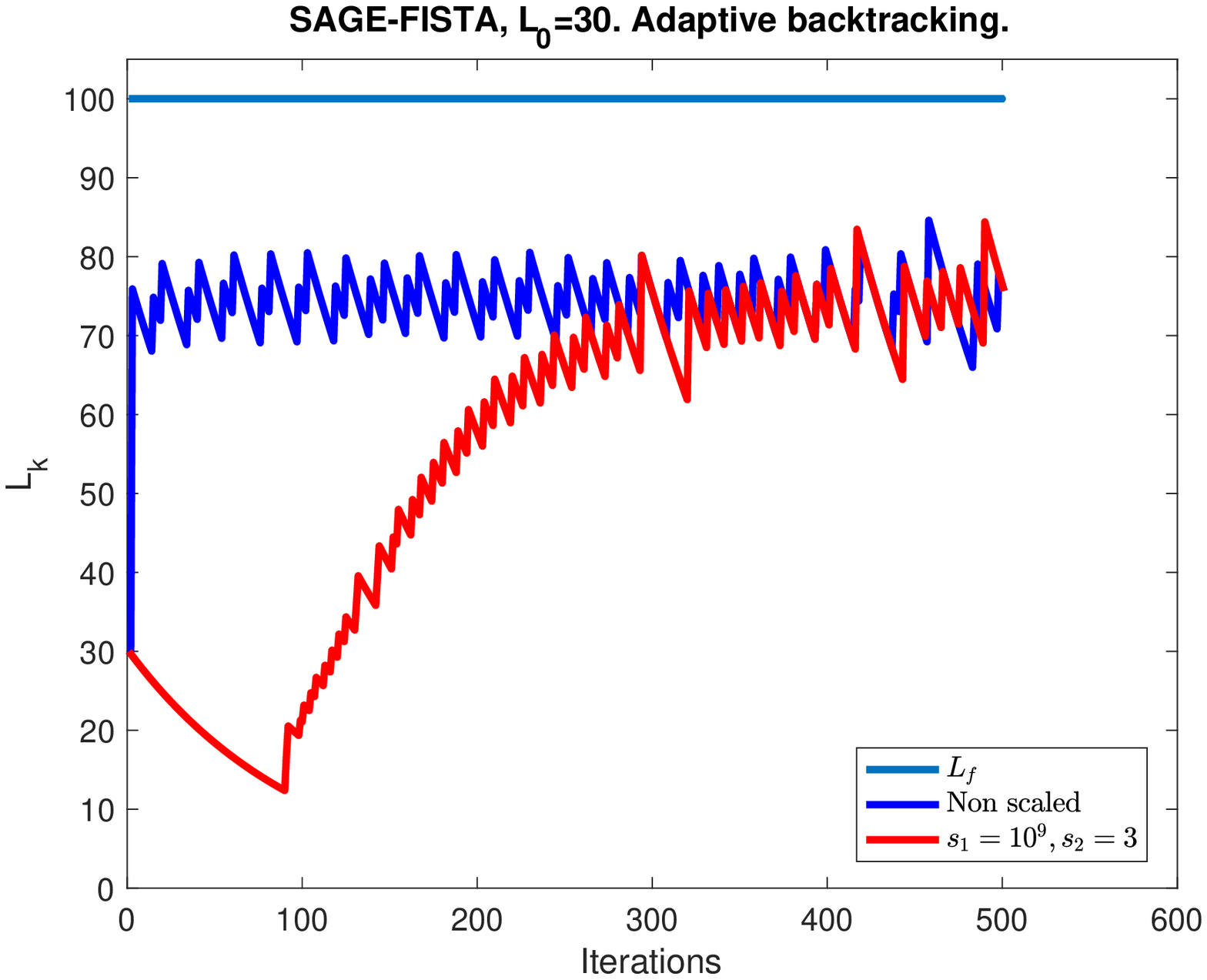} \\
   (a) Relative rates VS it. & (b) Relative rates VS first 30 secs. & (c) $L_k$ estimates.
\end{tabular}
    \caption{Performance of non-scaled VS. scaled SAGE-FISTA with Armijo backtracking (top row) and adaptive backtracking (bottom row) with $L_0=30$ for different choices of $s_1$ and $s_2$ for problem \eqref{eq:wl2_tv} on \texttt{moon} image.}
    \label{fig:TEST1_GFISTA_armijo}
\end{figure}

\paragraph{SAGE-FISTA VS. S-FISTA with backtracking.} In order to show the effectiveness of SAGE-FISTA also when strong convexity is not directly taken into account, we now run the same experiments as above by setting $\mu_f=0$ while keeping all the other parameters fixed. Note that this corresponds to apply the inexact and scaled FISTA (denoted here by S-FISTA) algorithm considered in \cite{Bonettini2018a} in analogous contexts, the novelty being here the use of the adaptive backtracking strategy in comparison to the standard Armijo one.
For S-FISTA, we compute the scaling matrices $\{D_k\}_{k\in\mathbb{N}}$ either according to the variable strategy \eqref{eq:metric_wl2} or setting $D_k\equiv D$ with $D = \text{diag}\left(z+b\right)^{-1}$. This constant choice is motivated by the fact that, for problem \eqref{eq:wl2_tv}, the update of the matrices $D_k$ in \eqref{eq:metric_wl2} depends only on the presence of the parameters $\gamma_k$, which can be possibly kept fixed (for instance by setting $s_1 \gg 1$ and $s_2 =0$), thus imposing a constant Newton-type scaling matrix along the iterations. By Remark \ref{eq:rem3}, this choice still satisfies Assumption \ref{ass:1}, thus the convergence result of Theorem \ref{thm:rate} still applies. Our results are reported in Figure \ref{fig:TEST1_FISTA_adaptive_compScaling}.  As expected, this choice improves even more the convergence rate with respect to the number of iterations, at the price of significantly higher computational costs. Indeed, since the matrices $D_k$ do not converge any longer to the identity matrix, the Lipschitz constant of the inner dual problem \eqref{eq:dual_pb} tends to be bigger, being proportional to $\|D_k^{-1}\|$ (see \cite[Section 4.2]{Bonettini2018a}), thus affecting negatively the inner FISTA subroutine for the computation of the $\epsilon_k-$approximation when the parameter $\epsilon_k$ is small. In support of our remark, we report the total CPU time to execute $500$ iterations: S-FISTA with Armijo backtracking takes 106.5 s (variable metric) and 3591 s (constant metric), whereas S-FISTA with adaptive backtracking takes 238.4 s (variable metric) and 2672.6 s (constant metric).
We conclude that, for suitable choices of $s_1$ and $s_2$, the variable metric strategy \eqref{eq:metric_wl2} represents a good balance between the good convergence properties of Newton methods and the higher computational costs.

\begin{figure}[t!]
         \centering
         \begin{subfigure}[b]{0.45\textwidth}
             \centering
             \includegraphics[height=5cm]{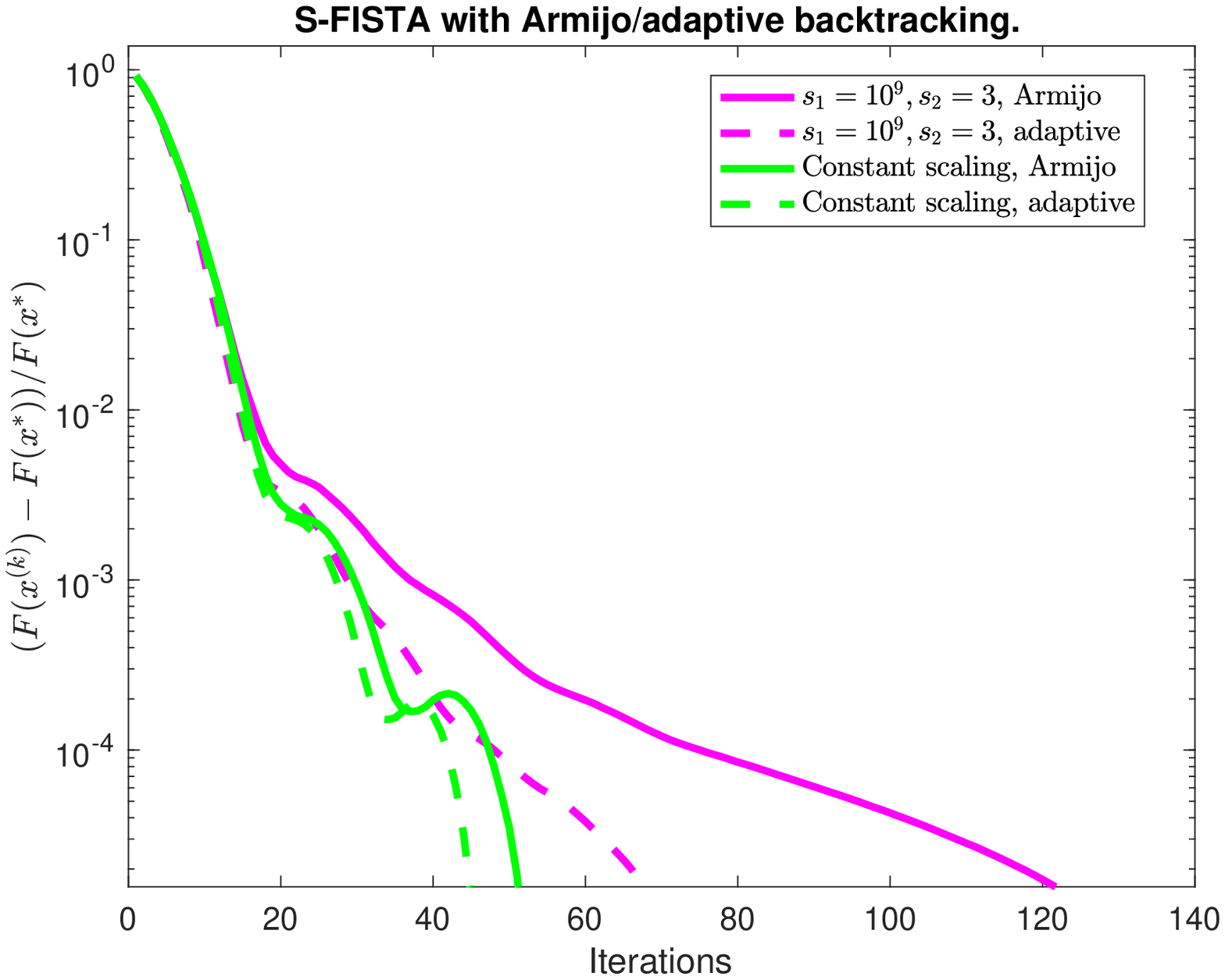}
             \caption{Variable VS. constant metric.}
             \label{fig:variable_constant}
         \end{subfigure}
         \begin{subfigure}[b]{0.45\textwidth}
             \centering
             \includegraphics[height=5cm]{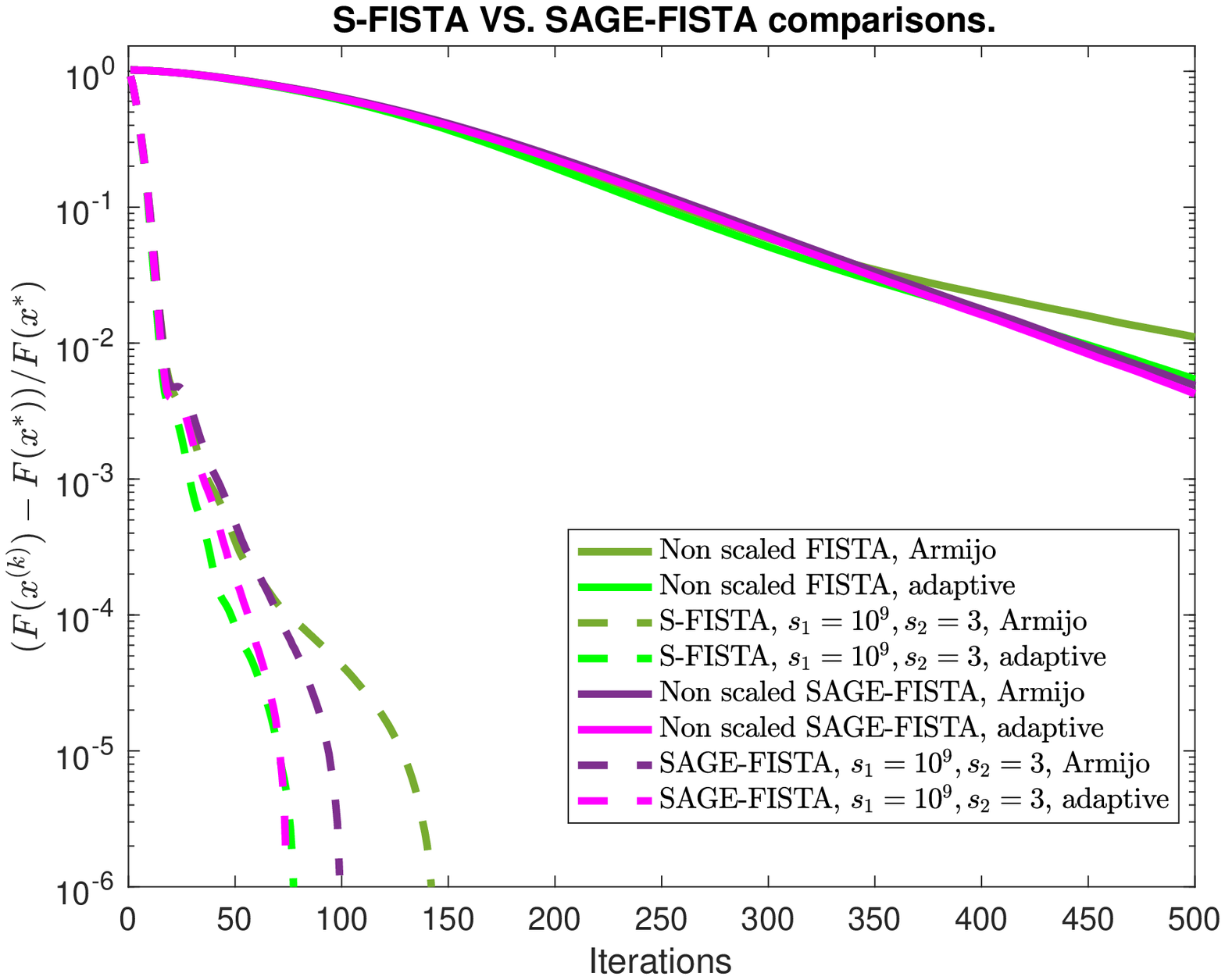}
             \caption{Relative convergence rates comparisons.}
             \label{fig:general_comparisonSFISTA}
        \end{subfigure}
    \caption{Figure \ref{fig:variable_constant}: S-FISTA  with Armijo/adaptive backtracking for variable and constant metric. Figure \ref{fig:general_comparisonSFISTA}: comparison between non-scaled and scaled convex (S-FISTA) and strongly-convex (SAGE-FISTA) algorithms with Armijo/adaptive backtracking. Problem \eqref{eq:wl2_tv} on \texttt{moon} image. }
   \label{fig:TEST1_FISTA_adaptive_compScaling}
\end{figure}

As a final comparison, we report in Figure \ref{fig:general_comparisonSFISTA} the relative convergence rates for the various scaled and non-scaled versions of S-FISTA and SAGE-FISTA, combined with either an Armijo or an adaptive backtracking strategy. 
As it is expected, encoding explicitly the prior knowledge of strong convexity (SAGE-FISTA) improves convergence rates in comparison with its convex variant (S-FISTA). As far as the two backtracking strategies are concerned, the use of an adaptive strategy favoring non-monotone adjustments of the values $L_k$ further improves convergence speed with respect to standard monotone strategies, as already observed in \cite{Calatroni-Chambolle-2019} for GFISTA.

\subsection{Poisson image deblurring with strongly convex TV regularization} \label{sec:Pdeb}

As a second example, we consider a TV image deblurring problem for images corrupted by Poisson noise. For this test, we consider the KL divergence functional \eqref{eq:KL_def} and combine it with a TV-type regularization term, where a quadratic perturbation weighted by a parameter $\varepsilon\ll 1$ is added to make the composite problem strongly-convex. By further encoding also a non-negativity constraint, the final problem reads
\begin{equation}  \label{eq:pb_KL_TVsc}
    x^* = \argmin_{x\in\mathbb{R}^n} \left\{ F(x): = KL(Hx+b;z) + \lambda TV(x) + \frac{\varepsilon}{2} \| x\|_2^2 + \iota_{Y}(x)\right\}.
\end{equation}
Recalling splitting \eqref{minf}, we set here:
\begin{equation}  \label{eq:splitting_PoisDeb}
    f(x):=KL(Hx+b;z),\qquad g(x):=\lambda TV(x) + \frac{\varepsilon}{2} \| x\|_2^2  + \iota_{Y}(x).
\end{equation}
It is well-known that $\nabla f$ is Lipschitz continuous on $Y$ and that an over-estimation of its Lipschitz constant $L_f$ can be provided \cite{Harmany12}. Furthermore, the function $g$ is $\varepsilon$-strongly convex. We thus have:
\begin{equation}  \label{eq:parameters_PoisDeb}
    \nabla f(x) = H^T e - H^T \left( \frac{z}{Hx + b}\right),\qquad L_f=\frac{\max z_i}{b^2} \max (H^T e) \max(H e), \qquad \mu_g=\varepsilon,
\end{equation}
where $e\in\mathbb{R}^n$ is a vector whose components are all equal to one. Due to \eqref{eq:parameters_PoisDeb}, we observe that the variability in the estimation of $L_f$ (see Table \ref{table:details_figs}) depends on the range on the data $z$, which in our examples is intentionally allowed to vary.
By assuming reflexive boundary conditions, the matrix-vector products defined in terms of $H$ and $H^T$ can be performed efficiently via discrete cosine transform. Under these choices, we apply the SAGE-FISTA Algorithm \ref{algo:GFISTA} with $\mu_f=0$ and $\mu=\mu_g$ on the \texttt{phantom}, \texttt{micro} and \texttt{mri} images reported in Figure \ref{fig:test_images}. For each image, the choice of model parameters $\lambda$ and $\varepsilon$ is reported in Table \ref{table:details_figs}. As far as the algorithmic parameters are concerned, we set for all the following tests \texttt{maxiter}$=300$, \texttt{max\_bt}$=10$, $\rho=0.85$ and $\delta\in\left\{1,0.98\right\}$ depending on whether Armijo/adaptive backtracking is considered, respectively. As for the previous test, we set $t_0=1.01$ and $x^{(0)}=z$. SAGE-FISTA is run with an initial estimate of $L_f$ chosen as $L_0=0.1$ for \texttt{phantom}, $L_0=1$ for \texttt{micro} and $L_0=100$ for \texttt{mri}.
%As far as the choice of the sequence $\left\{\epsilon_k\right\}_{k\in\mathbb{N}}$ is concerned, we observe that in this test we are in the condition of Corollary \ref{cor:guarantee2} which can be applied as an alternative to Corollary \ref{cor:guarantee1} to avoid the re-computation of  the tolerance values $\epsilon_k$ at each backtracking step. Note, however, that this requires the knowledge of the value $\eta_{inf} = \inf_{k}~ \eta_{inf}^k$. Whenever a suitable update is employed, the sequence $\left\{\eta_{inf}^k\right\}_{k\in\mathbb{N}}$ is non-decreasing, hence $\eta_{inf}=\eta_{inf}^0$. In such case, a valid choice in \eqref{eq:prefixed2} of the elements $a$ and $b$ is thus  $a=\frac{\delta}{4}\min\{1,\eta^0_{inf}/(\tau_0\mu_g)\}$ and $b= \frac{\sqrt{\delta}}{2}$, which may be very small (for instance when $\eta_{inf}^0\ll 1$ or when $\tau_0 \gg 1$), thus enforcing a very strict $\epsilon_k$-approximation of proximal points already from the very first iterations.

Regarding the sequence of scaling matrices $\left\{D_k\right\}_{k\in\mathbb{N}}$, we consider the diagonal split-gradient strategy \eqref{eq:scaling_general} setting $V(y^{(k)})=H^T e$. In this case, unlike in \eqref{eq:metric_wl2}, the scaling matrix $D_k$ explicitly depends on the extrapolated point $y^{(k)}$.
%which corresponds to:
%\begin{equation}  \label{eq:metric_KL}
%    D_k = \text{diag}\left(\max\left(\frac{1}{\gamma_k},\min\left(\gamma_k, \frac{y^{(k)}}{H^T e}\right)\right)\right)^{-1},
%\end{equation}
%where the thresholding parameters $\gamma_k$ are defined as in \eqref{eq:gamma_k_pb1}. 

Concerning the sequence $\left\{\epsilon_k\right\}_{k\in\mathbb{N}}$, we apply again the result provided by Corollary 3.1, with the same choice of $\{\theta_k\}_{k\in\mathbb{N}}$ and $\{a_k\}_{k\in\mathbb{N}}$ adopted in the previous test. As far as the inexact computation of the proximal operator of $g$ is concerned, we start noticing that the function $g$ in \eqref{eq:splitting_PoisDeb} can be cast in the form \eqref{eq:g_form} with the choices $\phi_i(v) = \lambda \|v\|_2, M_i =  \nabla_i, i=1,\ldots,n$ and $\psi(x) = \frac{\varepsilon}{2} \| x\|_2^2  + \iota_{Y}(x) $. Due to the presence of the quadratic perturbation, the computation of the sequence $x^{(k,l)}$ in \eqref{eq:primal_seq} can be performed similarly as in \eqref{eq:projection_psi}, but after a suitable rescaling defined in terms of the operators $D_k$. In particular, denoting by $\left\{w^{(k,l)}\right\}_{l\in\mathbb{N}}$ the dual sequence, we have that the inner primal iterates can be computed as follows:
\begin{align}
x^{(k,l)}  & =  \text{prox}^{D_k}_{\tau_k \psi}\left(y^{(k)}-\tau_k D_{k}^{-1}\left(\nabla f(y^{(k)})   + M^* w^{(k,l)}\right) \right)  \label{eq:primal_sequence_PDEB1} \\
& = \text{prox}^{D_k+\tau_k\varepsilon\mathcal{I}}_{\tau_k \iota_Y}\left(  \left(\frac{D_k}{D_k + \tau_k\varepsilon\mathcal{I}}\right)\left(y^{(k)}-\tau_k D_{k}^{-1}\left(\nabla f(y^{(k)})   + M^* w^{(k,l)} \right) \right) \right) \label{eq:primal_sequence_PDEB2} \\
&  = P_{Y,D_k}  \left( \left(\frac{D_k}{D_k + \tau_k\varepsilon\mathcal{I}}\right)\left(y^{(k)}-\tau_k D_{k}^{-1}\left(\nabla f(y^{(k)})   + M^* w^{(k,l)} \right) \right) \right),\qquad \forall l\in\mathbb{N}, \notag
\end{align}
where the fraction $D_k/(D_k + \tau_k\varepsilon\mathcal{I})$ has to be intended element-wise. We report the proof of the result that guarantees the equality between \eqref{eq:primal_sequence_PDEB1}  and  \eqref{eq:primal_sequence_PDEB2}  in Lemma \ref{lemma:rescaling} in Appendix \ref{appendix:1}.

We now report the results obtained by the use of SAGE-FISTA  with Armijo and adaptive backtracking in correspondence of the images \texttt{phantom} (Figure \ref{fig:phantom_GFISTA_armijo}) \texttt{micro} (Figure \ref{fig:micro_GFISTA_armijo}) and \texttt{mri} (Figure \ref{fig:mri_GFISTA_armijo}). For all the experiments, we observe a clear advantage in the use of the variable scaling algorithm in comparison to its non-scaled variant. This is particularly clear from plots (a) in the Figures, where relative error is shown to decay much faster along the iterations. Regarding the computational times (Plots (b)), we notice that, depending on the particular data at hand, the performance may vary, especially depending on the the choice of the parameters $s_1$ and $s_2$. Overall, however, we observe that the use of a scaling procedure allows to compute highly accurate solutions in a limited amount of CPU time. This is particularly useful in applications such as medical imaging  where the need of fast image analysis strongly benefits from the ability of computing an accurate result in a pre-determined amount of time (usually 5/10 secs). As far as the choice of the backtracking strategy is concerned, we observe from the tests performed on the \texttt{micro} and \texttt{mri} images that whenever the initial value $L_0=1/\tau_0$ is too large, the use of a monotone Armijo strategy does not allow for its adjustment. In this case, even in the presence of an appropriate scaling strategy, convergence speed may suffer. The use of an adaptive backtracking corrects this inconvenience by adjusting the values  $L_k$, which corresponds to considering larger step-sizes $\tau_k$ favoring faster convergence. This is particularly evident for problems like  \eqref{eq:pb_KL_TVsc}, where the estimation $L_f$ in \eqref{eq:parameters_PoisDeb} significantly overestimates the admissible upper value for which convergence is guaranteed.

\begin{figure}[t!]
\hspace{-5mm}
\begin{tabular}{c@{}c@{}c}
     \includegraphics[height=3.9cm]{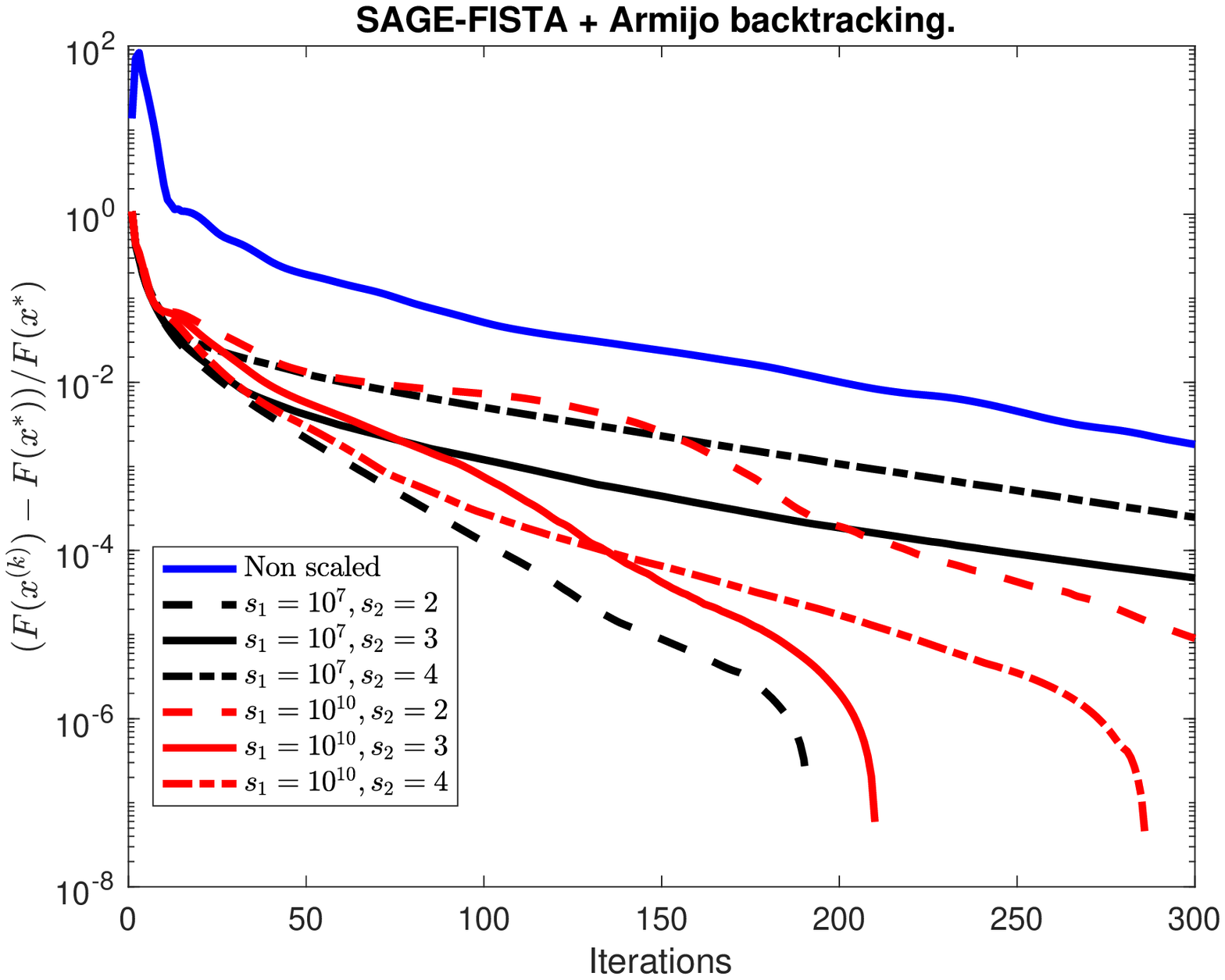} &  \includegraphics[height=3.9cm]{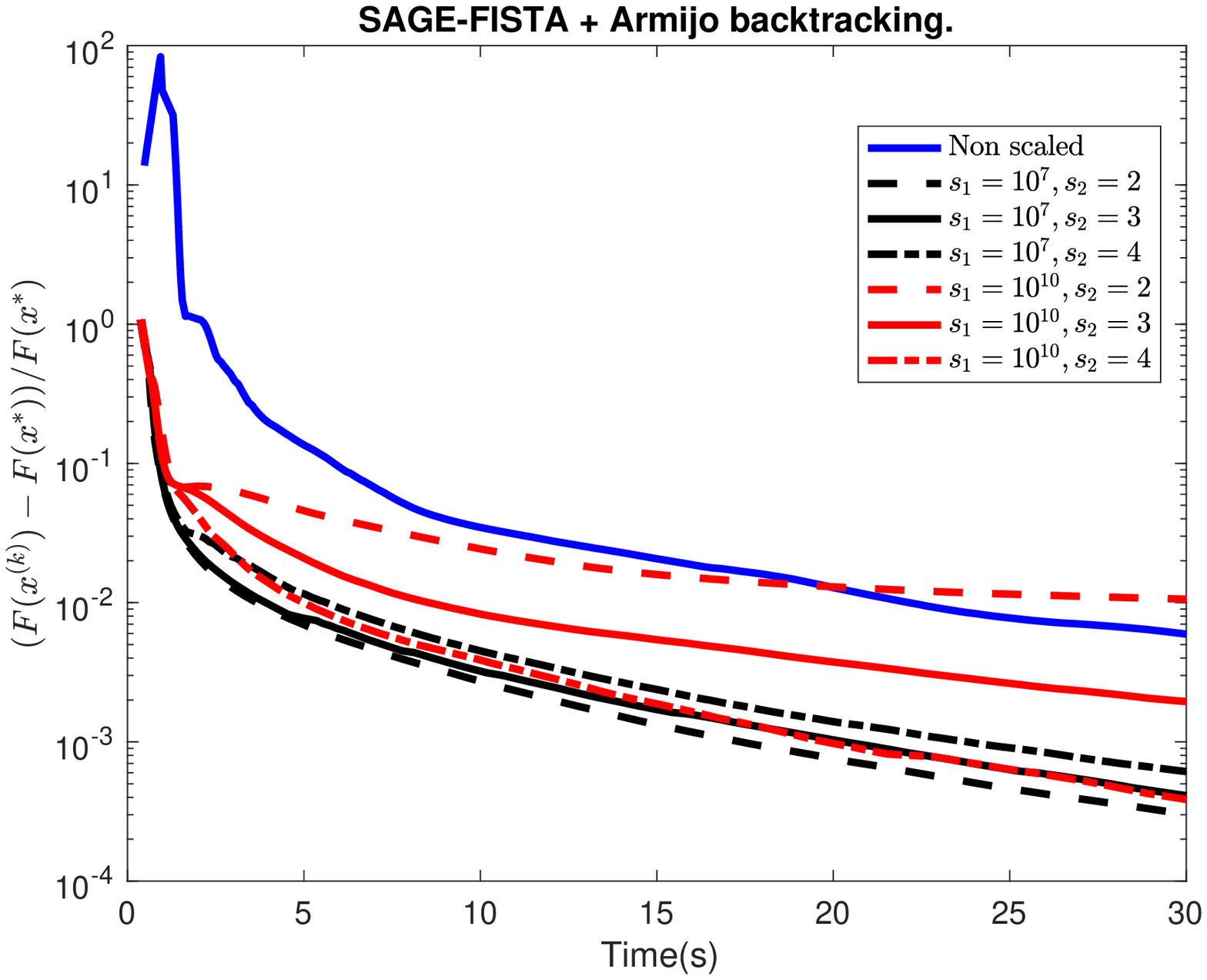} & \includegraphics[height=3.9cm]{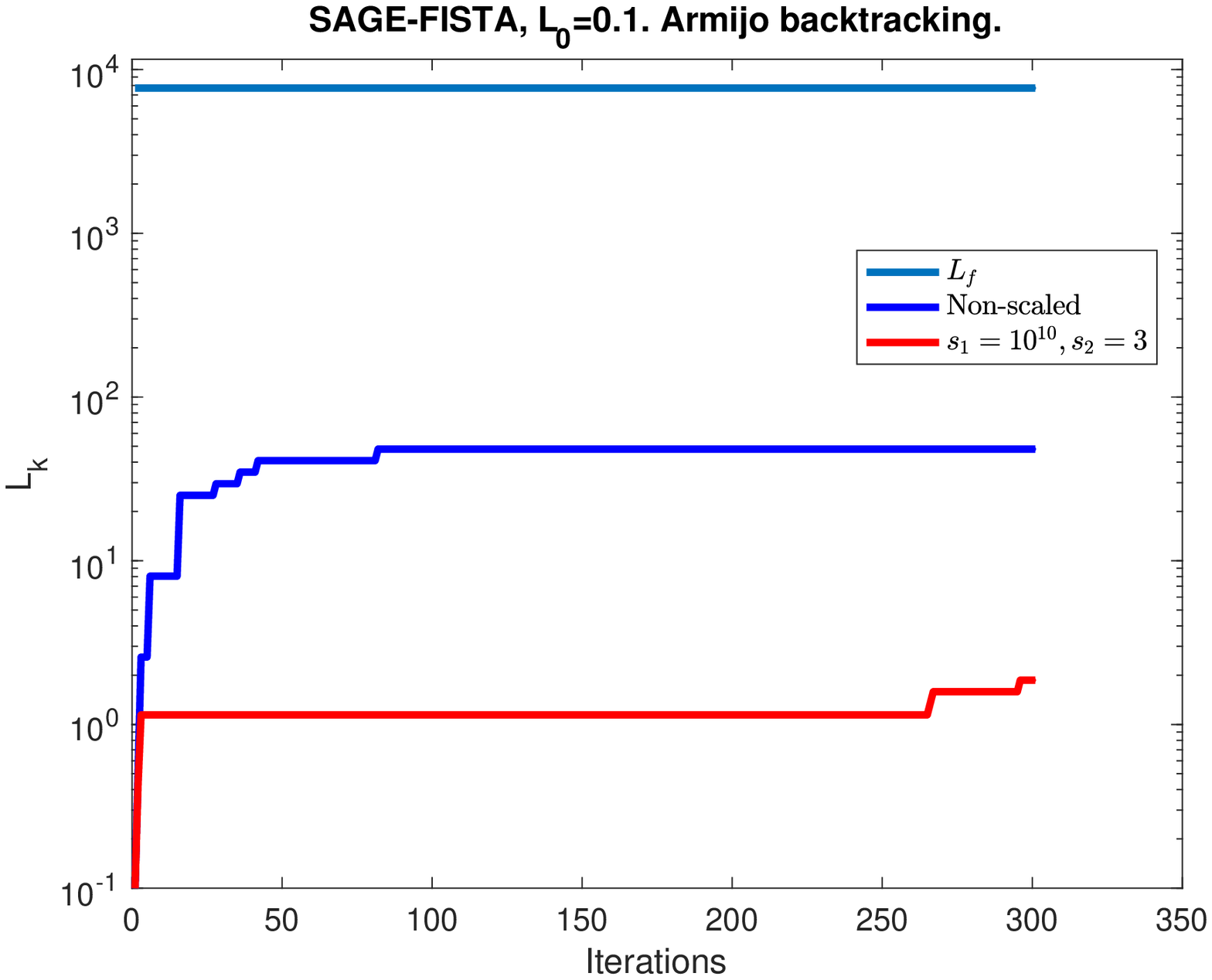} \\
     \includegraphics[height=3.9cm]{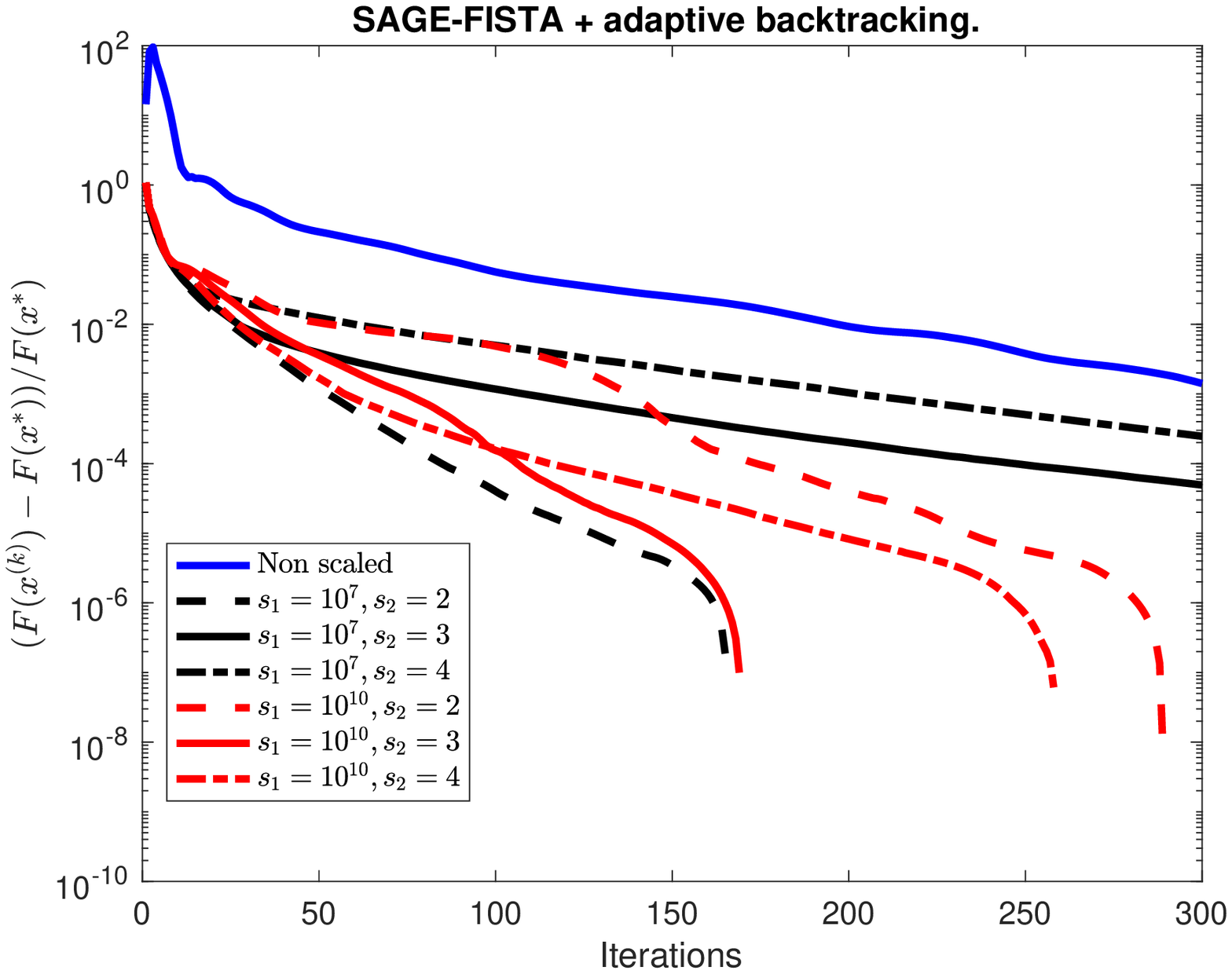} & \includegraphics[height=3.9cm]{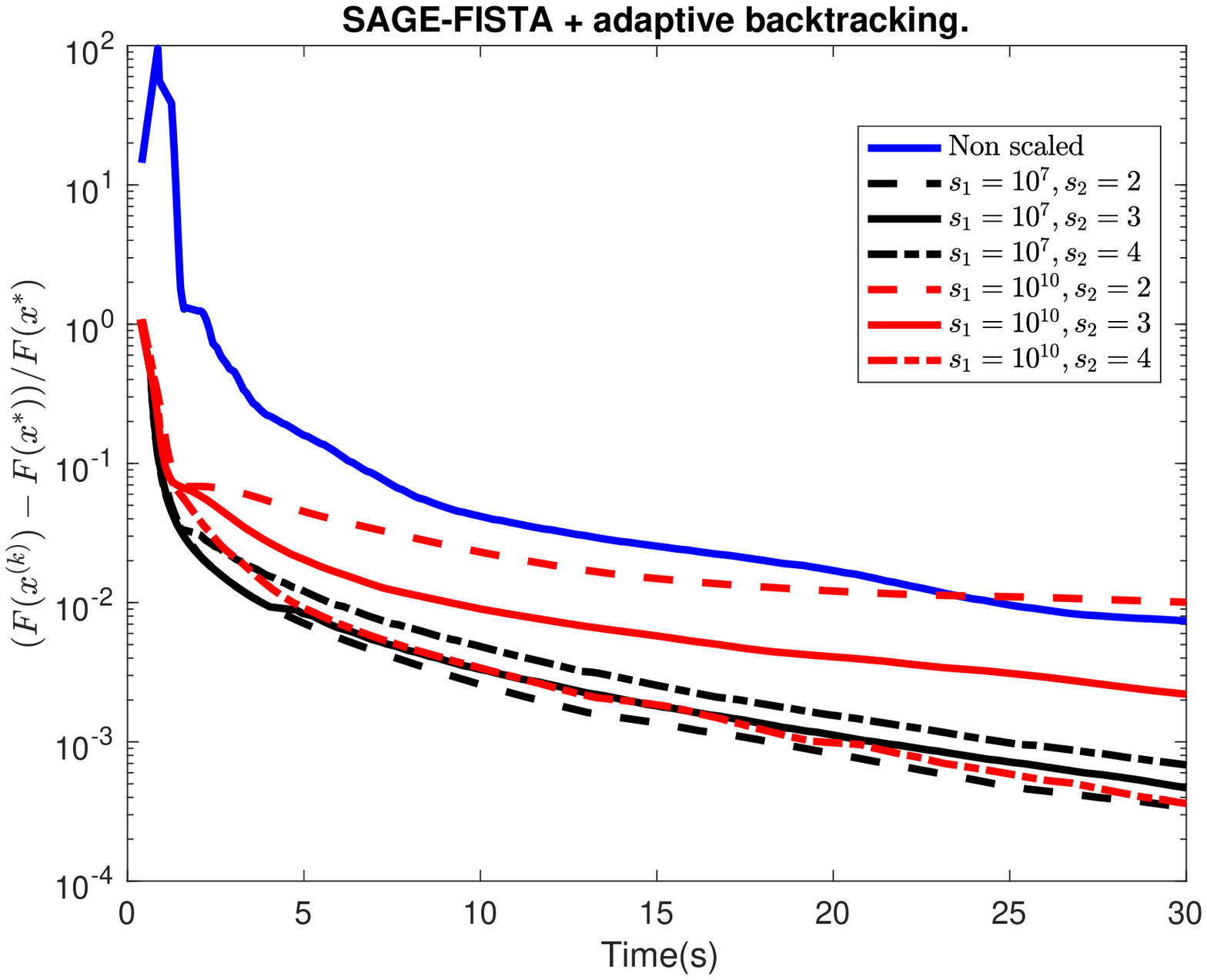} & \includegraphics[height=3.9cm]{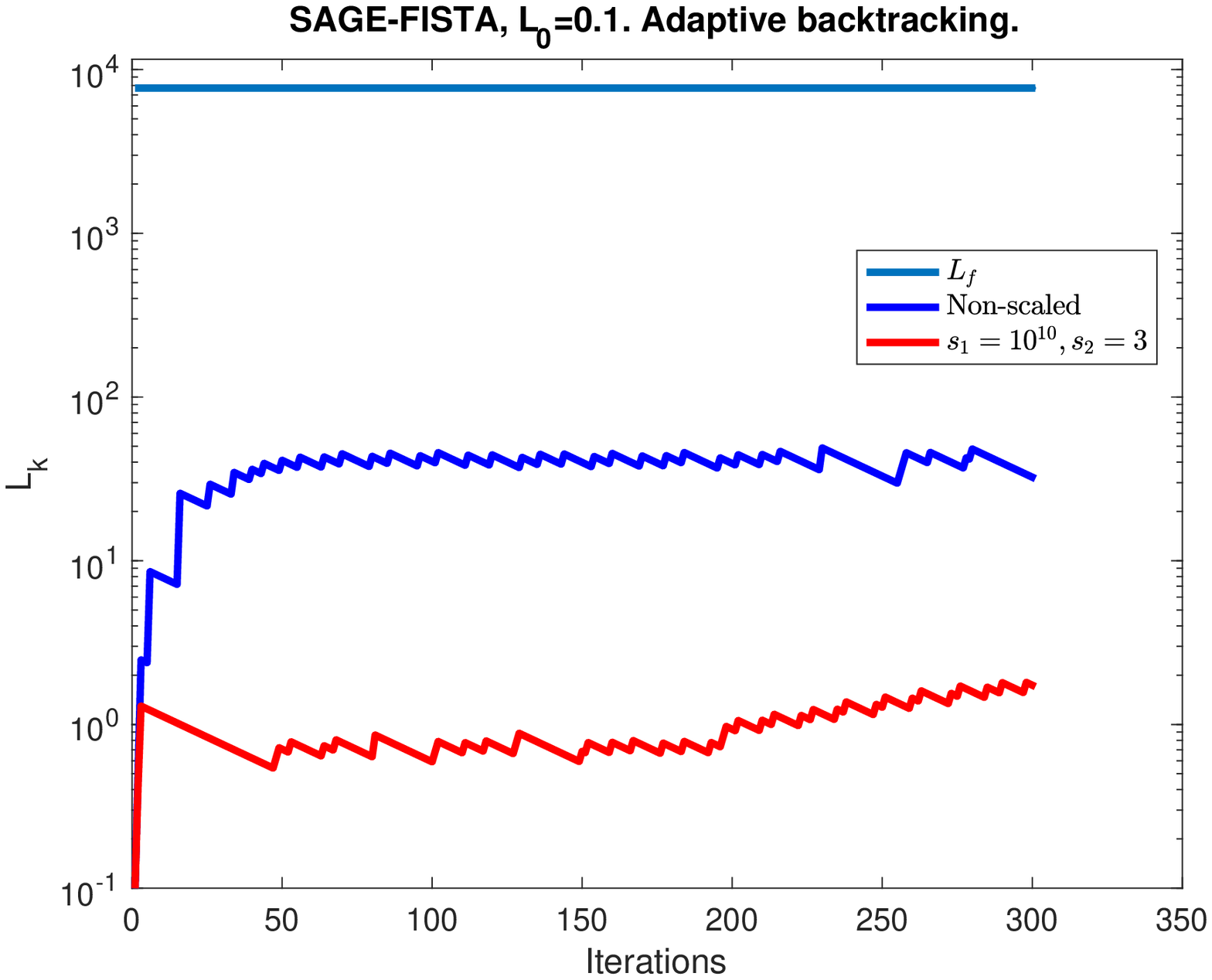}\\
     (a) Relative rates VS it. & (b) Relative rates VS first 30 secs. & (c) $L_k$ estimates.  
\end{tabular}
\caption{Performance of SAGE-FISTA with Armijo backtracking (top row) and adaptive-backtracking (bottom row) for different choices of $s_1$ and $s_2$ for problem \eqref{eq:pb_KL_TVsc} on \texttt{phantom} image.}
    \label{fig:phantom_GFISTA_armijo}
\end{figure}

\begin{figure}[t!]
\hspace{-5mm}
\begin{tabular}{c@{}c@{}c}
     \includegraphics[height=3.9cm]{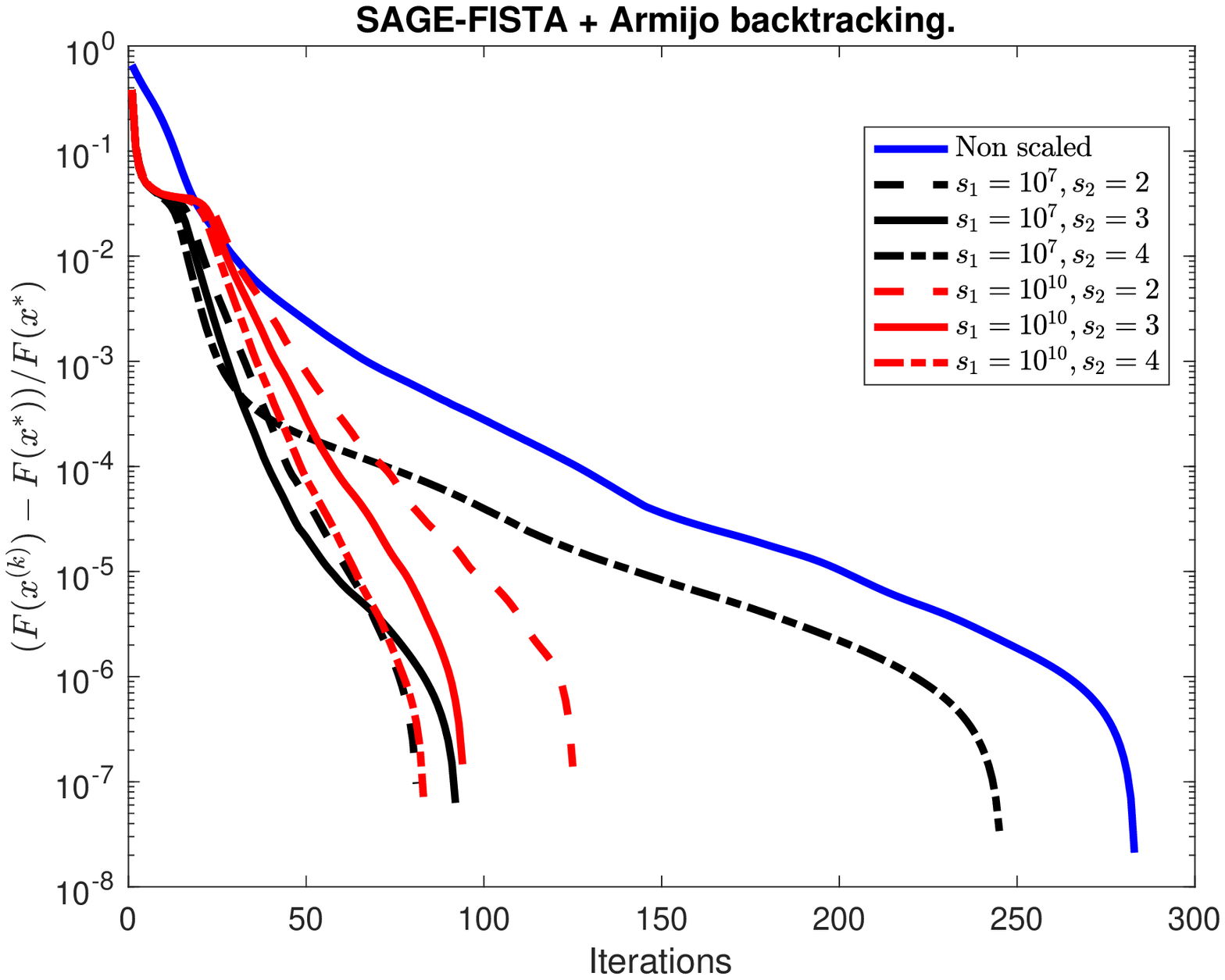} &  \includegraphics[height=3.9cm]{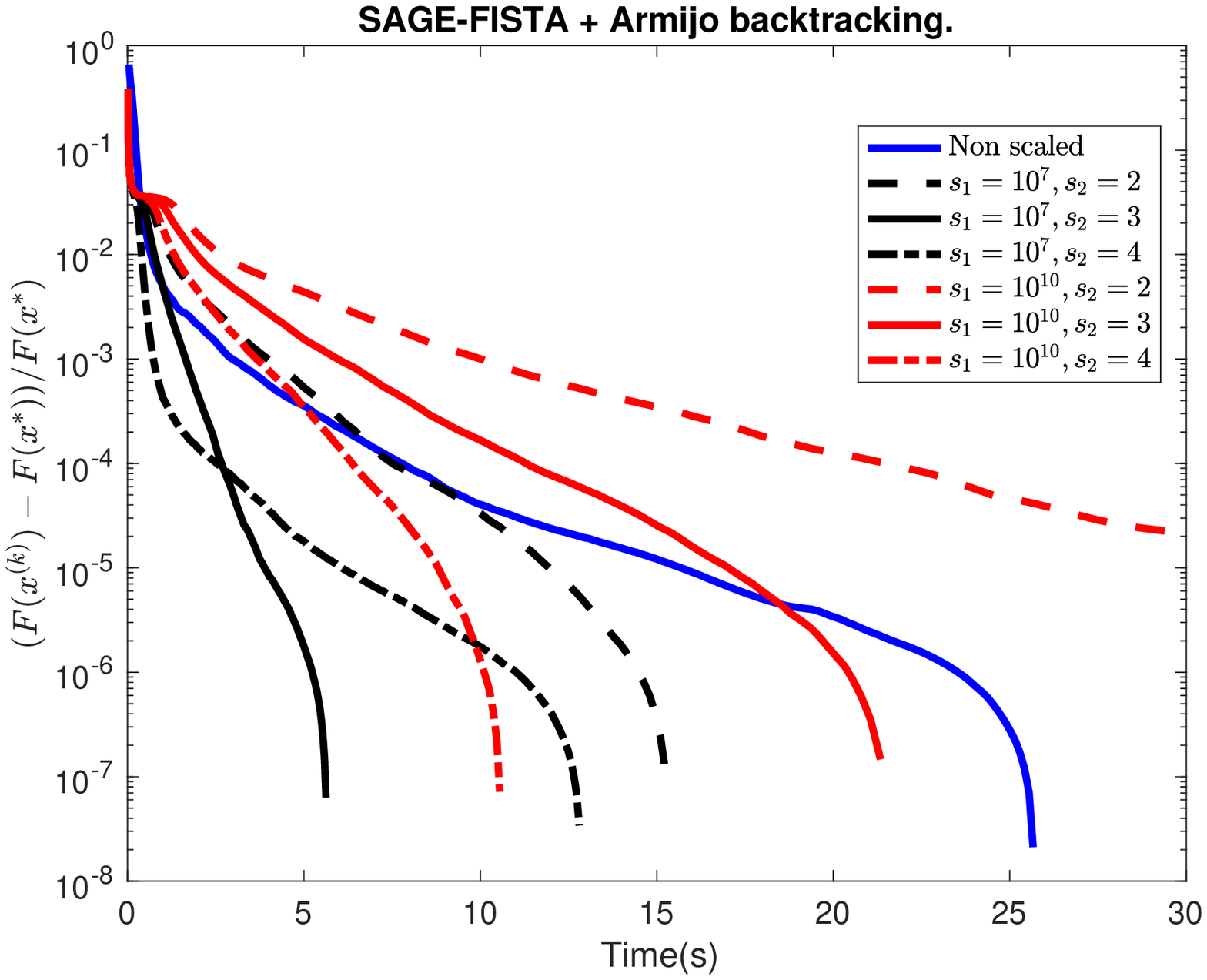} & \includegraphics[height=3.9cm]{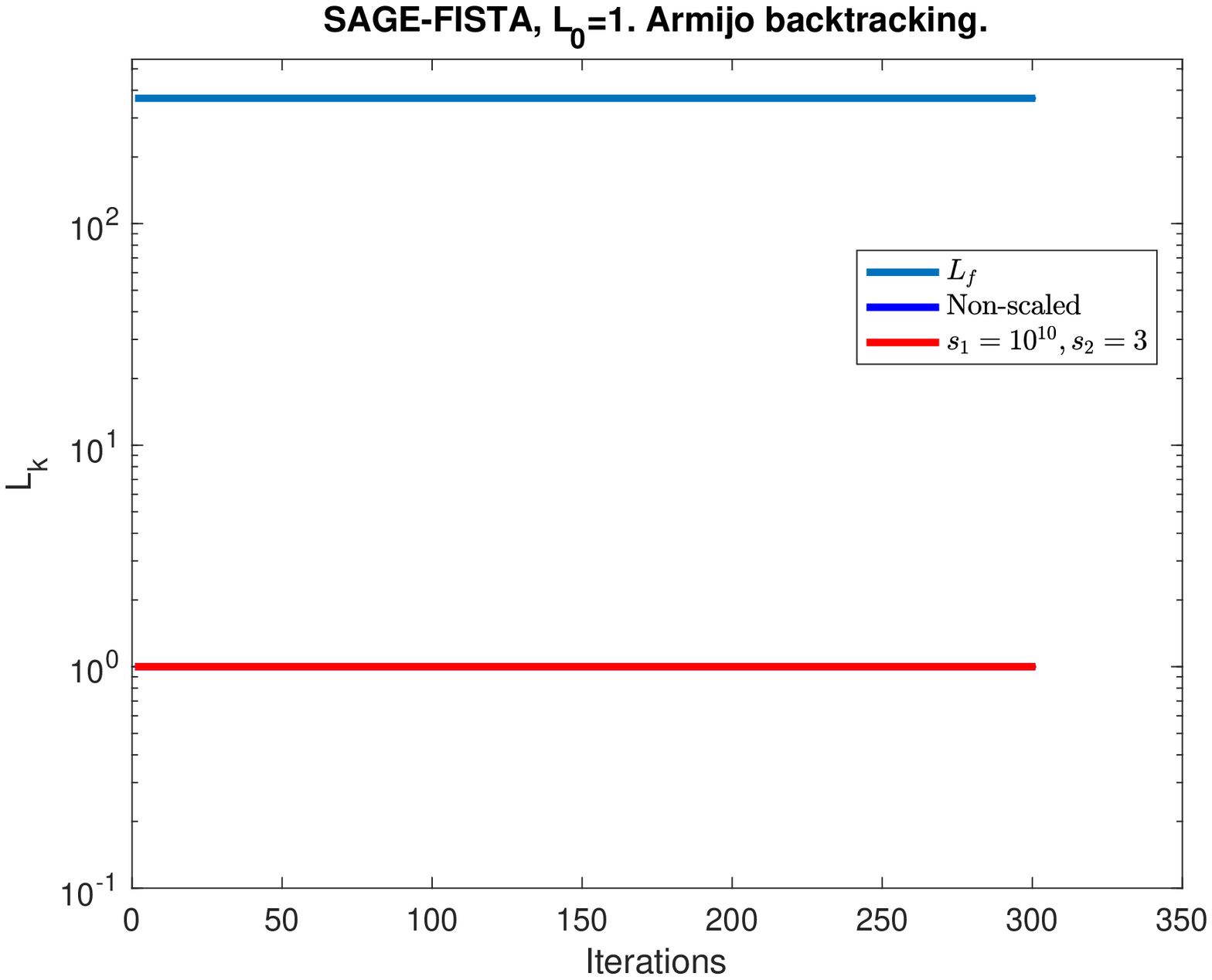} \\
     \includegraphics[height=3.9cm]{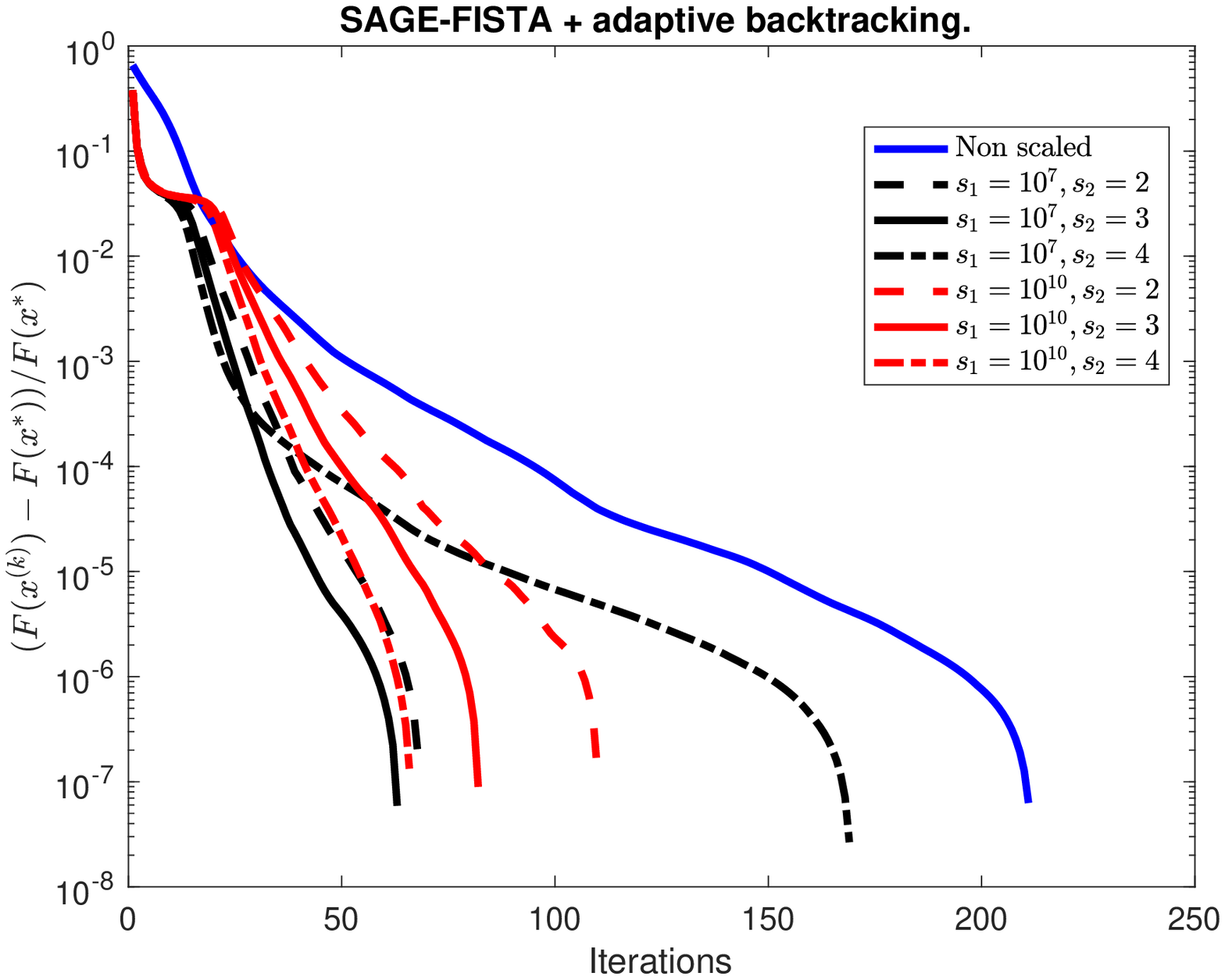} & \includegraphics[height=3.9cm]{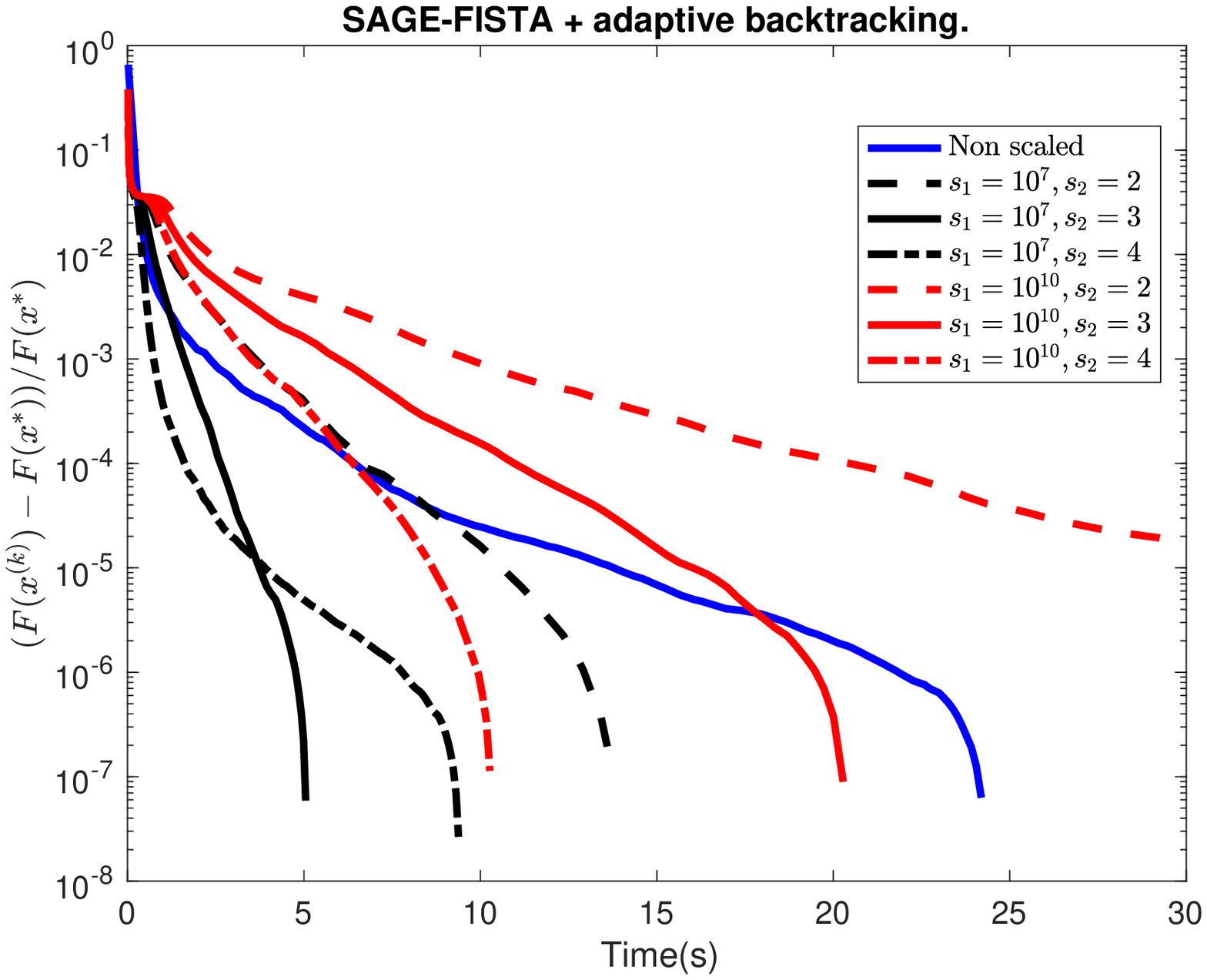} & \includegraphics[height=3.9cm]{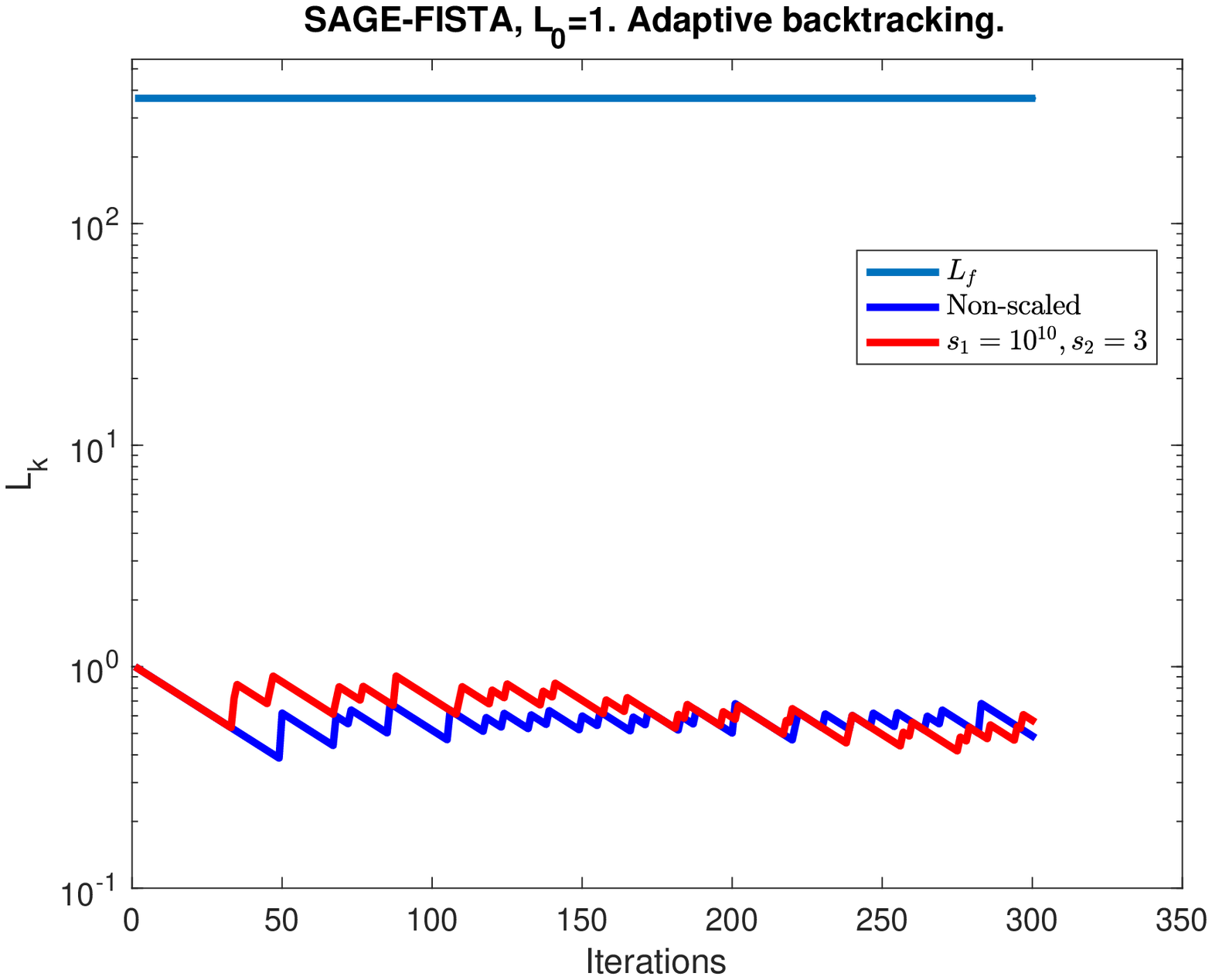}\\
     (a) Relative rates VS it. & (b) Relative rates VS first 30 secs. & (c) $L_k$ estimates.  
\end{tabular}
\caption{Performance of SAGE-FISTA with Armijo backtracking (top row) and adaptive-backtracking (bottom row) for different choices of $s_1$ and $s_2$ for problem \eqref{eq:pb_KL_TVsc} on \texttt{micro} image.}
    \label{fig:micro_GFISTA_armijo}
\end{figure}

\begin{figure}[t!]
\hspace{-5mm}
\begin{tabular}{c@{}c@{}c}
     \includegraphics[height=3.9cm]{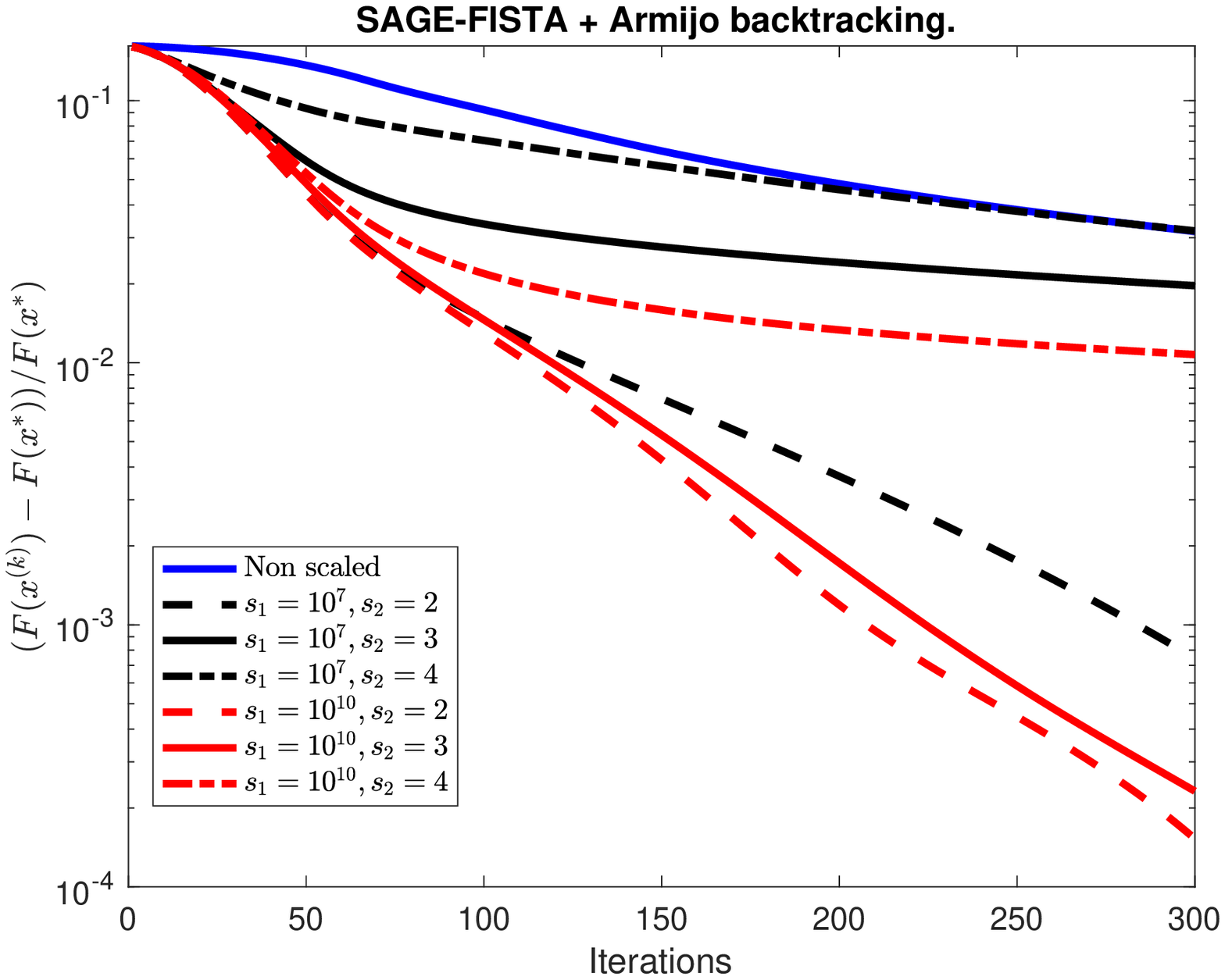} &  \includegraphics[height=3.9cm]{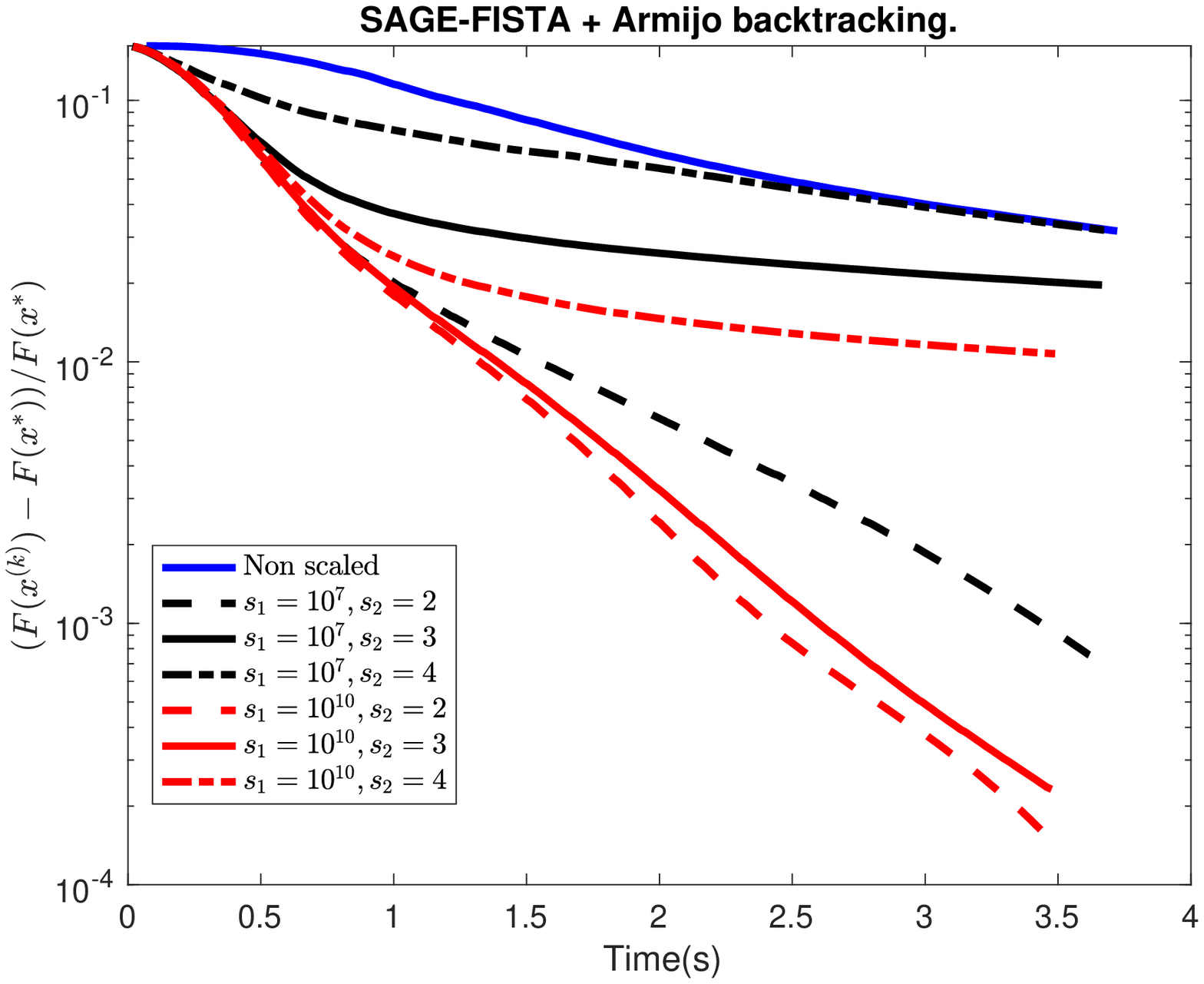} & \includegraphics[height=3.9cm]{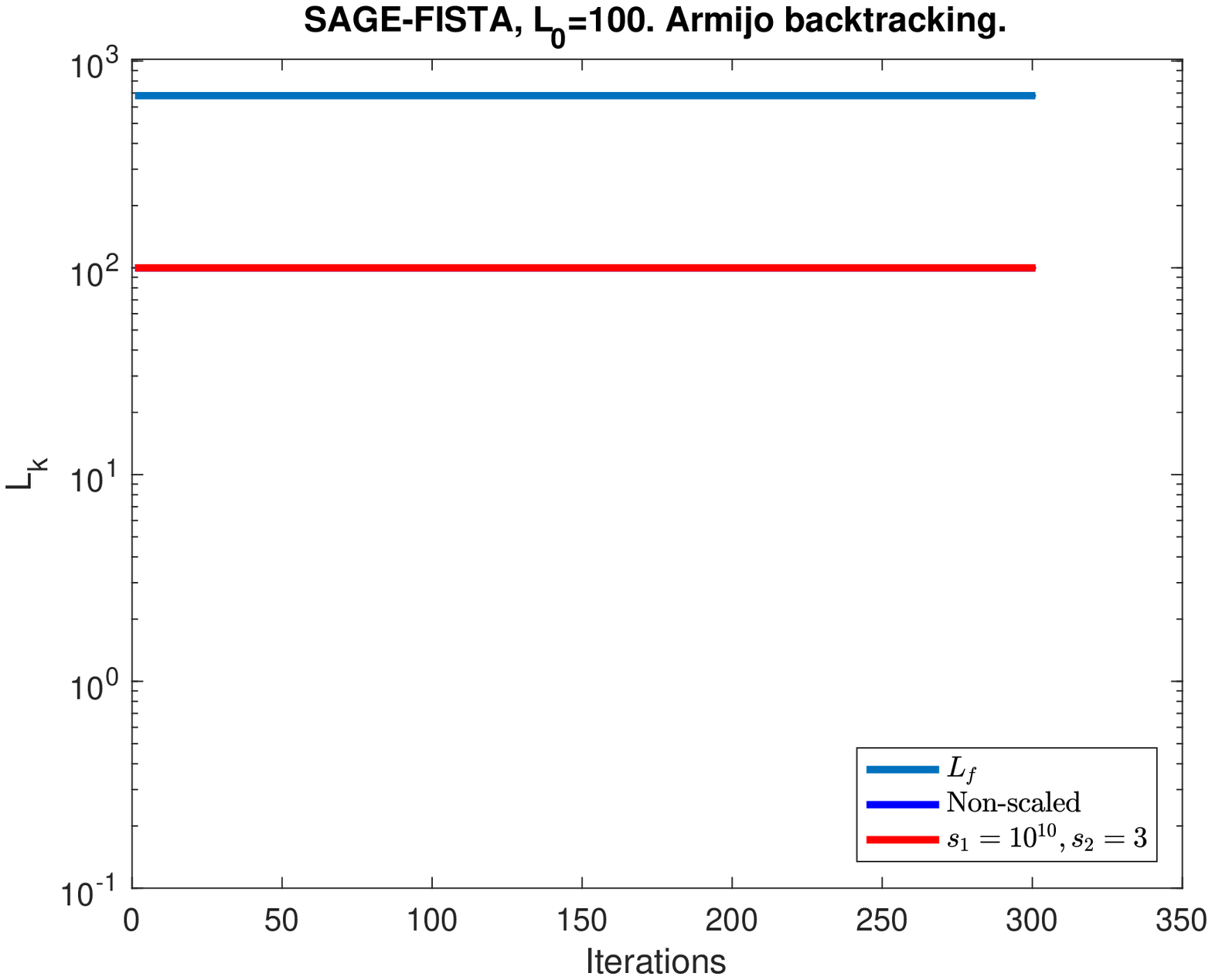} \\
     \includegraphics[height=3.9cm]{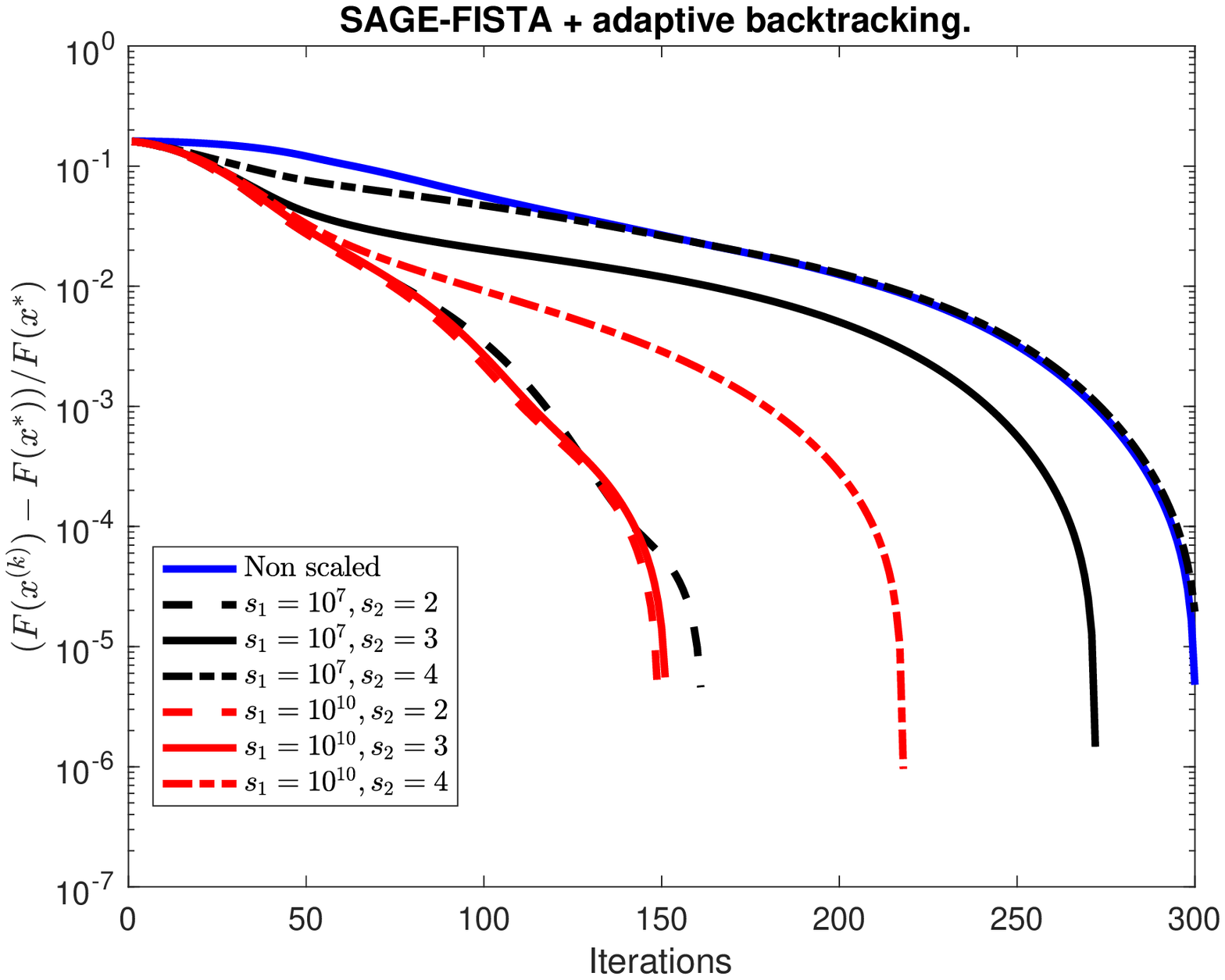} & \includegraphics[height=3.9cm]{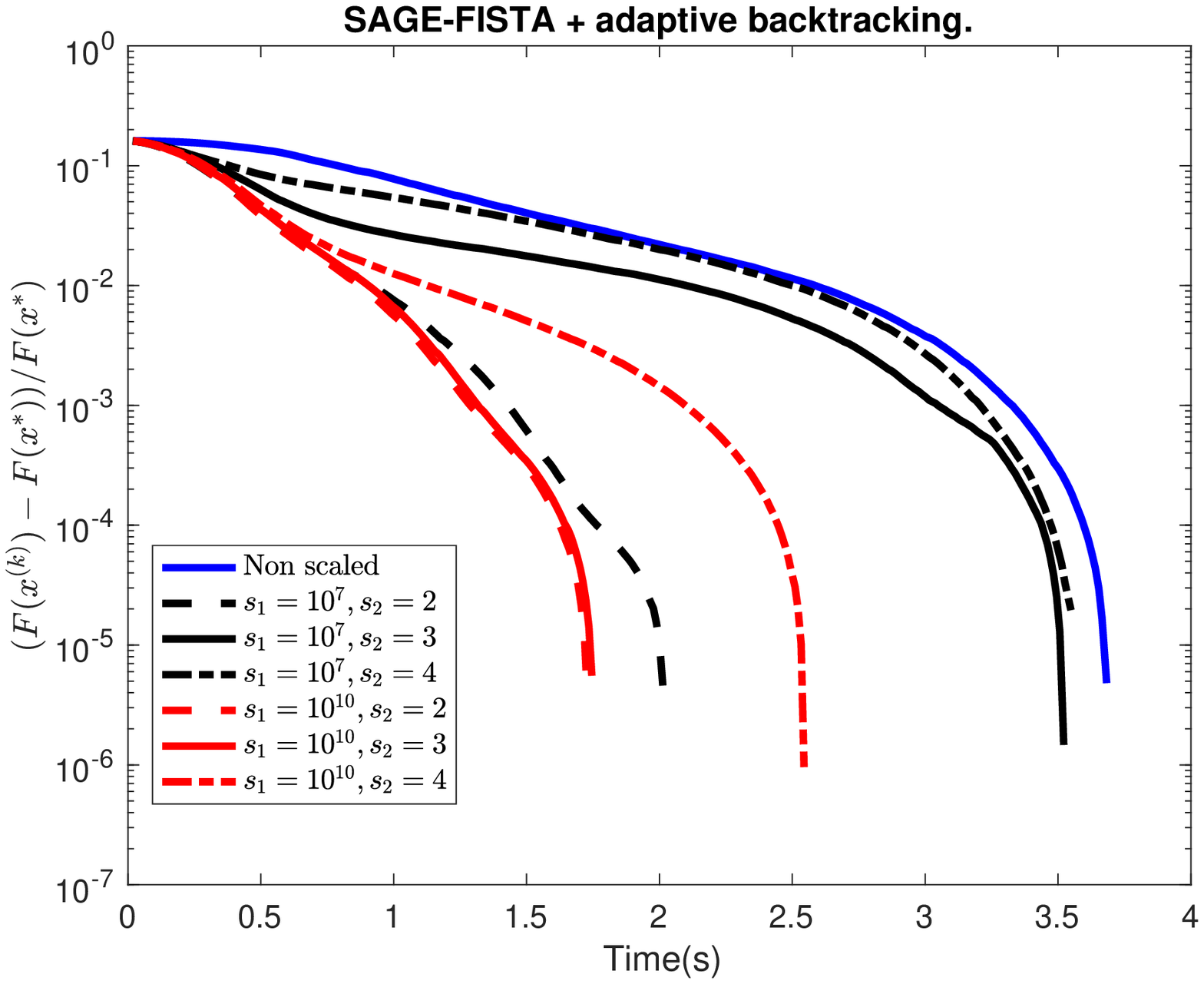} & \includegraphics[height=3.9cm]{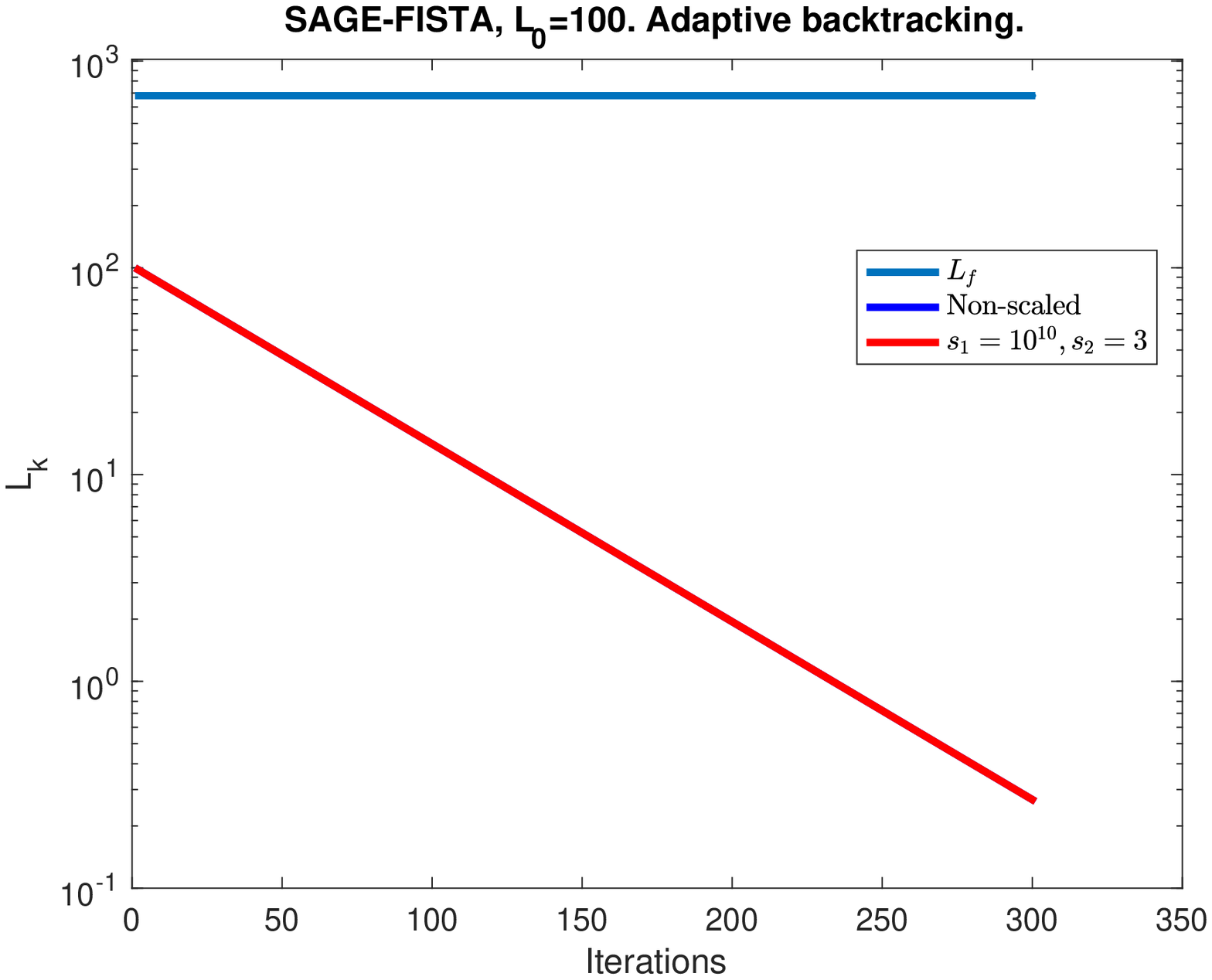}\\
     (a) Relative rates VS it. & (b) Relative rates VS first 30 secs. & (c) $L_k$ estimates.  
\end{tabular}
\caption{Performance of SAGE-FISTA with Armijo backtracking (top row) and adaptive-backtracking (bottom row) for different choices of $s_1$ and $s_2$ for problem \eqref{eq:pb_KL_TVsc} on \texttt{mri} image.}
    \label{fig:mri_GFISTA_armijo}
\end{figure}

\section{Conclusions and outlook}

We studied a Scaled, inexact and Adaptive GEneralized FISTA (named SAGE-FISTA) algorithm for solving (strongly) convex composite optimization problems via inertial forward-backward splitting. In order to take into account possible errors in the computation of the proximal-gradient points, an appropriate notion of inexactness is introduced and combined with the use of suitable variable-metric operators. To avoid the limitations due to the use of monotone Armijo-type backtracking strategies, we considered an adaptive backtracking strategy that allows for local increasing/decreasing of the algorithmic step-size. A linear convergence result in function values is given, and numerical experiments on some exemplar Poisson image restoration problems are reported. The combination of the scaling procedure with the adaptive backtracking strategy allows to compute much more accurate solutions in shorter computational times, which is of particular interest in real-world scenarios where limited computational resources are available. 
%Furthermore, it significantly reduces the computational bottlenecks due to pessimistic under-estimation of the step-size.
%whose proof relies on the careful estimation of a decay factor depending on the strong-convexity moduli, the scaling factors and the backtracking parameters

Possible generalizations of this work shall consider the use of Bregman distances in the computation of the proximity operator, in the same spirit of  \cite{Bauschke-Bolte-Teboulle-2016}. For the estimation of the strong convexity moduli,  the use of restarting strategies similar to the ones proposed in \cite{ODonoghue2015,FercoqQu2020} should be also explored. Finally, in order to relax the hypothesis on strong convexity, the study of SAGE-FISTA under the proximal Polyak-\L{}ojasiewicz condition \cite{Karimi2016} is envisaged.

\appendix

\section{Computation of perturbed scaled projections} \label{appendix:1}

We report a useful Lemma employed in Section \ref{sec:Pdeb} to define appropriately the primal sequence $\left\{x^{(k,l)}\right\}_{l\in \mathbb{N}}$ in \eqref{eq:primal_sequence_PDEB1} for problem \eqref{eq:pb_KL_TVsc}. We prove the result under the assumption that, at each iteration $k\geq 1 $, the scaling matrix $D_k$ is diagonal and positive definite. Whenever $D_k = \mathcal{I}$, the identity matrix, the result is a standard property of proximal operators, see, e.g., \cite{Combettes-Pesquet-2009}.

\begin{Lemma}  \label{lemma:rescaling}
Let $h:\mathbb{R}^n\to\mathbb{R}\cup \left\{\infty\right\}$ be a proper, convex and lower semicontinuous function, $D\in\mathbb{R}^{n\times n}$ a diagonal positive definite matrix and let $ \varepsilon>0$. Then, defining the function $g: \mathbb{R}^n\to\mathbb{R}\cup \left\{\infty\right\}$ as $g(x):= h(x) + \frac{\varepsilon}{2}\|x\|_2^2$, there holds:
\begin{equation}
    \prox^D_{\tau g}(z) = \prox_{\tau h}^{D+\tau\varepsilon\mathcal{I}} \left( \left(\frac{D}{D+\tau\varepsilon\mathcal{I}}\right) z\right),\qquad \forall z\in\mathbb{R}^n,\quad \tau>0,
\end{equation}
where $\mathcal{I}\in\mathbb{R}^{n\times n}$ is the identity matrix and the fraction symbol is intended element-wise.
\end{Lemma}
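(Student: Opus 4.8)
The plan is to start from the variational definition of the scaled proximal operator and complete the square in a suitably weighted norm. Writing out the definition, for a fixed $z\in\mathbb{R}^n$ and $\tau>0$ we have
\begin{equation*}
\prox^D_{\tau g}(z)=\argmin_{x\in\mathbb{R}^n}\ \tau h(x)+\frac{\tau\varepsilon}{2}\|x\|_2^2+\frac{1}{2}\langle D(x-z),x-z\rangle,
\end{equation*}
and this minimizer exists and is unique because the objective is proper, lsc and strongly convex. First I would collect the purely quadratic-in-$x$ terms: since $\frac{\tau\varepsilon}{2}\|x\|_2^2=\frac12\langle\tau\varepsilon\mathcal{I}x,x\rangle$, the sum of $\frac{\tau\varepsilon}{2}\|x\|_2^2$ and $\frac12\langle D(x-z),x-z\rangle$ equals $\frac12\langle (D+\tau\varepsilon\mathcal{I})x,x\rangle-\langle Dx,z\rangle+\frac12\langle Dz,z\rangle$, where I have used that $D$ and $\tau\varepsilon\mathcal{I}$ are self-adjoint. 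Note that $D+\tau\varepsilon\mathcal{I}$ is diagonal and positive definite, hence invertible, so the $(D+\tau\varepsilon\mathcal{I})$-norm is well defined.

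The key step is to complete the square with respect to this metric. Setting $\tilde z:=(D+\tau\varepsilon\mathcal{I})^{-1}Dz$, a direct expansion gives
\begin{equation*}
\frac12\|x-\tilde z\|_{D+\tau\varepsilon\mathcal{I}}^2=\frac12\langle(D+\tau\varepsilon\mathcal{I})x,x\rangle-\langle(D+\tau\varepsilon\mathcal{I})\tilde z,x\rangle+\frac12\langle(D+\tau\varepsilon\mathcal{I})\tilde z,\tilde z\rangle,
\end{equation*}
and since $(D+\tau\varepsilon\mathcal{I})\tilde z=Dz$ the middle term is exactly $\langle Dz,x\rangle$, matching the cross term above. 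Therefore the quadratic part of the objective equals $\frac12\|x-\tilde z\|_{D+\tau\varepsilon\mathcal{I}}^2$ plus a constant independent of $x$, namely $\frac12\langle Dz,z\rangle-\frac12\langle Dz,\tilde z\rangle$. Discarding this constant, the minimization problem becomes
\begin{equation*}
\prox^D_{\tau g}(z)=\argmin_{x\in\mathbb{R}^n}\ \tau h(x)+\frac12\|x-\tilde z\|_{D+\tau\varepsilon\mathcal{I}}^2=\prox_{\tau h}^{D+\tau\varepsilon\mathcal{I}}(\tilde z).
\end{equation*}

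Finally, since both $D$ and $\tau\varepsilon\mathcal{I}$ are diagonal, the matrix $(D+\tau\varepsilon\mathcal{I})^{-1}D$ is diagonal with entries $D_{ii}/(D_{ii}+\tau\varepsilon)$, which is precisely what is meant by the element-wise fraction $D/(D+\tau\varepsilon\mathcal{I})$; hence $\tilde z=\big(D/(D+\tau\varepsilon\mathcal{I})\big)z$ and the claimed identity follows. I do not anticipate a genuine obstacle here: the only thing requiring care is the bookkeeping of the weighted inner products when completing the square, in particular verifying that the cross term $\langle Dz,x\rangle$ is reproduced exactly by the $(D+\tau\varepsilon\mathcal{I})$-norm of $x-\tilde z$, which is what forces the specific choice $\tilde z=(D+\tau\varepsilon\mathcal{I})^{-1}Dz$. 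Diagonality of $D$ is used only in the last step to rewrite $(D+\tau\varepsilon\mathcal{I})^{-1}D$ as an element-wise ratio; the square-completion argument itself works for any self-adjoint positive definite $D$.
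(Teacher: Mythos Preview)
Your proof is correct and follows essentially the same approach as the paper: both expand the quadratic part of the objective and complete the square in the $(D+\tau\varepsilon\mathcal{I})$-norm to identify the argmin as a scaled proximal operator of $\tau h$ at the rescaled point. The only cosmetic difference is that the paper carries out the computation coordinate-wise from the start (exploiting diagonality of $D$ throughout), whereas you work in matrix/operator form and invoke diagonality only at the end to rewrite $(D+\tau\varepsilon\mathcal{I})^{-1}D$ as the element-wise ratio; as you correctly observe, this makes your argument valid for any symmetric positive definite $D$.
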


\begin{proof}
We have the following chain of equalities holding for all $z\in\mathbb{R}^n$:
\begin{align*}
    \prox^D_{\tau g}(z) &  =\argmin_{y\in\mathbb{R}^n}~ h(y) + \frac{\varepsilon}{2}\|y\|_2^2 + \frac{1}{2\tau}\|y-z\|_D^2 \\
    %= \argmin_{y\in\mathbb{R}^n}~ g(y) + \frac{1}{2\tau}\|y-z\|_D^2 \\ 
    %& =  \argmin_{y\in\mathbb{R}^n}~ h(y) + \sum_{i=1}^n \left( \frac{\varepsilon}{2}y_i^2 + \frac{D_{i,i}}{2\tau}(y_i-z_i)^2 \right) \\
    & =\argmin_{y\in\mathbb{R}^n}~ h(y) + \sum_{i=1}^n \left(\frac{D_{i,i}+\tau\varepsilon}{2\tau}y_i^2 + \frac{D_{i,i}}{2\tau} z_i^2- \frac{D_{i,i}}{\tau}y_i z_i\right) \\
    & =\argmin_{y\in\mathbb{R}^n}~ h(y) + \sum_{i=1}^n \left(   \frac{D_{i,i}+\tau\varepsilon}{2\tau}y_i^2 + \left(\frac{D_{i,i}^2}{2\tau(D_{i,i}+\tau\varepsilon)} + \frac{\varepsilon D_{i,i}}{2(D_{i,i}+\tau\varepsilon)} \right)z_i^2 - \frac{D_{i,i}}{\tau}y_i z_i \right).
\end{align*}
By neglecting the constant term $(\varepsilon D_{i,i}z_i^2)/(2(D_{i,i}+\tau\varepsilon))$, we obtain:
\begin{align*}
\prox^D_{\tau g}(z) & = \argmin_{y\in\mathbb{R}^n}~ h(y) + \sum_{i=1}^n  \frac{D_{i,i}+\tau\varepsilon}{2\tau} \left(y_i^2+\left(\frac{D_{i,i}z_i}{D_{i,i}+\tau\varepsilon}\right)^2 -2 y_i \left(\frac{D_{i,i}z_i}{D_{i,i}+\tau\varepsilon}\right)\right) \\
    & = \argmin_{y\in\mathbb{R}^n}~ h(y) + \frac{1}{2\tau}\left\| y - \left(\frac{D}{D+\tau\varepsilon\mathcal{I}}\right)z\right\|_{D+\tau\varepsilon\mathcal{I}}^2 = \prox_{\tau h}^{D+\tau\varepsilon\mathcal{I}} \left( \left(\frac{D}{D+\tau\varepsilon\mathcal{I}}\right) z\right).
\end{align*}
\end{proof}

\bibliography{biblio_Silvia}{}
\bibliographystyle{plain}
\end{document}